\def\letitre{Deformations of nearly K\"ahler instantons}   %
\title{\letitre}
\author{Benoit Charbonneau and Derek Harland}
\newcommand{\addressBenoit}{Department of Pure Mathematics, 
				University of Waterloo, 
				200 University Avenue West, Waterloo, Ontario N2L 3G1, Canada.
				and
				Perimeter Institute for Theoretical Physics, Waterloo, Ontario N2L 2Y5, Canada.}    
\newcommand{\emailBenoit}{benoit@alum.mit.edu}          
\newcommand{\addressDerek}{School of Mathematics, University of Leeds, LS2 9JT, UK.}
\newcommand{\emailDerek}{d.g.harland@leeds.ac.uk}
\newcommand{\RR}{{\mathbb{R}}}
\newcommand{\CC}{{\mathbb{C}}}
\newcommand{\C}{\CC}
\newcommand{\HH}{{\mathbb{H}}}
\renewcommand{\gg}{\mathfrak{g}}
\newcommand{\g}{\gg}
\newcommand{\hh}{\mathfrak{h}}
\newcommand{\mm}{\mathfrak{m}}
\newcommand{\kk}{\mathfrak{k}}
\newcommand{\su}{\mathfrak{su}}
\newcommand{\so}{\mathfrak{so}}
\newcommand{\sll}{\mathfrak{sl}}
\newcommand{\uu}{\mathfrak{u}}
\renewcommand{\sp}{\mathfrak{sp}}
\newcommand{\SU}{\mathrm{SU}}
\newcommand{\U}{\mathrm{U}}
\newcommand{\Sp}{\mathrm{Sp}}
\newcommand{\Tr}{\mathrm{Tr}}
\newcommand{\tr}{\Tr}
\newcommand{\End}{\mathrm{End}}
\newcommand{\Sym}{\mathrm{Sym}}
\newcommand{\Cl}{\mathrm{Cl}}
\newcommand{\vect}[1]{\begin{bmatrix}#1\end{bmatrix}}
\newcommand{\diag}{\mathrm{diag}}
\newcommand{\Span}{\mathrm{span}}
\newcommand{\BB}{\mathcal{B}}
\newcommand{\Ad}{\mathrm{Ad}}
\newcommand{\ad}{\mathrm{ad}}
\newcommand{\Cas}{\mathrm{Cas}}
\newcommand{\Vol}{\mathrm{Vol}_g}
\newcommand{\Scal}{\mathrm{Scal}}
\newcommand{\Ric}{\mathrm{Ric}}
\newcommand{\PPP}{\mathcal{P}}
\newcommand{\dd}{{\rm d}}
\newcommand{\ii}{\mathrm{i}}
\newcommand{\jj}{\mathrm{j}}
\newcommand{\kkk}{\mathrm{k}}
\newcommand{\sfrac}[2]{{\textstyle\frac{#1}{#2}}}
\theoremstyle{plain}
\newtheorem{theorem}{Theorem}
\newtheorem{lemma}{Lemma}
\newtheorem{proposition}{Proposition}
\newtheorem{corollary}{Corollary}
\theoremstyle{definition}
\begin{document}

\date{8 June 2016.}
\maketitle
\begin{abstract}
	We formulate the deformation theory for instantons on nearly K\"ahler six-manifolds using spinors and Dirac operators.  Using this framework we identify the space of deformations of an irreducible instanton with semisimple structure group with the kernel of an elliptic operator, and prove that abelian instantons are rigid.  As an application, we show that the canonical connection on three of the four homogeneous nearly K\"ahler six-manifolds $G/H$ is a rigid instanton with structure group $H$.  In contrast, these connections admit large spaces of deformations when regarded as instantons on the tangent bundle with structure group $\SU(3)$.
\end{abstract}
\footnotetext{The authors can be reached respectively at [\emailBenoit\ \addressBenoit], [\emailDerek\ \addressDerek]}

\section{Introduction}

Instantons are connections whose curvature solves a certain linear algebraic equation.  Although instantons were first introduced in dimension four, the study of instantons on manifolds of dimension greater than four has a long history
\cite{Acharya1997,Baraglia-Hekmati,Baulieu1998,Capria-Salamon,Cherkis-octonions-monopoles-knots,Correia2009,Dunajski-Hoegner,CDFN83,DS09,DT98,Gunaydin:1995ku,Haupt2011,Haydys,JMPS,Kanno1999-singularities,Kanno-Yasui,Lewis-Thesis,MenetNordstromSaEarp,Munoz-Spin7,Popov2010,PopovSzabo,ReyesCarrion98,SaEarpWalpuski-G2instantons,SaEarp2014-CS,SaEarp-G2overACM,tanaka-spaceDTKahler,Tanaka2014-removal,tanaka-weakcompactness,Tanaka-Spin7,Tian00,Tao-Tian,Walpuski-G2-Kummer,Ward84} (not to mention the long and fruitful study of Hermitian--Yang--Mills connections). 
 In favourable circumstances instantons solve the Yang--Mills equation; for this reason and others the study of instantons informs string theory, supergravity, and theoretical physics.  It is hoped by many that analysing instantons will lead to the construction of invariants of seven-dimensional $G_2$-manifolds, just as was the case for four-dimensional manifolds.

Nearly K\"ahler manifolds were first studied by Wolf and Gray \cite{Gray70,Wolf67,WolfGray1,WolfGray2}.  The lowest dimension in which non-trivial nearly K\"ahler manifolds exist is six, and in this dimension nearly K\"ahler manifolds admit Killing spinors.  Dimension six is also relevant to the theory of special holonomy, as the cone over any nearly K\"ahler six-manifold has holonomy contained in $G_2$ \cite{Bryant87}.  There are precisely four homogeneous nearly K\"ahler six-manifolds \cite{Butruille-vf} (see \cite{Butruille} for an English version).  Until recently these were the only known complete examples, but within the last year new complete examples have been constructed by taking quotients of the homogeneous examples by freely acting discrete groups \cite{CV15} and by analysing the ordinary differential equations that describe nearly K\"ahler metrics with cohomogeneity one \cite{HF15}.

Nearly K\"ahler six-manifolds are a natural arena in which to study instantons.  Instantons on nearly K\"ahler six-manifolds are Yang--Mills \cite[Prop 2.10]{Xu09}, and the tangent bundle over any nearly K\"ahler six-manifold admits an instanton~\cite{HN12}, which is known as the canonical connection and characterised by having skew-symmetric torsion and holonomy contained in $\SU(3)$.

There are two ways in which the study of instantons on nearly K\"ahler six-manifolds informs their study on seven-manifolds with holonomy contained in $G_2$.  The first is through the Bryant--Salamon manifolds \cite{BS89,GPP90}: these are complete non-compact $G_2$-manifolds that asymptote to cones over the homogeneous nearly K\"ahler six-manifolds.  Non-trivial instantons have been constructed on these \cite{Clarke14,Oliveira14} and on the cone over the nearly K\"ahler six-sphere \cite{FN84,FN85}; in all cases the seven-dimensional instanton asymptotes to a non-trivial instanton on the nearly K\"ahler six-manifold.  Thus studying instantons on the Bryant--Salamon manifolds seems to entail studying instantons on nearly K\"ahler six-manifolds.

The second link to $G_2$-geometry is through ``bubbling''.  The instantons on $\RR^7$ constructed in \cite{FN84,FN85} form a one-parameter family.  The parameter describes the size of the instantons and is related to the conformal symmetry of the instanton equations.  At one end of the family the energy density of the instantons spreads out and the instanton converges to a flat connection.  At the other end the energy density becomes concentrated and the instanton converges to a singular connection on $\RR^7\setminus\{0\}$.  The latter is the pull-back of an instanton on $S^6$ (in fact, of the canonical connection).  This example suggests that instantons on $G_2$-manifolds could form point-like singularities whilst maintaining finite energy; such a process would be consistent with the results of Tao and Tian \cite{Tian00,Tao-Tian}.

This paper concerns the deformation theory for instantons on nearly K\"ahler six-manifolds.  We show that the space of solutions to the linearisation of the instanton equations about a given instanton can be identified with a subspace of the kernel of a Dirac operator (in fact, under mild assumptions it is identified with the whole of the kernel).  The Dirac operator has index zero, so one expects instantons to be rigid and their moduli spaces to consist of isolated points.  We confirm this expectation in a number of examples, including those of abelian instantons and of the canonical connection on the six-sphere.  We similarly analyse the allowed perturbations of the canonical connection on the remaining three homogeneous nearly K\"ahler six-manifolds.  In some cases we find non-zero spaces of solutions to the linearised instanton equations, so the construction of new instantons by perturbing known examples remains a tantalising possibility.

In Section \ref{sec:geomNK}, we review the geometry of nearly K\"ahler six-manifolds from a spinorial point of view.  In Section \ref{sec:instdef}, we introduce the deformation theory for instantons on nearly K\"ahler six-manifolds.  In Section \ref{sec:insthomNK} and Section \ref{sec:frobenius}, we apply this theory to investigate perturbations of some homogeneous examples of instantons.  We note that a proof of the rigidity of the canonical connection on $S^6$ was previously claimed in \cite[Thm 3.5]{Xu09}.  The proof given in that paper was unfortunately incorrect, as explained in Section \ref{sec:insthomNK}, but our analysis in Section \ref{sec:frobenius} confirms that this instanton is indeed rigid.
The paper closes with a few appendices in which technical details are provided.

\section{Geometry of nearly K\"ahler manifolds}
\label{sec:geomNK}

Let $M$ be a six-dimensional Riemannian manifold and let $\nabla^{LC}$ denote the Levi-Civita connection on $M$.  The manifold $M$ is called \emph{nearly K\"ahler} if there is a real non-zero constant $\lambda$ and a non-zero section $\psi$ of the real spinor bundle such that
\begin{equation}
\label{KS}
 \nabla^{LC}_X\psi = \lambda X\cdot\psi\quad \forall X\in\Gamma(TM).
\end{equation}
The section $\psi$ is called a \emph{real Killing spinor}.  Any nearly K\"ahler manifold admits at least two Killing spinors, since the section $\Vol\cdot\psi$ satisfies
\[ \nabla^{LC}_X \Vol\cdot\psi = - \lambda X\cdot\Vol\cdot\psi\quad \forall X\in\Gamma(TM). \]
By rescaling the metric and if necessary replacing $\psi$ with $\Vol\cdot\psi$ the sign and magnitude of $\lambda$ can be altered to any given value; therefore for simplicity this document uses a convention in which
\[ \lambda = \frac12. \]

Any six-dimensional nearly K\"ahler manifold is automatically Einstein \cite{BFGK91}, with Ricci curvature $\Ric = 5 g$.  Moreover, a six-dimensional nearly K\"ahler manifold admits an SU(3) structure, essentially because the stabiliser of any real spinor in six dimensions is isomorphic to SU(3).  The SU(3)-structure is characterised by an almost complex structure, a K\"ahler form, and a holomorphic volume form.  All of these may be constructed directly from the Killing spinor, as explained below.

Let $(V,g)$ denote a real six-dimensional vector space $V$ equipped with a positive definite metric $g$.  Recall that $\Cl(V,g)\cong \Lambda^*V$ as vector spaces.   Many algebraic expressions are very easy to prove.  We record a few here in a lemma for they are used often in the course of this paper.

\begin{lemma} 
	\label{lemma:algebra}
Let $\alpha\in \Lambda^1V$ and $\beta\in \Lambda^pV$.  Then in $\Cl(V,g)$,
\begin{align}
	[\alpha,\beta]&=\begin{cases} 2\alpha\wedge \beta,&\text{ if $p$ is odd,}\\
								-2\alpha\lrcorner \beta,&\text{ if $p$ is even,}
					\end{cases}\\
	\{\alpha,\beta\}&=\begin{cases} -2\alpha\lrcorner \beta,&\text{ if $p$ is odd,}\\
								2\alpha\wedge \beta,&\text{ if $p$ is even.}
					\end{cases}
\end{align}
\end{lemma}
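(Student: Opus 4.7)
The plan is to reduce to a basis computation. Fix an orthonormal basis $e_1,\ldots,e_6$ of $V$; products of distinct basis vectors in $\Cl(V,g)$ coincide with the corresponding wedge products in $\Lambda^*V$ under the standard vector space isomorphism, so by bilinearity of the commutator and anticommutator in both arguments, it suffices to verify the four identities when $\alpha = e_j$ and $\beta = e_{i_1}\cdots e_{i_p}$ with distinct indices $i_1 < \cdots < i_p$.

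I would then split into two cases according to whether $j$ appears among $i_1,\ldots,i_p$. If $j\notin\{i_1,\ldots,i_p\}$, the Clifford relation $e_je_{i_l} = -e_{i_l}e_j$ shows that $\alpha$ anticommutes with every factor of $\beta$, so $\alpha\beta = (-1)^p\beta\alpha$; since in this subcase $\alpha\lrcorner\beta = 0$ and $\alpha\beta = \alpha\wedge\beta$, all four identities follow by inspection according to the parity of $p$. If instead $j = i_k$ for some $k$, then $\alpha\wedge\beta = 0$, and moving $e_{i_k}$ into its place from either end of $\beta$ and applying $e_{i_k}^2 = -1$ yields explicit formulas $\alpha\beta = -\alpha\lrcorner\beta$ and $\beta\alpha = (-1)^p\alpha\lrcorner\beta$; the four identities then drop out on adding and subtracting these two expressions.

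The proof has no conceptual obstacle; the work is purely bookkeeping of signs produced by Clifford anticommutation and by the interior product, and it is the $(-1)^p$ factor that appears when $\alpha$ is commuted past the $p$ factors of $\beta$ that produces the parity-dependent swap between $\wedge$ and $\lrcorner$ in the commutator versus anticommutator formulas. The one point to watch is the sign convention: the statement of the lemma itself fixes it, since the subcase $\beta\in\Lambda^1V$ with $p=1$ forces $\alpha\beta + \beta\alpha = -2g(\alpha,\beta)$, i.e.\ $v\cdot v = -g(v,v)$, and this convention must then be tracked consistently through the basis computation.
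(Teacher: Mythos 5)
Your proof is correct: the reduction to a basis computation is the standard argument, the two cases ($j$ absent from $\beta$ versus $j=i_k$) are treated properly, and the sign bookkeeping with $e_j e_{i_l}=-e_{i_l}e_j$, $e_j^2=-1$, and the $(-1)^{k-1}$ in the interior product all comes out consistently with the convention $v\cdot v=-g(v,v)$ that the lemma itself forces. The paper states this lemma without proof, treating it as an elementary Clifford algebra identity, so there is no authorial argument to compare against; your write-up supplies exactly the routine verification the authors implicitly invoke.
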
 

Recall that the space $S$ of spinors for $(V,g)$ is a real eight-dimensional vector space equipped with a positive-definite symmetric bilinear form $(\cdot,\cdot)$ which carries a representation of the Clifford algebra $\Cl(V,g)$.  Let $\psi\in S$ be any spinor of unit length, and let $\psi^T\in S^\ast$ be the conjugate of $\psi$ with respect to the symmetric bilinear form.  Then $\psi\otimes\psi^T$ is a self-adjoint element of $S\otimes S^\ast\cong \Cl(V,g)$.

The self-adjoint subspace of $\Cl(V,g)$ is identified under the canonical isomorphism $\Cl(V,g)\cong \Lambda^\ast V$ with the subspace $\Lambda^0\oplus \Lambda^3\oplus\Lambda^4$.  Therefore $\psi\otimes\psi^T$ defines unique forms of degrees zero, three, and four.  The zero-form is equal to $\sfrac18$, because $\sfrac18\Tr_S (\psi\otimes\psi^T) = \sfrac18$.  Thus $\psi$ uniquely determines $P\in\Lambda^3 V$ and $Q\in\Lambda^4V$ through the equation
\begin{equation}
\label{def PQ}
8\psi\otimes\psi^T = 1 + P - Q.
\end{equation}
Let us note here for future reference that the stabiliser subgroup in $\mathrm{Spin}(V,g)$ of $\psi$ is isomorphic to SU(3), and that the corresponding subgroup of $\mathrm{SO}(V,g)$ is again isomorphic to SU(3).

The spinor $\psi$ defines a linear map $\phi\mapsto\phi\cdot\psi$ from $\Lambda^0\oplus\Lambda^1\oplus\Lambda^6$ to $S$.  This map is easily seen to be an isometry with respect to the metrics $g$ and $(\cdot,\cdot)$ so must be injective.  Since its domain and target have equal dimension it is an isomorphism:
\begin{equation}\label{spinor isomorphism} S \cong \Lambda^0 V \oplus \Lambda^1 V\oplus\Lambda^6 V. \end{equation}

\begin{lemma} \label{evals PQ}
 The subspaces of $S$ isomorphic to $\Lambda^0$, $\Lambda^6$ and $\Lambda^1$ are eigenspaces of the operations of Clifford multiplication by $P$ and $Q$.  The associated eigenvalues are 
 \begin{center}\begin{tabular}{c|ccc}
  & $\Lambda^0$ & $\Lambda^1$ & $\Lambda^6$ \\
  \hline
  P & 4 & 0& -4  \\
  Q & -3 &1& -3. 
 \end{tabular}\end{center}
\end{lemma}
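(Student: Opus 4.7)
The plan is to exploit the explicit identity $8\psi\otimes\psi^T = 1 + P - Q$ by applying it to a small family of test spinors, namely $\psi$, $\Vol\cdot\psi$, and $v\cdot\psi$ for $v\in V$, supplemented by elementary parity/adjointness properties of the volume form and a short representation-theoretic argument.

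First, I would apply the identity to $\psi$. Since $(\psi,\psi)=1$ we have $(\psi\otimes\psi^T)\psi=\psi$, so the identity gives the single relation $P\cdot\psi - Q\cdot\psi = 7\psi$. To separate the contributions of $P$ and $Q$ I would apply the same identity to $\Vol\cdot\psi$. Two elementary Clifford facts do the work: the volume form anticommutes with odd elements and commutes with even ones, so $P\cdot\Vol = -\Vol\cdot P$ and $Q\cdot\Vol = \Vol\cdot Q$; and under the convention $v\cdot v = -|v|^2$ the operator $\Vol$ is anti-self-adjoint on $S$, so $(\psi,\Vol\cdot\psi)=0$. The right-hand side therefore becomes $\Vol\cdot(\psi - P\cdot\psi - Q\cdot\psi)$, and invertibility of $\Vol$ (since $\Vol^2 = -1$) yields the complementary relation $P\cdot\psi + Q\cdot\psi = \psi$. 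Solving the two linear equations produces $P\cdot\psi = 4\psi$ and $Q\cdot\psi = -3\psi$, and one more application of $\Vol$ together with the same commutation rules gives the eigenvalues $-4$ and $-3$ on $\Lambda^6$.

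For the action on $\Lambda^1$, I would appeal to representation theory. The operators $P$ and $Q$ are built algebraically from $\psi$, so they commute with the $\SU(3)$ stabilizer of $\psi$; and $\Lambda^1\cong V$ is irreducible as a real $\SU(3)$-representation, being the underlying real of the defining complex representation $\CC^3$ (which is not isomorphic to its complex conjugate). Since $P$ and $Q$ are self-adjoint they diagonalize over $\RR$, and by equivariance each eigenspace is $\SU(3)$-invariant, hence must be all of $\Lambda^1$; so $P$ and $Q$ act by real scalars $p$ and $q$ on $\Lambda^1$. To pin these down I would compute $\tr_S P$ and $\tr_S Q$ in two ways: on one hand any homogeneous Clifford element of positive degree has vanishing trace on $S$ (by the cyclic property of trace combined with conjugation by an appropriately chosen basis vector), so $\tr_S P = \tr_S Q = 0$; on the other hand summing the eigenvalues already found over $\Lambda^0 \oplus \Lambda^1 \oplus \Lambda^6$ gives $\tr_S P = 4 - 4 + 6p = 6p$ and $\tr_S Q = -3 - 3 + 6q = 6q - 6$. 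These force $p = 0$ and $q = 1$.

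The main obstacle is just sign bookkeeping in the Clifford conventions: both the anti-self-adjointness of $\Vol$ and the parity rule $P\cdot\Vol = -\Vol\cdot P$ rely on the reversal identity $(v_1\cdots v_p)^T = (-1)^{p(p+1)/2}v_1\cdots v_p$ that follows from $v\cdot v = -|v|^2$. Once those signs are fixed, the rest of the argument is entirely routine.
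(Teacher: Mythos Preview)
Your proof is correct and follows essentially the same approach as the paper's: both use the defining identity $8\psi\otimes\psi^T = 1+P-Q$, the parity/commutation relations with $\Vol$, self-adjointness, tracelessness, and $\SU(3)$-equivariance on $\Lambda^1$. The only difference is organizational---you apply the identity directly to $\psi$ and $\Vol\cdot\psi$ to read off the eigenvalues on $\Lambda^0\oplus\Lambda^6$, whereas the paper first parametrizes the possible matrix forms of $P,Q$ and then solves; the content is the same.
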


\begin{proof}
 The operations of multiplication by $P$ and $Q$ are SU(3)-equivariant, since $P$ and $Q$ are constructed from $\psi$ and $\psi$ is SU(3)-invariant.  Therefore the subspaces of $S$ isomorphic to $\Lambda^0\oplus \Lambda^6$ and $\Lambda^1$ are fixed by $P$ and $Q$.  The subspace $\Lambda^1$ forms an irreducible representation of SU(3), so $P$ and $Q$ act by scalar multiplication on this subspace.

 Since the action of $Q$ is self-adjoint and commutes with the action of $\Vol$ on $\Lambda^0\oplus\Lambda^6$, $Q$ must act as multiplication by some real constant $q_1$ on this space.  Since the action of $P$ is self-adjoint and anti-commutes with the action of $\Vol$, there must exist constants $p_1$ and $p_2$ such that $P\cdot\psi = p_1\psi+p_2\Vol\cdot\psi$ and $P\cdot\Vol\cdot\psi = p_2\psi-p_1\Vol\cdot\psi$.  Given additionally that the actions of $P$ and $Q$ are both traceless, they must take the following form with respect to the decomposition $S\cong \Lambda^0\oplus\Lambda^6\oplus\Lambda^1$:
 \[
  P = \left(\begin{array}{cc|c} p_1 & p_2 & 0 \\ p_2 & -p_1 & 0 \\ \hline 0 & 0 & 0 \end{array}\right),\quad
  Q = \left(\begin{array}{cc|c} q_1 & 0 & 0 \\ 0 & q_1 & 0 \\ \hline 0 & 0 & -q_1/3 \end{array}\right).
 \]
 Equation \eqref{def PQ} then implies that $1+p_1-q_1=8$, $1-p_1-q_1=0$, $p_2=0$ and $1+q_1/3=0$.  The unique solution of this system of equations is $p_1=4$, $p_2=0$ and $q_1=-3$, giving the advertised result.
\end{proof}

A complex structure may be defined on $V$ using the isomorphism given in Equation~\eqref{spinor isomorphism}.  If $u\in V$ then $\Vol\cdot u\cdot \psi$ belongs to the subspace $\Lambda^1 V\subset S$; therefore we may define $Ju$ through the equation
\[ J u\cdot \psi = \Vol\cdot u\cdot\psi. \]

Having identified a complex structure one may define a K\"ahler form $\omega$ in the usual way and a unique (up to normalisation) complex 3-form $\Omega$ of type (3,0).
Although these are not needed in what follows we pause to explain how these are related to the forms $P$ and $Q$.  With suitable normalisation of $\Omega$, it holds that
\begin{equation}\label{omegas} \omega = \ast Q\qquad\mbox{and that}\qquad \Omega=P+\ii\ast P. \end{equation}
Proofs of these equations are supplied in Appendix \ref{sec:omegaOmega}.  Let us remark here that Lemma \eqref{evals PQ} implies that $\|P\|^2 = \sfrac18\Tr_S(P^2) = 4$.  This relation implies that $\Omega$ is given a standard normalisation in which
\[ \Omega = (e^1+\ii e^2)\wedge(e^3+\ii e^4)\wedge(e^5+\ii e^6) \]
in some orthonormal basis $e^1,\ldots,e^6$.

If $\psi$ solves the real Killing spinor Equation~\eqref{KS} then its length is constant.  Therefore any six-dimensional nearly K\"ahler manifold admits non-vanishing forms $P$ and $Q$ defined as above.  Since $\psi$ is non-vanishing it defines an SU(3)-structure on $M$, such that $\psi$ and the forms $P$ and $Q$ are parallel with respect to any connection with holonomy group contained in SU(3).

\begin{lemma}\label{derivatives PQ}
 The differential forms $P$ and $Q$ satisfy the differential identities,
 \[ \dd P = 4 Q,\quad \dd \ast Q=3\ast P. \]
\end{lemma}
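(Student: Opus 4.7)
The plan is to extract $\nabla^{LC} P$ and $\nabla^{LC} Q$ by differentiating the defining identity $8\psi\otimes\psi^T=1+P-Q$ with the Killing spinor equation~\eqref{KS}, and then pass to exterior derivatives via $d\omega=\sum_i e^i\wedge\nabla^{LC}_{e_i}\omega$.

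First I would apply the Leibniz rule together with~\eqref{KS} to obtain
\[ \nabla^{LC}_X(\psi\otimes\psi^T) = \tfrac12(X\cdot\psi)\otimes\psi^T + \tfrac12\psi\otimes(X\cdot\psi)^T. \]
Under the isomorphism $S\otimes S^\ast\cong\Cl(V,g)$, the first term corresponds to left Clifford multiplication of $\psi\otimes\psi^T$ by $X$. For the second, the identification of the self-adjoint subspace with $\Lambda^0\oplus\Lambda^3\oplus\Lambda^4$ (noted just before~\eqref{def PQ}) forces vectors to be skew-adjoint in the spinor pairing, so $(X\cdot\psi)^T=-\psi^T\cdot X$ and this term equals $-(\psi\otimes\psi^T)\cdot X$. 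Multiplying by $8$ gives $\nabla^{LC}_X(P-Q)=\tfrac12[X,P-Q]$. Lemma~\ref{lemma:algebra} then yields $[X,P]=2X\wedge P\in\Lambda^4$ and $[X,Q]=-2X\lrcorner Q\in\Lambda^3$; matching form degrees produces the key intermediate identities
\[ \nabla^{LC}_X P = X\lrcorner Q, \qquad \nabla^{LC}_X Q = -X\wedge P. \]

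Finally I would pass to exterior derivatives. The contraction-wedge identity $\sum_i e^i\wedge(e_i\lrcorner\omega)=(\deg\omega)\,\omega$ immediately gives $dP=\sum_i e^i\wedge(e_i\lrcorner Q)=4Q$. For the second identity, commute $\ast$ past $\nabla^{LC}$ to get $\nabla^{LC}_{e_i}(\ast Q) = -\ast(e_i\wedge P)$, rewrite $\ast(e^i\wedge P) = -\iota_{e_i}(\ast P)$ via the standard Hodge-contraction identity $\iota_X(\ast\omega)=\ast(\omega\wedge X^\flat)$, and apply the contraction-wedge identity once more to the $3$-form $\ast P$ to conclude $d(\ast Q) = 3\ast P$. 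The chief obstacle is sign bookkeeping around the Clifford transpose and the Hodge star; once these are fixed by the self-adjointness statement from the preceding paragraphs, the remainder is a routine symbolic calculation.
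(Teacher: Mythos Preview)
Your proposal is correct and follows essentially the same approach as the paper: both differentiate the defining identity $8\psi\otimes\psi^T=1+P-Q$ using the Killing spinor equation to obtain $\nabla^{LC}_X(P-Q)=\tfrac12[X,P-Q]$, invoke Lemma~\ref{lemma:algebra}, and then antisymmetrise. The only difference is in the treatment of $\dd\ast Q$: the paper stays in the Clifford algebra, right-multiplying by $\Vol$ (using $P\cdot\Vol=\ast P$, $Q\cdot\Vol=\ast Q$) so that commutators become anticommutators and the same bookkeeping applies, whereas you first isolate $\nabla^{LC}_XQ=-X\wedge P$ and then pass through the Hodge star via $\iota_X(\ast\omega)=\ast(\omega\wedge X^\flat)$; both routes are equally short and yield the same sign.
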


\begin{proof}
 The exterior derivative of any form $\phi$ may be calculated using the Levi-Civita connection via the identity
 \[ \dd\phi = \nabla^{LC}\wedge\phi. \]
 Let $e^a$ be a local orthonormal frame for the cotangent bundle, and write $\nabla^{LC} = e^a\otimes \nabla_a^{LC}$.  Then, from Equation~\eqref{def PQ} defining $P$ and $Q$ and the Killing spinor equation Equation~\eqref{KS},
 \begin{align*}
  \nabla\wedge (1+P-Q) &= \sum_{a=1}^6 e^a\wedge \frac{1}{2}[e^a,1+P-Q] \\
                       &= \sum_{a=1}^6 e^a\wedge (e^a\wedge P + e^a\lrcorner Q) \\
                       &= 4 Q.
 \end{align*}
 Therefore $\dd P= 4Q$ and $\dd Q=0$.  Similarly, since $P\cdot\Vol = \ast P$ and $Q\cdot\Vol = \ast Q$,
 \begin{align*}
  \nabla\wedge (\Vol+\ast P-\ast Q) &= \sum_{a=1}^6 e^a\wedge \frac{1}{2}[e^a,(1+P-Q)]\cdot\Vol \\
                                    &= \sum_{a=1}^6 e^a\wedge \frac{1}{2}\{e^a,\Vol+\ast P-\ast Q\} \\
                                    &= \sum_{a=1}^6 e^a\wedge (-e^a\lrcorner\ast P-e^a\wedge \ast Q\} \\
                                    &= -3\ast P.
 \end{align*}
 Therefore $\dd\ast Q=3\ast P$ and $\dd\ast P = 0$.
\end{proof}
It should also be noted that the real Killing spinor equation is equivalent to the differential equations
\[ (\nabla_X J)X=0\quad\forall X\in\Gamma(TM), \]
for $J$ \cite{Grunewald90}.  Thus K\"ahler six-manifolds may equivalently be defined to be almost Hermitian manifolds whose non-integrable almost complex structure satisfies this identity.

Vector fields $X$ that preserve the metric $g$ and the Killing spinor $\psi$ are called \emph{automorphic}.   If $X$ is automorphic, then $X$ preserves also $P,Q,\omega,\Omega$ and $J$.

A key feature of nearly K\"ahler geometry is the presence of a distinguished connection on the tangent bundle with holonomy SU(3) and skew parallel torsion.  Let $t\in\RR$ be a parameter, and let $\nabla^t$ be the connection constructed from the Levi-Civita connection $\nabla^{LC}$ as follows:
\begin{equation}
 g(\nabla^t_X Y,Z ) = g(\nabla^{LC}_X Y, Z ) + \frac{t}{2}P(X,Y,Z)\quad \forall X,Y,Z\in \Gamma(TM).
\end{equation}
The torsion tensor $T^t$ of the connection $\nabla^t$ is proportional to $P$:
\[
 g(X,T^t(Y,Z)) = t P(X,Y,Z).
\]
The connection $\nabla^t$ acts on sections $\eta$ of the spin bundle as follows:
\begin{equation}
\label{spin connection}
\nabla^t_X \eta = \nabla^{LC}_X \eta + \frac{t}{4} (i_X P)\cdot\eta.
\end{equation}
It follows from Equation~\eqref{KS} and Lemma \ref{evals PQ} that, for any vector field $X$,
\begin{align*}
\nabla^t_X\psi &= \frac{1}{2} X\cdot\psi - \frac{t}{8} (X\cdot P + P\cdot X)\cdot\psi \\
&= \frac{1-t}{2} X\cdot\psi.
\end{align*}
Therefore $\psi$ is parallel with respect to the connection $\nabla^1$, and $\nabla^1$ has holonomy group contained in SU(3).  The connection $\nabla^1$ is known as the ``canonical'' or ``characteristic'' connection.

For later use, we note here the following formula for the Ricci curvature tensor of the connection $\nabla^t$:
\begin{proposition}\label{prop:Ricci}
 The Ricci tensor $\Ric^t$ of the connection $\nabla^t$ is equal to $(5-t^2)$ times the metric $g$.
\end{proposition}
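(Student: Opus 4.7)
The plan is to compute $\Ric^t$ by expanding $R^t$ around the Levi-Civita connection. Writing $\nabla^t = \nabla^{LC} + B$ with $g(B(X,Y),Z) = \tfrac{t}{2}P(X,Y,Z)$, one has
\[ R^t(X,Y)Z = R^{LC}(X,Y)Z + (\nabla^{LC}_X B)(Y,Z) - (\nabla^{LC}_Y B)(X,Z) + B(X,B(Y,Z)) - B(Y,B(X,Z)), \]
since the terms involving $B(\nabla^{LC}_X Y,Z)$ cancel against $B([X,Y],Z)$ by torsion-freeness of $\nabla^{LC}$. Tracing the first slot against an orthonormal frame $\{e_a\}$ yields $\Ric^t(Y,Z)$; the Levi-Civita piece supplies the Einstein value $5g(Y,Z)$, so the task is to show that the remaining four contributions sum to $-t^2 g(Y,Z)$.

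For the derivative terms I would first establish the nearly K\"ahler identity $\nabla^{LC}_X P = X\lrcorner Q$. This is obtained by differentiating $8\psi\otimes\psi^T = 1+P-Q$ via the Killing spinor equation \eqref{KS}, using that Clifford multiplication by a $1$-form is anti-self-adjoint (so that $\nabla^{LC}_X$ acts on $\psi\otimes\psi^T$ as the commutator $\tfrac{1}{2}[X,\,\cdot\,]$), and then applying Lemma \ref{lemma:algebra} together with degree considerations to read off the $3$-form component; this is essentially the same computation already performed in Lemma \ref{derivatives PQ}. Given this identity, the two traces $\sum_a(\nabla^{LC}_{e_a}P)(e_a,Y,Z) = \sum_a Q(e_a,e_a,Y,Z)$ and $\sum_a(\nabla^{LC}_Y P)(e_a,Z,e_a)$ both vanish by antisymmetry, so the derivative terms contribute nothing.

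The quadratic terms are handled next. The first, $\sum_a g(B(e_a,B(Y,Z)),e_a) = \tfrac{t^2}{4}\sum_{a,b}P(e_a,e_b,e_a)P(Y,Z,e_b)$, is zero by antisymmetry, leaving $-\tfrac{t^2}{4}T(Y,Z)$ with $T(Y,Z) := \sum_{a,b}P(Y,e_a,e_b)P(Z,e_a,e_b)$. Since $P$ is $\mathrm{SU}(3)$-invariant, $T$ is a symmetric $\mathrm{SU}(3)$-invariant $2$-tensor on $V$; and since the $6$-dimensional real representation of $\mathrm{SU}(3)$ on $V$ is irreducible, $T$ must be a scalar multiple of $g$. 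Taking the trace and using $\|P\|^2 = 4$ from Lemma \ref{evals PQ} pins the constant down as $4$, so the quadratic contribution is $-t^2 g(Y,Z)$, giving $\Ric^t = (5-t^2)g$.

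The only delicate step is the identity $\nabla^{LC}_X P = X\lrcorner Q$; the rest is bookkeeping using antisymmetry of $P$ and $Q$ and $\mathrm{SU}(3)$-invariance of the structure. A more conceptual alternative would iterate $\nabla^t_X\psi = \tfrac{1-t}{2}X\cdot\psi$ to read off $R^t(X,Y)\psi$ as a Clifford element acting on $\psi$, and then extract $\Ric^t$ via a Lichnerowicz-type contraction adapted to connections with skew torsion; but the direct tensorial route above is the more transparent one.
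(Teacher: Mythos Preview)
Your argument is correct. It is, however, organised rather differently from the paper's proof, so a brief comparison is worthwhile.

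The paper does not derive the curvature expansion of $\nabla^t$ from scratch; instead it invokes the Friedrich--Ivanov formula
\[
\Ric^0(X,Y) = \Ric^t(X,Y) + \tfrac{t}{2}\,\dd^\ast P(X,Y) + \tfrac{t^2}{2}\,g(i_XP,i_YP),
\]
valid for any metric connection with totally skew torsion. Your computation effectively rederives this identity in the special case at hand: your observation that the $\nabla^{LC}B$ traces vanish is exactly the statement $\dd^\ast P=0$ (one of your two vanishing sums \emph{is} $-\dd^\ast P(Y,Z)$, the other vanishes by antisymmetry and needs no identity at all), and your quadratic term $\tfrac{t^2}{4}T(Y,Z)$ is precisely $\tfrac{t^2}{2}g(i_YP,i_ZP)$.

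Where the two proofs genuinely diverge is in the evaluation of this quadratic term. The paper stays within its spinorial framework: it rewrites $g(i_XP,i_YP)=-\tfrac{1}{32}\Tr_S(\{X,P\}\{Y,P\})$, computes the action of $\{X,P\}$ on $\psi$, $\Vol\cdot\psi$ and $Z\cdot\psi$ using Lemma~\ref{evals PQ}, and reads off the trace. Your route---observe that $T$ is an $\SU(3)$-invariant symmetric $2$-tensor on the irreducible real $6$-dimensional representation, hence a multiple of $g$, and fix the constant by tracing against $\|P\|^2=4$---is shorter and more elementary, and avoids any Clifford bookkeeping. The paper's approach has the advantage of being uniform with the spinorial methods used elsewhere (and the intermediate formulae for $\{X,P\}$ acting on spinors are of independent use), but for this proposition alone your Schur-lemma argument is the cleaner one.
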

\begin{proof}
 Friedrich and Ivanov derive \cite{Friedrich-Ivanov-skewtorsion} an expression for the Ricci tensor of a connection with totally skew-symmetric torsion in terms of the Ricci tensor of the Levi-Civita connection and the torsion 3-form.  In the case where the torsion three-form is $tP$ their formula reads
 \[ \Ric^0(X,Y) = \Ric^t(X,Y) + \frac{t}{2} \dd^\ast P (X,Y) + \frac{t^2}{2} g(i_XP, i_YP). \]
 By Lemma \ref{derivatives PQ}, $\dd^\ast P= 0$.  As has already been noted, the Ricci tensor $\Ric^0$ of the Levi-Civita connection equals 5 times the metric $g$.

 The remaining term may be evaluated by a direct calculation, using the identity
 \[ g(i_XP,i_YP) = -\frac{1}{32}\Tr_S(\{X,P\}\{Y,P\}). \]
 With the aid of Lemma \ref{evals PQ} one finds that
 \begin{align*}
  \{X,P\} \psi &= X\cdot P\cdot\psi \\
               &= 4X\cdot\psi,
 \end{align*}
 and similarly that
 \begin{align*}
  \{X,P\} \Vol\cdot \psi &= -4X\cdot\Vol\cdot\psi \\
               &= 4JX\cdot\psi.
 \end{align*}
 Finally, $\{X,P\}\cdot Z\cdot\psi$ can be evaluated by taking inner products with elements of $SM$.  It holds that
 \begin{align*}
 (\psi,\{X,P\}\cdot Z\cdot\psi) &= (\psi,P\cdot X\cdot Z\cdot\psi) \\
 &= (P\cdot\psi,X\cdot Z\cdot\psi) \\
 &= 2\Bigl(\psi,\bigl([X,Z]-2g(X,Z)\bigr)\psi\Bigr) \\
 &= -4g(X,Z)
 \end{align*}
 since the element $[X,Z]$ in the Clifford algebra is an antisymmetric endomorphism of $SM$.
 Similarly, $(\Vol\cdot\psi,\{X,P\}\cdot Z\cdot\psi)=-4g(JX,Z)\psi$ and $(W\cdot\psi,\{X,P\}\cdot Z\cdot\psi)=0$ for all $W\in TM$.  Therefore
 \begin{align*}
  \{X,P\} Z\cdot\psi = -4g(X,Y)\psi -4g(JX,Z)\Vol\cdot\psi.
 \end{align*}
 These formulae together allow evaluation of the trace:
 \[ -\frac{1}{32}\Tr_S(\{X,P\}\{Y,P\}) = 2g(X,Y). \]
 The result follows.
\end{proof}

\section{Instantons and deformations}
\label{sec:instdef}

Let $A$ be a connection on a principal $K$-bundle $\PPP$ over a nearly K\"ahler six-manifold $(M,g,\psi)$ and let $F$ be its curvature.  Then $A$ is called an \emph{instanton} if its curvature $F$ satisfies
\begin{equation}
\label{instanton eq 1}
 F\cdot\psi = 0.
\end{equation}
Note that in this equation only the two-form part of $F$ is acting on $\psi$.  Thus if the adjoint bundle associated to the principal bundle $\PPP$ is denoted $\Ad_\PPP$, the left hand side is a section of $\Ad_\PPP\otimes SM$.

The instanton Equation~\eqref{instanton eq 1} can be reformulated in a number of ways.  Firstly, the SU(3)-structure defines a subbundle $\su(3)M$ of $\End(TM)$ and also, via the metric-induced isomorphism $\End(TM)\cong \Lambda^2M$, of $\Lambda^2 M$.   The instanton Equation~\eqref{instanton eq 1} is equivalent to the statement,
\begin{equation}
 \label{instanton eq 2}
 F\in \Gamma(\su(3)M\otimes \Ad_\PPP)\subset\Gamma(\Lambda^2M\otimes \Ad_\PPP).
\end{equation}

Secondly, as on K\"ahler manifolds, the instanton equation given by Equation~\eqref{instanton eq 2} is equivalent to the Hermitian--Yang--Mills equation
\begin{equation}
\label{instanton eq 4}
F^{2,0}=0,\quad \omega\lrcorner F=0.
\end{equation}

Thirdly, the instanton equation is equivalent to
\begin{equation}
 \label{instanton eq 3}
 F\lrcorner Q = - F.
\end{equation}
It is straightforward to see that Equation~\eqref{instanton eq 1} implies Equation~\eqref{instanton eq 3}: Equation~\eqref{instanton eq 1} implies that the two-form part of $F$ acts trivially on $\Lambda^0\oplus\Lambda^6$ and sends $\Lambda^1$ to itself with respect to the decomposition $S\cong\Lambda^0\oplus\Lambda^1\oplus\Lambda^6$; Lemma \ref{evals PQ} then implies that $F\lrcorner Q = -\frac12\{F,Q\} = -\frac12\{F,1\} = -F$.  It can further be proved that \eqref{instanton eq 3} implies \eqref{instanton eq 4} by classifying the eigenvalues and eigenspaces of the operator on two-forms given by contraction with $Q$ \cite{HILP10}.

Instantons on nearly K\"ahler manifolds have the desirable property of solving the Yang--Mills equation -- see \cite{HN12} for a proof based on Equation~\eqref{instanton eq 1} and the Killing spinor equation, or \cite[Prop 2.10]{Xu09} for a proof based on Equation~\eqref{instanton eq 3} and the differential identities given in Lemma \ref{derivatives PQ}.  The canonical connection $\nabla^1$ on the tangent bundle of a nearly K\"ahler manifold is always an instanton \cite{HN12}.


The purpose of this note is to study perturbations of instantons.  A perturbation of a connection $A$ is a section $\epsilon$ of $\Ad_\PPP\otimes T^\ast M$, and to leading order the corresponding perturbation of the curvature $F$ is $\dd^A\epsilon$.  The gauge freedom in the perturbation can be fixed by imposing the standard condition $\dd^A\ast\epsilon=0$; thus an infinitesimal perturbation of an instanton $A$ is given by a solution $\epsilon$ to the equations,
\begin{equation}
\label{instanton perturbations}
 \dd^A\epsilon\cdot\psi = 0,\quad \dd^A\ast\epsilon = 0.
\end{equation}
The purpose of the next proposition is to identify solutions of the above system with eigenmodes of a Dirac operator.

\begin{proposition}
 Let $\epsilon$ be a section of $\Ad_\PPP\otimes T^\ast M$, let $t\in\RR$, and let $D^{t,A}$ be the Dirac operator constructed from the connections $\nabla^t$ and $A$.  Then $\epsilon$ solves equations \eqref{instanton perturbations} if and only if
 \begin{equation}\label{spinor instanton perturbations} D^{t,A} (\epsilon\cdot\psi) = 2 \epsilon\cdot\psi. \end{equation}
\end{proposition}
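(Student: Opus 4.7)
The plan is to expand $D^{t,A}(\epsilon \cdot \psi)$ via the Leibniz rule, simplify using the Killing spinor equation and the Clifford identities in Lemmas \ref{lemma:algebra} and \ref{evals PQ}, and then read off the two equations in \eqref{instanton perturbations} from the orthogonal decomposition $SM \cong \Lambda^0 \oplus \Lambda^6 \oplus \Lambda^1$ established in Section \ref{sec:geomNK}.

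Since $\nabla^{t,A}$ is compatible with Clifford multiplication, Leibniz gives
\[ D^{t,A}(\epsilon \cdot \psi) = \sum_a e^a \cdot (\nabla^{t,A}_a \epsilon) \cdot \psi + \sum_a e^a \cdot \epsilon \cdot \nabla^t_a \psi. \]
For the second sum I would substitute the identity $\nabla^t_X \psi = \tfrac{1-t}{2} X \cdot \psi$ (derived in Section \ref{sec:geomNK}) and the standard Clifford identity $\sum_a e^a \cdot v \cdot e_a = (n-2)v = 4v$ (for $v$ a one-form in dimension six), reducing it to $2(1-t)\epsilon \cdot \psi$. For the first sum I split $e^a \cdot v = e^a \wedge v - e^a \lrcorner v$ by Lemma \ref{lemma:algebra}, and compare with the general formula $\dd\alpha(X,Y) = (\nabla^t_X \alpha)(Y) - (\nabla^t_Y \alpha)(X) + \alpha(T^t(X,Y))$ for a connection with torsion $T^t$. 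Since $T^t = tP$, this yields $\sum_a e^a \wedge \nabla^{t,A}_a \epsilon = \dd^A \epsilon - t\,\epsilon \lrcorner P$. Meanwhile $\sum_a e^a \lrcorner \nabla^{t,A}_a \epsilon$ coincides with its Levi--Civita value, because the only $t$-dependent correction to the trace is proportional to $P^c{}_{aa}$ and vanishes by antisymmetry of $P$; it therefore equals $-\dd^{A*}\epsilon$, where $\dd^{A*}$ denotes the formal adjoint of $\dd^A$ on one-forms.

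The torsion term $(\epsilon \lrcorner P) \cdot \psi$ is the key input from nearly K\"ahler geometry. Using Lemma \ref{lemma:algebra} once more (with $p=3$ odd) gives $\epsilon \lrcorner P = -\tfrac{1}{2}(\epsilon \cdot P + P \cdot \epsilon)$, and Lemma \ref{evals PQ} tells us that $P \cdot \psi = 4\psi$ while $P$ annihilates the subspace $\Lambda^1 \subset SM$ containing $\epsilon \cdot \psi$. Consequently
\[ (\epsilon \lrcorner P) \cdot \psi = -\tfrac{1}{2}\bigl(4 \epsilon \cdot \psi + 0\bigr) = -2 \epsilon \cdot \psi. \]
Substituting back, the contributions $-t(\epsilon \lrcorner P)\cdot\psi = 2t\,\epsilon \cdot \psi$ and $2(1-t)\epsilon \cdot \psi$ combine into $2\epsilon \cdot \psi$ with no residual $t$-dependence, and the whole calculation collapses to
\[ D^{t,A}(\epsilon \cdot \psi) - 2\epsilon \cdot \psi = (\dd^A \epsilon) \cdot \psi + (\dd^{A*}\epsilon) \cdot \psi. \]

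To conclude I would separate this single spinor equation into the pair \eqref{instanton perturbations} using the SU(3)-equivariant orthogonal splitting $SM \cong \Lambda^0 \oplus \Lambda^6 \oplus \Lambda^1$. The scalar $(\dd^{A*}\epsilon) \cdot \psi$ is a multiple of $\psi$ and thus lies in the $\Lambda^0$ summand. On the other hand any two-form $F$ decomposes under SU(3) as $\langle\omega\rangle \oplus \su(3) \oplus \Lambda^2_6$: the $\su(3)$ piece annihilates the invariant spinor $\psi$, the $\omega$-piece produces a multiple of $\Vol \cdot \psi \in \Lambda^6$ (via $\omega = \ast Q$ together with Lemma \ref{evals PQ}), and the $\Lambda^2_6$ piece lands in $\Lambda^1$. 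Hence $(\dd^A\epsilon) \cdot \psi$ lies in $\Lambda^6 \oplus \Lambda^1$, which is orthogonal to $\Lambda^0$; the vanishing of the sum is therefore equivalent to the vanishing of each summand, i.e.\ to $\dd^A\epsilon \cdot \psi = 0$ together with $\dd^{A*}\epsilon = 0$ (equivalently $\dd^A \ast \epsilon = 0$). The main obstacle I anticipate is keeping all the torsion corrections straight so that the cancellation producing a $t$-independent answer is visible; once that is in hand, the SU(3)-decomposition step is essentially immediate.
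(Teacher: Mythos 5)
Your proof is correct and rests on the same ingredients as the paper's argument: the Leibniz rule for the Dirac operator, the Killing spinor equation, the identity $e^a\cdot\epsilon\cdot e^a=4\epsilon$, the eigenvalue table for $P$ in Lemma \ref{evals PQ}, and the $\SU(3)$-equivariant decomposition $SM\cong\Lambda^0\oplus\Lambda^1\oplus\Lambda^6$ for the final separation. Where you differ is purely in the bookkeeping of the torsion. The paper first expands $D^{0,A}(\epsilon\cdot\psi)$ using the torsion-free connection, obtaining $(\dd^A\epsilon+(\dd^A)^\ast\epsilon+2\epsilon)\cdot\psi$ with no $t$ anywhere, and then observes that $D^{t,A}=D^{0,A}+\tfrac{3t}{4}P$ with $P$ killing $\epsilon\cdot\psi\in\Lambda^1$ outright, so $t$-independence is manifest in a single stroke. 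You instead expand $D^{t,A}$ directly, so the torsion shows up twice: in the Killing-spinor factor $(1-t)/2$ and in the torsion correction $-t\,\epsilon\lrcorner P$ to $\sum_a e^a\wedge\nabla^{t,A}_a\epsilon$, and these two $t$-dependent pieces cancel after you evaluate $(\epsilon\lrcorner P)\cdot\psi=-2\epsilon\cdot\psi$. Both routes are valid; the paper's is marginally shorter because it isolates the torsion in one Clifford multiplication by $P$, while yours makes the mechanism of the cancellation more explicit. Your check that the $t$-correction to $(\dd^A)^\ast$ vanishes because a trace of the antisymmetric $P$ over two indices is zero is also correct and is a detail the paper leaves implicit.
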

\begin{proof}
 Let $e^a$ be a local orthonormal frame for $T^\ast M$.  The identity
 \[
  D^{0,A} (\epsilon\cdot\psi) = (\dd^A \epsilon + (\dd^A)^\ast\epsilon)\cdot\psi + e^a\cdot\epsilon\cdot \nabla^0_a\psi
 \]  
is easily verified. The Killing spinor Equation~\eqref{KS} and the identity $e^a\cdot \epsilon\cdot e^a = 4\epsilon$ then imply that
 \[
  D^{0,A}(\epsilon\cdot\psi) = (\dd^A \epsilon + (\dd^A)^\ast\epsilon+2\epsilon)\cdot\psi.
 \]

 It follows from Equation~\eqref{spin connection} that the Dirac operator $D^{t,A}$ is given by
 \[ D^{t,A} = D^{0,A} + \frac{3t}{4} P. \]
 Then by Lemma \ref{evals PQ}, we have
 \[
  D^{t,A}(\epsilon\cdot\psi) = (\dd^A \epsilon + (\dd^A)^\ast\epsilon+2\epsilon)\cdot\psi.
 \]

 From this identity one obtains that the equation $D^{t,A} (\epsilon\cdot\psi) = 2 \epsilon\cdot\psi$ is equivalent to $(\dd^A \epsilon + (\dd^A)^\ast\epsilon)\cdot\psi=0$.  The latter is equivalent to the pair \eqref{instanton perturbations} of equations, because the two components $\dd^A \epsilon\cdot\psi$ and $(\dd^A)^\ast\epsilon\cdot\psi$ belong to the complementary subspaces $(\Lambda^1M\oplus\Lambda^6M)\otimes\Ad_\PPP$ and $\Lambda^0M\otimes\Ad_\PPP$    of $SM\otimes\Ad_\PPP$.
\end{proof}

It is worth noting that Equation~\eqref{spinor instanton perturbations} is independent of $t$.  
The linearised instanton equations can also be formulated as part of an elliptic complex \cite{ReyesCarrion98}.  
However, we have found the spinorial formulation more practical to work with, not least because there already exists a large body of literature containing useful identities for Dirac operators with torsionful connections.

By the previous proposition, to prove that an instanton is rigid it suffices to prove that 2 does not belong to the spectrum of the restriction of the Dirac operator $D^{t,A}$ to $\Lambda^1M\otimes\Ad_\PPP\subset SM\otimes\Ad_\PPP$.  To this end, we need a Schr\"odinger--Lichnerowicz formula for the square of the Dirac operator.  Such a formula has been obtained in the case $A=0$ by Agricola and Friedrich \cite{AF04}; the following proposition provides a gauged version of their formula.  A complete proof is presented below in order to keep this discussion self-contained.
\begin{proposition}\label{torsionful SL formula}
 Let $EM$ be the vector bundle obtained from $P$ through some representation $E$ of $G$ and let $\eta\in\Gamma(EM\otimes SM)$.  Let $A$ be any connection on $P$ and let $t\in\RR$.  Then
\begin{equation}
\label{SLNK0}
 (D^{t/3,A})^2\eta = (\nabla^{t,A})^\ast\nabla^{t,A}\eta + \frac14\Scal_g\eta + \frac{t}{4}\dd P\cdot\eta - \frac{t^2}{8}\|P\|^2\eta + F\cdot\eta.
\end{equation}
 (Note that in this formula the two-form part of $F$ acts by Clifford multiplication on $SM$ and the $\Ad_\PPP$ part of $F$ acts on $EM$ in the usual way)
\end{proposition}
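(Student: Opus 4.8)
The plan is to derive the formula by interpolating between the torsion-free Schrödinger–Lichnerowicz identity and the torsionful one, tracking the gauge field $F$ throughout. First I would set up a local orthonormal coframe $e^a$ and write $D^{t/3,A} = \sum_a e^a\cdot\nabla^{t/3,A}_a$, where $\nabla^{t/3,A}$ is the tensor-product connection acting on $EM\otimes SM$ built from $A$ on $EM$ and the spin connection $\nabla^{t/3}$ from Equation~\eqref{spin connection} on $SM$. Squaring gives
\[
 (D^{t/3,A})^2\eta = \sum_{a,b} e^a\cdot e^b\cdot \nabla^{t/3,A}_a\nabla^{t/3,A}_b\eta,
\]
and splitting the $e^a\cdot e^b$ into its symmetric part $-\delta^{ab}$ and antisymmetric part, the symmetric part produces the rough Laplacian $(\nabla^{t/3,A})^\ast\nabla^{t/3,A}\eta$ together with a divergence term involving the torsion (which is controlled by $\dd^\ast P$, vanishing by Lemma~\ref{derivatives PQ}), while the antisymmetric part produces the curvature of $\nabla^{t/3,A}$ acting on $\eta$. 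The curvature of the tensor-product connection decomposes as the curvature of the spin connection $\nabla^{t/3}$ plus the $\Ad_\PPP$ part, namely $F$ acting on $EM$, and this last piece is exactly the $F\cdot\eta$ term in Equation~\eqref{SLNK0}.

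The remaining work is purely on the spinor bundle and is the content of the Agricola–Friedrich computation: one must show that the spin-connection curvature contribution, after Clifford contraction, equals $\frac14\Scal_g\eta + \frac t4\,\dd P\cdot\eta - \frac{t^2}{8}\|P\|^2\eta$. I would obtain this by writing $\nabla^{t/3} = \nabla^{LC} + \frac{t}{12}(i_\bullet P)\cdot$, expanding $(D^{t/3,A})^2$ in powers of $t$: the $t^0$ term is the classical Lichnerowicz formula giving $\frac14\Scal_g\eta$; the $t^1$ term collects the cross-terms between $\nabla^{LC}$ and the Clifford action of $P$, and after using $\dd P = \nabla^{LC}\wedge P$ together with the Clifford algebra identities of Lemma~\ref{lemma:algebra} (and $\dd^\ast P=0$) these assemble into $\frac t4\,\dd P\cdot\eta$; the $t^2$ term is algebraic, involving only $\sum_a (i_{e^a}P)\cdot(i_{e^a}P)$ acting on $SM$, which one evaluates to $-\frac{t^2}{8}\|P\|^2$ using $\|P\|^2 = \frac18\Tr_S(P^2)$ and the Clifford relations. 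The particular scaling $t/3$ in the Dirac operator versus $t$ in the connection is precisely what makes the $\frac{3t}{4}P$ shift observed in the previous proposition's proof drop out consistently; I would check this bookkeeping carefully since the factors of $3$ are where sign or coefficient errors are most likely to creep in.

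The main obstacle is the $t^1$ term: showing that the mixed term $\sum_{a,b} e^a\cdot e^b\cdot\bigl(\nabla^{LC}_a((i_{e^b}P)\cdot\eta) + (i_{e^a}P)\cdot\nabla^{LC}_b\eta\bigr)$, together with the contribution coming from the torsion of $\nabla^{t/3}$ entering the rough Laplacian, collapses to the clean expression $\frac t4\,\dd P\cdot\eta$ with no leftover first-order operator. This requires carefully separating the Clifford product $(i_{e^b}P)\cdot\eta$ into its graded pieces, commuting Clifford multiplication past $\nabla^{LC}_a$ (which is metric-compatible, so $\nabla^{LC}_a P$ enters), and invoking both $\dd P = \nabla^{LC}\wedge P = 4Q$ and $\dd^\ast P = 0$ from Lemma~\ref{derivatives PQ} to kill the divergence-type contributions. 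Once the cancellation of first-order terms is verified, the remaining identification of the zeroth-order pieces follows from Lemma~\ref{evals PQ} and standard Clifford bookkeeping, and Equation~\eqref{SLNK0} follows after renaming $t/3 \mapsto$ the connection parameter appropriately.
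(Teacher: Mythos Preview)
Your outline has the right ingredients but contains a genuine gap in how the $t/3$ versus $t$ scaling is handled, and this gap shows up precisely at the step you flag as ``the main obstacle''.

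You assert that the symmetric part of $e^a\cdot e^b\cdot\nabla^{t/3,A}_a\nabla^{t/3,A}_b$ produces the rough Laplacian $(\nabla^{t/3,A})^\ast\nabla^{t/3,A}$, and then separately that the $t^1$ cross terms in the expansion collapse to $\tfrac{t}{4}\dd P\cdot\eta$ ``with no leftover first-order operator''. Neither of these is what actually happens, and the two claims are in tension with each other. The Laplacian appearing in the target formula is $(\nabla^{t,A})^\ast\nabla^{t,A}$, with parameter $t$ rather than $t/3$. The first-order cross term $-\tfrac{t}{2}(e^a\lrcorner P)\cdot\nabla^{0,A}_a\eta$ arising from $\tfrac{t}{4}\{D^{0,A},P\}$ does \emph{not} vanish; it is exactly the term needed to promote $(\nabla^{0,A})^\ast\nabla^{0,A}$ to $(\nabla^{t,A})^\ast\nabla^{t,A}$. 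Likewise the $t^2$ term $\tfrac{t^2}{16}P\cdot P$ is not a pure scalar: it has a four-form piece $-\tfrac{t^2}{16}(e^a\lrcorner P)\wedge(e^a\lrcorner P)$ which again must be absorbed into $(\nabla^{t,A})^\ast\nabla^{t,A}$, leaving the scalar remainder $-\tfrac{t^2}{8}\|P\|^2$. The factor-of-three mystery you mention is resolved precisely by this absorption, not by any cancellation.

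The paper's proof avoids this confusion by never writing $D^{t/3,A}$ as $e^a\cdot\nabla^{t/3,A}_a$. Instead it uses the identity $D^{t/3,A}=D^{0,A}+\tfrac{t}{4}P$ (a zeroth-order Clifford shift), squares to get $(D^{0,A})^2+\tfrac{t}{4}\{D^{0,A},P\}+\tfrac{t^2}{16}P^2$, applies the ordinary Schr\"odinger--Lichnerowicz formula to the first term, and then \emph{independently} expands $(\nabla^{t,A})^\ast\nabla^{t,A}$ in terms of $(\nabla^{0,A})^\ast\nabla^{0,A}$ and matches. One then sees directly that the first-order piece $-\tfrac{t}{2}(e^a\lrcorner P)\cdot\nabla^{0,A}_a$, the $\tfrac{t}{4}\dd^\ast P$ term, and the quartic four-form piece all appear identically on both sides and cancel, leaving the stated formula. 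A side benefit is that no use of $\dd^\ast P=0$ is needed: the identity holds for an arbitrary three-form $P$, as the paper remarks immediately after the proof, so your appeals to Lemma~\ref{derivatives PQ} are unnecessary and in fact obscure the generality of the result.
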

\begin{proof}
 Let $e^1,\ldots,e^6$ be a local orthonormal frame for the tangent bundle.  The square of the Dirac operator expands as follows:
 \begin{align*}
 (D^{t/3,A})^2\eta &= \left(D^{0,A} + \frac{t}{4}P\right)^2\eta \\
                   &= (D^{0,A})^2\eta + \frac{t}{4}\{D^{0,A},P\}\eta + \frac{t^2}{16} P\cdot P\cdot\eta.
 \end{align*}
 By the usual Schr\"odinger--Lichnerowicz formula, the first of the three terms on the right of this expression is
 \[ (D^{0,A})^2\eta = (\nabla^{0,A})^*\nabla^{0,A}\eta + \frac14\Scal_g\eta + F\cdot\eta. \]
 The second term is
 \begin{align*}
  \frac{t}{4}\{D^{0,A},P\}\eta &= -\frac{t}{2}(e^a\lrcorner P)\cdot \nabla^{0,A}_a\eta + \frac{t}{4}\dd P\cdot\eta + \frac{t}{4}\dd^\ast P\cdot\eta.
  \end{align*}
  One simplifies the third term using the identity $\alpha\cdot\alpha=\|\alpha\|^2-(e^a\lrcorner \alpha\wedge e^a\lrcorner\alpha)$ valid for any three-form $\alpha$.  Note that the expression for $\alpha\cdot \alpha$ has no components in $\Lambda^2M$ or $\Lambda^6M$, because $\alpha\cdot \alpha$ is self-adjoint while two-forms and six-forms are skew-adjoint.   Thus
  \begin{align*}
  \frac{t^2}{16} P\cdot P\cdot\eta &= \frac{t^2}{16} \|P\|^2\eta - \frac{t^2}{16} (e^a\lrcorner P\wedge e^a\lrcorner P)\cdot\eta.
 \end{align*}

Now, for any two-form  $\beta$, we have $\beta\cdot\beta=-\|\beta\|^2+\beta\wedge\beta$.  Given that $\sum_a\|e^a\lrcorner P\|^2=3\|P\|^2$, we have $(e^a\lrcorner P)\cdot (e^a\lrcorner P)=-3\|P\|^2+(e^a\lrcorner P)\wedge (e^a\lrcorner P)$.  Hence at the center of a normal frame (where the Christoffel symbols vanish), we have
 \begin{align*}
  (\nabla^{t,A})^\ast\nabla^{t,A}\eta &= -\left(\nabla^{0,A}_a+\frac{t}{4}e^a\lrcorner P\right)\left(\nabla^{0,A}_a+\frac{t}{4}e^a\lrcorner P\right)\eta \\
                                      &= -\nabla^{0,A}_a\nabla^{0,A}_a\eta - \frac{t}{2}(e^a\lrcorner P)\cdot \nabla^{0,A}_a\eta + \frac{t}{4}\dd^\ast P\cdot\eta \\
                                      & \qquad\qquad\qquad+ \frac{t^2}{16} (3\|P\|^2-e^a\lrcorner P\wedge e^a\lrcorner P)\cdot\eta.
 \end{align*}

 Combining the above equations yields the desired result.
\end{proof}

The proof of the preceding proposition is very general and makes no assumptions about the dimension of $M$, the existence of Killing spinors, or whether $A$ is an instanton.  
In the case of interest,  the scalar curvature is equal to 30 (when $\lambda=1/2$), $\|P\|^2=4$, and $\dd P =4Q$ (see Lemma \ref{derivatives PQ}), hence
\begin{equation}
    (D^{t/3,A})^2 \eta = (\nabla^{t,A})^\ast\nabla^{t,A}\eta + \left(\frac{15-t^2}{2}\right)\eta  +tQ\cdot \eta+F\cdot \eta.
\end{equation}
This formula should be compared with \cite[Equation (2)]{AFK08} in the case $A=0$, $t=1$.

From Lemma \ref{evals PQ}  one learns that $Q$ acts as multiplication by 4 on $\Lambda^1 M\subset SM$ and as multiplication by $-3$ on $\Lambda^0 M\oplus \Lambda^6M\subset SM$.   By virtue of the instanton equation, the curvature term acts trivially on $\Lambda^0 M\subset SM$ while
\[ F\cdot \epsilon\cdot\psi = - 2 (F\llcorner \epsilon)\cdot\psi, \]
where it should be noted that $F$ acts on $\epsilon$ by contraction of forms and via the action of the Lie algebra of the gauge group on $E$.  Hence we obtain for $\eta\in\Gamma\Bigl(EM\otimes \bigl(\Lambda^0 M\oplus \Lambda^6M\bigr)\Bigr)$
\begin{align}
\label{SLNK16}	(D^{t/3,A})^2 \eta &= (\nabla^{t,A})^\ast\nabla^{t,A}\eta + \left(\frac{15}{2} -3t-\frac{t^2}{2} \right)\eta,\text{ while}\\
 \label{SLNK}
 (D^{t/3,A})^2 (\epsilon\cdot\psi) &= (\nabla^{t,A})^\ast\nabla^{t,A}(\epsilon\cdot\psi) + \left(\frac{15}{2} + t-\frac{t^2}{2} \right)\epsilon\cdot\psi - 2 (F\llcorner \epsilon)\cdot\psi.
\end{align}

The most useful case of the identity \eqref{SLNK} is when $t=1$, for the following two reasons.  Firstly, $t=1$ is the value that maximises the right hand side of the identity, and hence yields the strongest lower bound on the square of the Dirac operator.  Secondly, when $t=1$ the Laplace operator on the right hand side of the identity is the one for the canonical connection, which (as demonstrated in the next section) has useful representation-theoretical properties on homogeneous spaces.  Since $\psi$ is parallel with respect to $\nabla^1$, the $t=1$ case of the identity is equivalent to
\begin{equation}
 \label{SLNK2}
 (D^{1/3,A})^2 (\epsilon\cdot\psi) = \left((\nabla^{1,A})^\ast\nabla^{1,A}\epsilon + 8\epsilon - 2 F\llcorner \epsilon\right)\cdot\psi.
\end{equation}

From the point of view of analysing instantons, the most useful case of Proposition \ref{torsionful SL formula} is when the vector bundle $EM$ equals $\Ad_\PPP$.  Let $H\subset K$ denote the holonomy group of the connection $A$.  The group $H$ acts on the Lie algebra $\kk$ of $K$.  Let $\kk_1\subset\kk$ be the subspace on which  $H$ acts trivially, and suppose that there is a complementary subspace $\kk_0\subset\kk$ such that $\kk\cong\kk_0\oplus\kk_1$ (when $\kk$ admits an $H$-invariant non-degenerate bilinear form, such a complementary subspace exists).  There is a corresponding splitting of the adjoint bundle:
\[ \Ad_\PPP = L_0\oplus L_1. \]
\begin{proposition}\label{complexification}
Let $A$ be an instanton on $\PPP$ with holonomy group $H$ and suppose that $\Ad_\PPP$ splits as above.  Then
\begin{enumerate}[(i)]
\item $\ker((D^{1/3,A})^2-4) = \ker((D^{1/3,A})^2-4) \cap (\Omega^1L_0\oplus\Omega^0L_1\oplus \Omega^6L_1$);
\item $\ker((D^{1/3,A})^2-4) \cap (\Omega^0L_1\oplus\Omega^6L_1) \cong 2\kk_1$;
\item $\ker((D^{1/3,A})^2-4) \cap \Omega^1L_0 \cong 2\Bigl(\ker(D^{1/3,A}-2) \cap \Omega^1L_0\Bigr)$.
\end{enumerate}
Moreover, the volume form induces on both $\ker((D^{1/3,A})^2-4) \cap (\Omega^0L_0\oplus\Omega^6L_0)$ and $\ker((D^{1/3,A})^2-4) \cap \Omega^1L_0$ almost-complex structures that swap the two copies of the vector space in the decompositions given above.
\end{proposition}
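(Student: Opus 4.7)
The plan is to decompose $SM \otimes \Ad_\PPP$ into the six pieces $(\Omega^0 \oplus \Omega^1\cdot\psi \oplus \Omega^6) \otimes (L_0 \oplus L_1)$ coming from the spinor isomorphism \eqref{spinor isomorphism} and the holonomy splitting, and to analyse the eigenvalue equation $(D^{1/3,A})^2\eta = 4\eta$ on each piece separately by specialising the Weitzenb\"ock identities \eqref{SLNK16} and \eqref{SLNK2} to $t=1$. These become $(D^{1/3,A})^2 = (\nabla^{1,A})^\ast\nabla^{1,A} + 4$ on $(\Omega^0 \oplus \Omega^6) \otimes \Ad_\PPP$, and $(D^{1/3,A})^2(\epsilon\cdot\psi) = \bigl((\nabla^{1,A})^\ast\nabla^{1,A}\epsilon + 8\epsilon - 2F\llcorner\epsilon\bigr)\cdot\psi$ on $\Omega^1\cdot\psi \otimes \Ad_\PPP$.

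The key structural input is that $L_1$ is a flat trivial subbundle with a trivial curvature action: by Ambrose--Singer, $F$ is valued in the holonomy subalgebra $\hh$, while by construction $\kk_1$ is fixed pointwise by $H$, so $[\hh,\kk_1]=0$; hence the induced connection on $L_1$ is trivial and $F\llcorner\epsilon = 0$ whenever $\epsilon$ is valued in $L_1$. With this in hand, on four of the six pieces the eigenvalue equation degenerates to the parallelness condition $\nabla^{1,A}\eta = 0$ via integration by parts on compact $M$. This rules out $L_0$-valued zero- and six-forms, because $L_0$ admits no $H$-invariant line by the choice of complement; it produces one copy of $\kk_1$ from each of $\Omega^0 L_1$ and $\Omega^6 L_1$, giving (ii); and on $\Omega^1 L_1$ it reduces to $(\nabla^{1,A})^\ast\nabla^{1,A}\epsilon = -4\epsilon$, which is incompatible with positivity of the left-hand side unless $\epsilon = 0$. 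Together these establish (i).

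For (iii) and the almost-complex structures I would use Clifford multiplication by $\Vol$. In dimension six $\Vol^2 = -1$ and $\Vol$ anticommutes with every one-form, so it is parallel for $\nabla^{1,A}$ and $\{\Vol, D^{1/3,A}\} = 0$. Factoring $(D^{1/3,A})^2 - 4 = (D^{1/3,A} - 2)(D^{1/3,A} + 2)$, the kernel decomposes as the direct sum of the $\pm 2$ eigenspaces of $D^{1/3,A}$, and $\Vol$ exchanges these while squaring to $-1$. The identity $\Vol \cdot u \cdot \psi = Ju\cdot\psi$ shows that $\Vol$ preserves the subbundle $\Omega^1\cdot\psi \subset SM$, acting there as the almost complex structure $J$; restricting to $\Omega^1 L_0\cdot\psi \cap \ker((D^{1/3,A})^2 - 4)$ gives the isomorphism asserted in (iii) and the almost complex structure on this space. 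On the scalar/six-form component $\Vol$ simply interchanges $\Omega^0\cdot\psi$ with $\Omega^6\cdot\psi = \Omega^0\cdot\Vol\cdot\psi$, producing the second almost complex structure in the same way.

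The main obstacle I foresee is the careful treatment of the curvature term on the $L_1$ component: the vanishing $F\llcorner\epsilon = 0$ relies jointly on Ambrose--Singer and on the fact that the complement $\kk_0$ has been chosen $H$-invariantly so that $\kk_1 = \kk^H$, and this is the one slightly non-formal input in what is otherwise a direct application of the two Weitzenb\"ock identities combined with elementary representation theory of Clifford multiplication in six dimensions.
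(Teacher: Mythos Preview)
Your proof is correct and matches the paper's approach essentially line for line: the same Weitzenb\"ock identities at $t=1$, the same vanishing of the curvature term on $L_1$, the same positivity/parallelness arguments for (i) and (ii), and the same use of $\Vol$ as a parallel anticommuting square root of $-1$ for (iii). The one step you leave tacit---that $D^{1/3,A}$ preserves $\ker((D^{1/3,A})^2-4)\cap\Omega^1L_0$, so that the $\pm2$-eigenspace splitting actually restricts there---is precisely what the paper fills in, noting that by part (i) this intersection equals $\ker((D^{1/3,A})^2-4)\cap\Gamma(SM\otimes L_0)$, which is manifestly $D^{1/3,A}$-stable.
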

\begin{proof}
Note that $\nabla^{1,A}$ respects the decomposition
\begin{multline*}
SM\otimes \Ad_\PPP = (\Lambda^1M\otimes L_0)\oplus(\Lambda^1M\otimes L_1)\\
\oplus((\Lambda^0M\oplus\Lambda^6M)\otimes L_0)\oplus ((\Lambda^0M\oplus\Lambda^6M)\otimes L_1),
\end{multline*}
as does $(D^{1/3,A})^2$ (by Proposition \ref{torsionful SL formula}).  Therefore to establish the first identity we must show that $\ker((D^{1/3,A})^2-4)\cap \Omega^1L_1$ and $\ker((D^{1/3,A})^2-4)\cap (\Omega^0L_0\oplus\Omega^6L_0)$ are zero-dimensional vector spaces.  If $\epsilon\cdot\psi\in\ker((D^{1/3,A})^2-4)\cap \Omega^1L_1$ then the term involving $F$ in Equation~\eqref{SLNK2} is zero (because $H$ acts trivially on $\kk_1$).  So
\begin{align*}
0 
&= \int_M (\epsilon\cdot\psi, ((D^{1/3,A})^2-4)\epsilon\cdot\psi) \Vol \\
&= \int_M\left[ (\nabla^{1,A}\epsilon\cdot\psi, \nabla^{1,A}\epsilon\cdot\psi)+4(\epsilon\cdot\psi,\epsilon\cdot\psi)\right]\Vol \\
&\geq 4\int_M(\epsilon\cdot\psi,\epsilon\cdot\psi)\Vol.
\end{align*}
Therefore $\epsilon\cdot\psi=0$.  If $\eta\in \ker((D^{1/3,A})^2-4)\cap (\Omega^0L_0\oplus\Omega^6L_0)$, we use Equation~\eqref{SLNK16} to get
\begin{align*}
0 
&= \int_M (\eta, ((D^{1/3,A})^2-4)\eta) \Vol \\
&= \int_M(\nabla^{1,A}\eta, \nabla^{1,A}\eta)\Vol \\
&\geq 0.
\end{align*}
Therefore $\nabla^{1,A}\eta=0$.  However, $\Lambda^0L_0\oplus\Lambda^6L_0$ has no non-zero parallel sections, because the fibre $\kk_0$ of $L_0$ has no non-zero elements fixed by the action of the holonomy group $H$ of $A$ (and the holonomy group of $\nabla^1$ acting on $\Lambda^0\oplus\Lambda^6$ is trivial).

To establish the second identity we argue as above that any element of $\ker((D^{1/3,A})^2-4)\cap (\Omega^0L_1\oplus\Omega^6L_1)$ is parallel.  By the general holonomy principle the space of parallel sections of $\Lambda^0L_1$ is isomorphic to the fixed set of $H$ in $\kk_1$, which is the whole of $\kk_1$ by definition.  Similarly, the space of parallel sections of $\Lambda^6L_1$ is isomorphic to $\kk_1$.  The sum of these two spaces is naturally isomorphic to $\kk\otimes\CC$, with the almost complex structure given by multiplication with $\Vol$.

To establish the third identity we note that the connection $A$ fixes the subbundle $L_0\subset \Ad_\PPP$ and, by the first part of this proposition, $\ker((D^{1/3,A})^2-4) \cap \Gamma(SM\otimes L_0)=\ker((D^{1/3,A})^2-4) \cap \Omega^1L_0$ hence the space $\ker((D^{1/3,A})^2-4) \cap \Omega^1L_0$ is mapped into itself by the operator $D^{1/3,A}$.   Therefore
\[ \ker((D^{1/3,A})^2-4) \cap \Omega^1L_0 = \ker(D^{1/3,A}-2) \cap \Omega^1L_0\oplus \ker(D^{1/3,A}+2) \cap \Omega^1L_0. \]
Multiplication by the volume form defines a linear map from this vector space to itself.  This map squares to $-1$ so is an almost complex structure.  It also swaps the two summands on the right hand side because it anti-commutes with $D^{1/3,A}$.  Therefore the total space is isomorphic to the complexification of one of the two factors.
\end{proof}

The preceding proposition has important consequences in two particular cases.  Firstly, if the structure group $K$ is abelian then $\kk_0=0$ and $\ker((D^{1/3,A})^2-4)\cap\Omega^1 \Ad_\PPP$ is zero-dimensional, so the space of deformations of the instanton is a subspace of a zero-dimensional space and hence is zero-dimensional.  Thus:
\begin{theorem}\label{abelian rigid}
Any instanton on a principal bundle with abelian structure group is rigid.
\end{theorem}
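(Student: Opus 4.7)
The plan is to treat this as a direct consequence of Proposition \ref{complexification}, together with a single structural observation about abelian structure groups. First, by the spinorial reformulation of the linearised instanton equations established earlier in this section, infinitesimal deformations of $A$ modulo gauge are precisely the sections $\epsilon \in \Omega^1 \Ad_\PPP$ satisfying $D^{1/3,A}(\epsilon\cdot\psi) = 2\epsilon\cdot\psi$; in particular they lie in $\ker((D^{1/3,A})^2 - 4) \cap \Omega^1 \Ad_\PPP$, so it suffices to show the latter intersection is trivial.

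Next I would make the structural observation that when $K$ is abelian its adjoint representation on $\kk$ is trivial, and hence so is the restriction of that representation to any holonomy subgroup $H \subset K$. In the notation introduced just before Proposition \ref{complexification}, this forces $\kk_1 = \kk$ and $\kk_0 = 0$, so the subbundle $L_0$ of $\Ad_\PPP$ vanishes entirely and $\Ad_\PPP = L_1$.

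The last step is to apply part (i) of Proposition \ref{complexification}, which puts the whole of $\ker((D^{1/3,A})^2-4)$ inside $\Omega^1 L_0 \oplus \Omega^0 L_1 \oplus \Omega^6 L_1$. With $L_0 = 0$ this kernel is forced to lie in $\Omega^0 L_1 \oplus \Omega^6 L_1$, which has trivial intersection with $\Omega^1 \Ad_\PPP = \Omega^1 L_1$. The deformation space therefore vanishes and $A$ is rigid.

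There is no real analytic obstacle to highlight here: all the substantial work — the Schr\"odinger--Lichnerowicz identity of Proposition \ref{torsionful SL formula} and the integration-by-parts argument ruling out the $\Omega^1 L_1$ summand in the proof of Proposition \ref{complexification} — has already been carried out upstream. The only subtle point worth flagging is the recognition that abelianness of $K$ automatically forces the trivial holonomy action on $\kk$, irrespective of which subgroup $H \subset K$ is actually realised as the holonomy group; it is precisely this that makes the potentially dangerous $\Omega^1 L_0$ contribution vacuous in the abelian setting.
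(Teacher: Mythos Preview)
Your proposal is correct and follows essentially the same route as the paper: both deduce $\kk_0=0$ from abelianness of $K$ and then invoke Proposition~\ref{complexification} to conclude that $\ker((D^{1/3,A})^2-4)\cap\Omega^1\Ad_\PPP=0$. Your write-up is in fact a slightly more detailed unpacking of the paper's one-sentence argument.
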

Secondly, we recall that a connection on a principal bundle is called \emph{irreducible} if its holonomy group equals the structure group of the principal bundle.  If $A$ is an irreducible connection and the structure group of $\PPP$ is semisimple then $\kk_1=0$.  In this case the previous proposition implies that:
\begin{theorem}
The space of deformations of an irreducible instanton on a principal bundle with semisimple structure group is isomorphic to the kernel of the elliptic operator
\[ (D^{1/3,A}-2): \Gamma(SM\otimes \Ad_\PPP)\to\Gamma(SM\otimes \Ad_\PPP). \]
\end{theorem}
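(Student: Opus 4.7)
The plan is that this theorem should follow almost immediately from Proposition \ref{complexification} once we translate both sides into the same language. First, by the earlier proposition identifying solutions of the gauge-fixed linearised instanton equations with eigenmodes of the Dirac operator, the space of deformations of $A$ is precisely the image of
\[ \{\epsilon \in \Omega^1(\Ad_\PPP) : D^{1/3,A}(\epsilon\cdot\psi) = 2\epsilon\cdot\psi\} \]
under the injective Clifford multiplication map $\epsilon \mapsto \epsilon\cdot\psi$. Since this map lands in the $\Lambda^1 M$ summand of $SM$ under the isomorphism \eqref{spinor isomorphism}, the image is exactly $\ker(D^{1/3,A}-2) \cap \Omega^1(\Ad_\PPP)$. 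So the content of the theorem is that under the stated hypotheses, no nonzero element of $\ker(D^{1/3,A}-2)$ lives in the $(\Lambda^0 M \oplus \Lambda^6 M)\otimes\Ad_\PPP$ part of $SM\otimes\Ad_\PPP$.

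Next I would unpack the hypotheses. Irreducibility says $H=K$, so $\kk_1\subset\kk$ is the subspace fixed by the adjoint action of all of $K$, which for a semisimple Lie algebra coincides with the centre and hence is zero. Therefore $L_1=0$ and $\Ad_\PPP = L_0$. With $L_1=0$, part (i) of Proposition \ref{complexification} reads
\[ \ker((D^{1/3,A})^2-4) = \ker((D^{1/3,A})^2-4)\cap \Omega^1 L_0 = \ker((D^{1/3,A})^2-4)\cap\Omega^1(\Ad_\PPP). \]
Because $\ker(D^{1/3,A}-2)\subseteq \ker((D^{1/3,A})^2-4)$, every element of $\ker(D^{1/3,A}-2)$ automatically lies in $\Omega^1(\Ad_\PPP)$, which combined with the first paragraph gives the desired isomorphism. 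Ellipticity of $D^{1/3,A}-2$ is standard, since shifting a Dirac operator by a bundle endomorphism does not affect its symbol.

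There is essentially no obstacle here: all the analytical work was done in establishing the torsionful Schr\"odinger--Lichnerowicz formula and in Proposition \ref{complexification}. The only mildly non-trivial point to verify is the representation-theoretic statement $\kk_1=0$, and for this one merely needs that a semisimple Lie algebra has trivial centre, i.e.\ no nonzero element commutes with all of $\kk$.
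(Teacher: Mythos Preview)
Your proposal is correct and follows essentially the same route as the paper: the paper's proof consists solely of the observation that irreducibility together with semisimplicity forces $\kk_1=0$, after which Proposition \ref{complexification} is invoked. You have simply made explicit the inclusion $\ker(D^{1/3,A}-2)\subseteq\ker((D^{1/3,A})^2-4)$ and the application of part (i) that the paper leaves to the reader.
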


We end this section with some comments on a geometrical interpretation of the eigenspace of $(D^{1/3,A})^2$ acting on $\Omega^1\Ad_\PPP$ with eigenvalue 4.  Given any orthogonal connection $\nabla$ on the tangent bundle and any connection $A$ on a principal bundle with curvature $F$, the \emph{$\nabla$-Yang--Mills equation} for $A$ is
 \[ (\nabla^A)^\ast F = 0. \]
When $\nabla$ is the Levi-Civita connection, this equation is just the usual Yang--Mills equation.  The instanton equation on a nearly K\"ahler six-manifold implies the $\nabla^t$-Yang--Mills equation for any $t\in\RR$.  Indeed, the term involving the torsion is proportional to $F\lrcorner P$, and  vanishes as $F$ is a (1,1)-form while $P$ is the real part of a (3,0)-form.

The Yang--Mills equation for the canonical connection $\nabla^1$,
 \[ (\dd^A)^\ast F + F\lrcorner P = 0, \]
linearises to
\begin{equation}\label{TYM} (\dd^A)^\ast\dd^A\epsilon - F\llcorner\epsilon + \dd^A\epsilon\lrcorner P = 0, \end{equation}
with $\epsilon$ a section of $\Ad_\PPP\otimes T^\ast M$.  The next proposition proves an identity which relates this equation to the operator $(D^{1/3,A})^2$.
\begin{proposition}
 Let $EM$ be the vector bundle obtained from a $K$-principal bundle $\PPP$ over a nearly K\"ahler six-manifold through some representation $E$ of $K$ and let $\epsilon\in\Gamma(EM\otimes T^\ast M)$.  Let $A$ be any connection on $\PPP$.  Then
 \[ (D^{1/3,A})^2(\epsilon\cdot\psi)-4\epsilon\cdot\psi = \left( \dd^A(\dd^A)^\ast\epsilon + (\dd^A)^\ast\dd^A\epsilon - F\llcorner\epsilon + \dd^A\epsilon\lrcorner P\right)\cdot\psi. \]
\end{proposition}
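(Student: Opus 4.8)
The plan is to evaluate $(D^{1/3,A})^2(\epsilon\cdot\psi)$ by applying $D^{1/3,A}$ twice; an alternative would be to set $t=1$ in Proposition~\ref{torsionful SL formula}, use $\nabla^1\psi=0$ and Lemma~\ref{evals PQ}, and then invoke a Weitzenb\"ock identity for the canonical connection (together with $\dd^\ast P=0$, the $\nabla^1$-parallelism of $P$, and $\Ric^1=4g$ from Proposition~\ref{prop:Ricci}), but the direct computation is more self-contained. Recall (from the analysis of the perturbation equations above) that
\[
D^{1/3,A}(\epsilon\cdot\psi)=\bigl(\dd^A\epsilon+(\dd^A)^\ast\epsilon+2\epsilon\bigr)\cdot\psi .
\]
Since $\dd^A\epsilon$ is an $EM$-valued two-form, $(\dd^A)^\ast\epsilon$ a zero-form and $\epsilon$ a one-form, it suffices to understand $D^{1/3,A}$ on $\zeta\cdot\psi$ for $\zeta$ of each degree. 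For this I would use the degree-$p$ analogue of the identity used in that argument,
\[
D^{0,A}(\zeta\cdot\psi)=\bigl(\dd^A\zeta+(\dd^A)^\ast\zeta\bigr)\cdot\psi+\tfrac12\sum_a e^a\cdot\zeta\cdot e^a\cdot\psi ,
\]
which follows immediately from the Killing spinor equation, together with $D^{1/3,A}=D^{0,A}+\tfrac14 P$, the Clifford identity $\sum_a e^a\cdot\zeta\cdot e^a=(-1)^{p+1}(6-2p)\zeta$ valid in six dimensions, and Lemma~\ref{evals PQ} (Clifford multiplication by $P$ acts as $4$, $0$, $-4$ on $\Lambda^0M$, $\Lambda^1M$, $\Lambda^6M\subset SM$).

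Carrying this out gives $D^{1/3,A}\bigl((\dd^A)^\ast\epsilon\cdot\psi\bigr)=\bigl(\dd^A(\dd^A)^\ast\epsilon\bigr)\cdot\psi-2(\dd^A)^\ast\epsilon\cdot\psi$ for the zero-form piece, reproduces the formula above for the one-form piece, and gives $D^{1/3,A}\bigl(\dd^A\epsilon\cdot\psi\bigr)=\bigl(F\wedge\epsilon+(\dd^A)^\ast\dd^A\epsilon\bigr)\cdot\psi-\dd^A\epsilon\cdot\psi+\tfrac14 P\cdot\dd^A\epsilon\cdot\psi$ for the two-form piece, using $\dd^A\dd^A\epsilon=F\wedge\epsilon$ (with $F$ acting on the values of $\epsilon$). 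Adding these three contributions with weights $1$, $1$, $2$, the terms proportional to $(\dd^A)^\ast\epsilon\cdot\psi$ cancel and one obtains
\[
(D^{1/3,A})^2(\epsilon\cdot\psi)-4\epsilon\cdot\psi=\bigl(\dd^A(\dd^A)^\ast\epsilon+(\dd^A)^\ast\dd^A\epsilon+\dd^A\epsilon\bigr)\cdot\psi+(F\wedge\epsilon)\cdot\psi+\tfrac14 P\cdot\dd^A\epsilon\cdot\psi .
\]
Thus the proposition is reduced to the algebraic identity $\bigl(\dd^A\epsilon+\tfrac14 P\cdot\dd^A\epsilon+F\wedge\epsilon+F\llcorner\epsilon-\dd^A\epsilon\lrcorner P\bigr)\cdot\psi=0$.

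This last identity is where the real work lies. I would prove it by decomposing the $EM$-valued two-form $\beta:=\dd^A\epsilon$ along the $SU(3)$-splitting $\Lambda^2M=\su(3)M\oplus\RR\omega\oplus[[\Lambda^{2,0}]]M$ and the three-form $F\wedge\epsilon$ along the corresponding splitting of $\Lambda^3M$. Clifford multiplication by $\psi$ annihilates $\su(3)M$ and the primitive part of $\Lambda^3M$, carries $\RR\omega$ and the $P$- and $\ast P$-lines of $\Lambda^3M$ into $\Lambda^0M\oplus\Lambda^6M$, and restricts to $SU(3)$-equivariant isomorphisms of $[[\Lambda^{2,0}]]M$ and of the $\omega\wedge\Lambda^1M$-summand of $\Lambda^3M$ onto $\Lambda^1M\subset SM$; using these facts and the eigenvalues of Lemma~\ref{evals PQ} one checks that $\beta\cdot\psi+\tfrac14 P\cdot\beta\cdot\psi$ is the image of the $[[\Lambda^{2,0}]]$-component of $\beta$, which by Schur's lemma coincides with $(\beta\lrcorner P)\cdot\psi$ up to a universal constant fixed by one test computation, while $(F\wedge\epsilon)\cdot\psi$ recombines with the remaining terms to give $-(F\llcorner\epsilon)\cdot\psi$. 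The main obstacle is precisely this representation-theoretic reassembly of the two- and three-form contributions into the claimed one-form expression, and the attendant bookkeeping of Clifford sign conventions; I expect identifying the $\omega\wedge\Lambda^1$- and $[[\Lambda^{2,0}]]$-pieces with the contraction operators $\lrcorner P$ and $\llcorner$ to be the most delicate point.
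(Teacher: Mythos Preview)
Your ``alternative'' is precisely the route the paper takes, and it is much shorter than the direct computation you chose to pursue.  The paper does not apply $D^{1/3,A}$ twice.  Instead it starts from the $t=1$ Schr\"odinger--Lichnerowicz formula~\eqref{SLNK2}, which already expresses $(D^{1/3,A})^2(\epsilon\cdot\psi)$ in terms of $(\nabla^{1,A})^\ast\nabla^{1,A}\epsilon$, and then eliminates that rough Laplacian using two standard one-form identities: the ordinary Weitzenb\"ock formula
\[
(\dd^A)^\ast\dd^A\epsilon+\dd^A(\dd^A)^\ast\epsilon=(\nabla^{0,A})^\ast\nabla^{0,A}\epsilon+\Ric^0\,\epsilon-F\llcorner\epsilon,
\]
together with the comparison $(\nabla^{1,A})^\ast\nabla^{1,A}\epsilon=(\nabla^{0,A})^\ast\nabla^{0,A}\epsilon+\dd^A\epsilon\lrcorner P+\epsilon$ between the Levi-Civita and canonical rough Laplacians on one-forms.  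Substituting one into the other and then into~\eqref{SLNK2} gives the result in three lines.  The terms $\dd^A\epsilon\lrcorner P$ and $F\llcorner\epsilon$ simply fall out of these two formulae; no $\SU(3)$-decomposition of $\Lambda^2$ or $\Lambda^3$ and no further Clifford-algebra identity is needed.

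Your direct approach is not wrong, but it shifts all of the difficulty into the final ``algebraic identity''.  Since $\dd^A\epsilon$ and the pair $(F,\epsilon)$ are independent data, that identity splits into two separate statements: $(F\wedge\epsilon+F\llcorner\epsilon)\cdot\psi=F\cdot\epsilon\cdot\psi=0$, which needs the instanton equation (exactly as the paper does implicitly by invoking~\eqref{SLNK2}); and $(\beta+\tfrac14 P\cdot\beta)\cdot\psi=(\beta\lrcorner P)\cdot\psi$ for every two-form $\beta$.  The latter is true---the $\Lambda^0$-component of $\beta\cdot\psi$ vanishes by skew-adjointness of two-forms, the $\Lambda^6$-component is killed by $1+\tfrac14 P$, and the $\Lambda^1$-components can be matched---but proving it rigorously is the ``delicate point'' you anticipated.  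The paper's approach buys you exactly the avoidance of this step.
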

\begin{proof}
 The Weitzenb\"ock identity states that
 \[ \left( (\dd^A)^\ast\dd^A + \dd^A(\dd^A)^\ast\right) \epsilon = (\nabla^{0,A})^\ast\nabla^{0,A}\epsilon + \Ric^0\epsilon - F\llcorner\epsilon. \]
 With our normalisation conventions, $\Ric^0$ is equal to 5 times the identity.  The Laplacian $(\nabla^{0,A})^\ast\nabla^{0,A}$ is related to the Laplacian $(\nabla^{1,A})^\ast\nabla^{1,A}$ that appears in Equation~\eqref{SLNK2} as follows:
 \[ (\nabla^{1,A})^\ast\nabla^{1,A}\epsilon = (\nabla^{0,A})^\ast\nabla^{0,A}\epsilon + \dd^A\epsilon\lrcorner P+\epsilon. \]
 Combining the above two identities with Equation~\eqref{SLNK2} yields the desired result.
\end{proof}

Suppose now that $A$ is an instanton and $\epsilon$ is a section of $\Lambda^1M\otimes\Ad_\PPP$ which solves the linearised torsionful Yang--Mills Equation~\eqref{TYM} and the gauge-fixing condition $(\dd^A)^\ast\epsilon=0$.  Then by the previous proposition $(D^{1/3,A})^2\epsilon\cdot\psi=4\epsilon\cdot\psi$.  Conversely, if $(D^{1/3,A})^2\epsilon\cdot\psi=4\epsilon\cdot\psi$ then by Proposition \ref{complexification} we may write $\epsilon=\epsilon_+ +\epsilon_-$ with $D^{1/3,A}\epsilon_{\pm}\cdot\psi = \pm2\epsilon\cdot\psi$.  One can easily check that both $\epsilon_+$ and $\epsilon_-$ satisfy $(\dd^A)^\ast\epsilon=0$, and hence by the previous proposition both solve Equation~\eqref{TYM}.  Thus we have proved:
\begin{proposition}
 Let $A$ be an instanton on a principal bundle $\PPP$ over a nearly K\"ahler six-manifold and let $\epsilon\in\Gamma(\Ad_\PPP\otimes T^\ast M)$.  Then $\epsilon$ satisfies the linearised torsionful Yang--Mills Equation~\eqref{TYM} and the gauge-fixing condition $(\dd^A)^\ast\epsilon=0$ if and only if $(D^{1/3,A})^2\epsilon\cdot\psi=4\epsilon\cdot\psi$.
\end{proposition}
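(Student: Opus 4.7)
The forward implication is immediate from the identity established in the preceding proposition: under the gauge condition $(\dd^A)^\ast\epsilon=0$ and \eqref{TYM}, the term $\dd^A(\dd^A)^\ast\epsilon$ vanishes, the remaining three terms on the right-hand side sum to zero, and consequently $(D^{1/3,A})^2(\epsilon\cdot\psi)-4\epsilon\cdot\psi=0$.

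For the converse I would emulate the decomposition argument sketched in the paragraph just before the statement. Given $(D^{1/3,A})^2(\epsilon\cdot\psi)=4\epsilon\cdot\psi$, invoke Proposition~\ref{complexification} with any $H$-invariant splitting $\kk=\kk_0\oplus\kk_1$ (which exists whenever the structure group admits an invariant inner product, as is standard for compact groups). Part (i) forces $\epsilon\cdot\psi$ to lie in $\Omega^1 L_0$, and part (iii) supplies a decomposition $\epsilon\cdot\psi=\epsilon_+\cdot\psi+\epsilon_-\cdot\psi$ with $\epsilon_\pm\in\Gamma(L_0\otimes T^\ast M)$ and $D^{1/3,A}(\epsilon_\pm\cdot\psi)=\pm 2\epsilon_\pm\cdot\psi$. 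Substituting each $\epsilon_\pm$ into the formula $D^{1/3,A}(\epsilon'\cdot\psi)=(\dd^A\epsilon'+(\dd^A)^\ast\epsilon'+2\epsilon')\cdot\psi$ derived earlier in the section and isolating the $\Lambda^0$-component of $SM\otimes\Ad_\PPP$---which is populated only by $(\dd^A)^\ast\epsilon_\pm\cdot\psi$---gives $(\dd^A)^\ast\epsilon_\pm=0$ for both signs; summing yields the gauge condition $(\dd^A)^\ast\epsilon=0$.

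Feeding this gauge condition back into the preceding proposition's identity kills the term $\dd^A(\dd^A)^\ast\epsilon$, and injectivity of Clifford multiplication by one-forms on $\psi$ reduces what remains to \eqref{TYM}. The only delicate point is that the $\pm 2$-eigenspace decomposition of $\epsilon\cdot\psi$ must stay inside $\Omega^1\Ad_\PPP$ rather than leak into the complementary summands $\Omega^0\Ad_\PPP$ or $\Omega^6\Ad_\PPP$ of $SM\otimes\Ad_\PPP$; this is precisely the content of Proposition~\ref{complexification}(i), which is why that proposition does the heavy lifting here rather than being reproved from scratch.
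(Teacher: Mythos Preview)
Your proof is correct and follows essentially the same route as the paper: the forward direction is immediate from the preceding identity, and for the converse you invoke Proposition~\ref{complexification} to split $\epsilon=\epsilon_++\epsilon_-$ into $\pm2$-eigenvectors of $D^{1/3,A}$, then extract $(\dd^A)^\ast\epsilon_\pm=0$ and feed back into the identity. You have in fact supplied the detail the paper elides when it writes ``one can easily check that both $\epsilon_+$ and $\epsilon_-$ satisfy $(\dd^A)^\ast\epsilon=0$'', namely the projection onto the $\Lambda^0$-component of $SM\otimes\Ad_\PPP$.
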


\section{Instantons on homogeneous nearly K\"ahler manifolds}
\label{sec:insthomNK}

There are precisely four homogeneous nearly K\"ahler six-manifolds:
\begin{align*}
	S^6 &= G_2/\SU(3),& S^3\times S^3 &= \SU(2)^3/\SU(2),\\
	\CC P^3&=\Sp(2)/\Sp(1)\times\U(1), & F_{1,2,3}&=\SU(3)/\U(1)^2.
\end{align*}
In all four cases, the nearly K\"ahler metric on $G/H$ is induced from a multiple of the Cartan--Killing form on the Lie algebra $\gg$ of $G$.  In particular, the metric normalised as in Section \ref{sec:geomNK} is induced from the positive symmetric bilinear form \cite{MS10},
\[ B(X,Y) = -\frac{1}{12}\Tr_\gg(\ad(X)\ad(Y))\quad\forall X,Y \in \gg. \]

Let $\mm$ denote the orthogonal complement with respect to $B$ of the Lie algebra $\hh$ of $H$.  The subspace $\mm$ is invariant under the adjoint action of $H$, making the homogeneous space reductive.  The tangent and cotangent bundles of $G/H$ may be identified with the bundles associated to the $H$-principal bundle $G\to G/H$ via the natural representations
\[ \rho_\mm\colon H\to GL(\mm),\quad \rho_{\mm*}\colon H\to GL(\mm^*). \]

The canonical connection on the $H$-principal bundle $G\to G/H$ is by definition the $\hh$-valued part of the left-invariant Maurer--Cartan form on $G$.  The curvature of this connection is $G$-invariant, and may be identified with the $H$-invariant element of $\Lambda^2\mm^\ast \otimes \hh$ given by
\begin{equation}
\label{CC curvature}
 F(X,Y) = - \pi_{\hh}([X,Y]) \quad \forall X,Y\in\mm
\end{equation}
(here $\pi_\hh$ denotes projection onto $\hh$).  The canonical connection on the principal bundle induces a connection on the tangent bundle, and it is well-known
 that this connection coincides with the canonical connection associated with the nearly K\"ahler structure. 
 To verify this fact, it suffices to verify that the holonomy is contained in $\SU(3)$ and that the torsion is skew-symmetric, and then appeal to \cite[Thm 10.1]{Friedrich-Ivanov-skewtorsion}.  Alternatively, one could further verify that the torsion is parallel and appeal to \cite[Thm 4.2]{Agricola:SrniLectures}. 
  In particular, this connection is an instanton.

The four nearly K\"ahler coset spaces therefore provide an ideal testing ground for the study of nearly K\"ahler instanton deformations.  There are in fact two natural deformation problems to consider, as the canonical connection provides a connection on both the $H$-principal bundle $G \to G/H$ and the $\SU(3)$-principal bundle associated with the $\SU(3)$-structure.  The remainder of this article is devoted to answering the following two questions on the four nearly K\"ahler coset spaces $G/H$:
\begin{enumerate}
 \item Does the canonical connection admit any deformations as an instanton with gauge group contained in $H$?
 \item Does the canonical connection admit any deformations as an instanton with gauge group contained in $\SU(3)$?
\end{enumerate}
To answer these questions, we solve the equation $D^{1/3}(\epsilon\cdot\psi)=2\epsilon\cdot\psi$ for  a section  $\epsilon$ of the bundle associated to the $H$-representation $\hh\otimes\mm^\ast$ in the first case and $\su(3)\otimes\mm^\ast$ in the second case.  Note that a positive answer to the first question implies a positive answer to the second.  For the case of $\SU(3)/\U(1)^2$, the first question has already been answered in the negative in Theorem \ref{abelian rigid} using a simple positivity argument.  In the remainder of this section we show that the same argument is inapplicable for the remaining three coset spaces, and in the following section complete answers are derived using group-theoretical analysis.

To this end, we first present a formula for the $F$-dependent term in Equation~\eqref{SLNK} in terms of a Casimir operator for $\hh$.  We define $\Cas_\hh\in\Sym^2(\hh)$ to be the inverse of the metric on $\hh$ obtained by restriction of $B$.  If $I_1,\ldots,I_{\dim(H)}$ is an orthonormal basis for $\hh$ then
\[ \Cas_\hh = \sum_{i=1}^{\dim(H)} I_i\otimes I_i. \]
If $\rho$ is any representation of $H$ we write $\rho(\Cas_{\hh}) = \sum_{i=1}^{\dim(H)}\rho(I_i)\rho(I_i)$.  (Here and throughout we denote by the same symbol representations of a Lie group and its Lie algebra).
\begin{lemma}\label{casimir curvature}
 Let $(E,\rho_E)$ be any representation of $H$.  Let $F\in \Lambda^2\mm^\ast \otimes \hh$ be as in Equation~\eqref{CC curvature} and let $\epsilon\in \mm^\ast\otimes E$.  Then
 \begin{equation}
 	 -2 F \llcorner \epsilon = (\rho_{\mm^\ast}(\Cas_\hh)\otimes 1_E + 1_{\mm^\ast}\otimes \rho_E(\Cas_\hh) - \rho_{\mm^\ast\otimes E} (\Cas_\hh) ) \epsilon.\label{eqn:casimir curvature}
 \end{equation}
\end{lemma}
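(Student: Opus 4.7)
The plan is to reduce both sides of \eqref{eqn:casimir curvature} to the common expression $-2\sum_i \rho_{\mm^*}(I_i)\otimes\rho_E(I_i)$ applied to $\epsilon$, and then to match them using the $\mathrm{Ad}$-invariance of $B$.

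The right-hand side succumbs first to a standard fact about Casimirs on tensor products. Because $\hh$ acts on $V \otimes W$ by the Leibniz rule $\rho_{V\otimes W}(X) = \rho_V(X)\otimes 1 + 1\otimes \rho_W(X)$, expanding the square and summing over an orthonormal basis $\{I_i\}$ of $\hh$ yields
\[
\rho_{V\otimes W}(\Cas_\hh) = \rho_V(\Cas_\hh)\otimes 1_W + 1_V\otimes \rho_W(\Cas_\hh) + 2\sum_i \rho_V(I_i)\otimes\rho_W(I_i).
\]
Taking $V = \mm^*$ and $W = E$ shows that the right-hand side of \eqref{eqn:casimir curvature} simplifies to $-2\sum_i (\rho_{\mm^*}(I_i)\otimes\rho_E(I_i))\epsilon$, so the problem reduces to establishing $F\llcorner\epsilon = \sum_i (\rho_{\mm^*}(I_i)\otimes\rho_E(I_i))\epsilon$.

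For the left-hand side, I would fix a $B$-orthonormal basis $\{e_a\}$ of $\mm$ with dual basis $\{e^a\}$ of $\mm^*$, write $\epsilon = \sum_a e^a\otimes\epsilon_a$ with $\epsilon_a\in E$, and unwind the convention $(F\llcorner\epsilon)_c = -\sum_a \rho_E(F(e_a,e_c))\epsilon_a$ fixed by the earlier Clifford identity $F\cdot\epsilon\cdot\psi = -2(F\llcorner\epsilon)\cdot\psi$. Substituting $F(e_a,e_c) = -\pi_\hh[e_a,e_c] = -\sum_i B(I_i,[e_a,e_c])I_i$ gives
\[
(F\llcorner\epsilon)_c = \sum_{a,i} B(I_i,[e_a,e_c])\,\rho_E(I_i)\epsilon_a.
\]
On the other hand, using the orthonormal pairing $e^a(X) = B(e_a,X)$ for $X\in\mm$ together with $\mathrm{Ad}$-invariance $B([I_i,e_a],e_b) = -B(e_a,[I_i,e_b])$, one finds
\[
(\rho_{\mm^*}(I_i)e^a)(e_b) = -B(e_a,[I_i,e_b]) = B(I_i,[e_a,e_b]),
\]
so that $\rho_{\mm^*}(I_i)e^a = \sum_b B(I_i,[e_a,e_b])e^b$. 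The $c$-th component of $\sum_i (\rho_{\mm^*}(I_i)\otimes\rho_E(I_i))\epsilon$ therefore equals $\sum_{i,a} B(I_i,[e_a,e_c])\rho_E(I_i)\epsilon_a$, which is exactly $(F\llcorner\epsilon)_c$.

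The chief obstacle is purely bookkeeping: tracking signs so that the definition of $F\llcorner\epsilon$ is consistent both with the contraction conventions used throughout Section~\ref{sec:instdef} and with the identity $F\cdot\epsilon\cdot\psi = -2(F\llcorner\epsilon)\cdot\psi$. Once a consistent convention is in place, the entire content of the lemma is the tensor-product Casimir identity combined with a single application of the $\mathrm{Ad}$-invariance of $B$.
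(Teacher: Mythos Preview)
Your proposal is correct and follows essentially the same approach as the paper: both arguments reduce each side to $-2\sum_i \rho_{\mm^*}(I_i)\otimes\rho_E(I_i)$, using the tensor-product Casimir identity on one side and a direct computation of $F\llcorner\epsilon$ in an orthonormal basis on the other, with the key input being the $\Ad$-invariance of $B$ (the paper phrases this as the structure-constant identity $f^i_{ab}=f^b_{ia}$, which is exactly your step $-B(e_a,[I_i,e_b])=B(I_i,[e_a,e_b])$). The only difference is presentational---the paper works with structure constants throughout and computes the contraction $(e^a\wedge e^b)\llcorner e^c$ explicitly rather than fixing the sign of $F\llcorner\epsilon$ via the Clifford identity---but the content is the same.
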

\begin{proof}
 Pick orthonormal bases $I_i$ with $i\in\{1,\ldots,\dim(H)\}$ for $\hh$ and $I_a$ with $a\in\{\dim(H)+1,\ldots,\dim(H)+6\}$ for $\mm$. To simplify notation, we use indices $i,j,k\in\{1,\ldots,\dim(H)\}$ and $a,b,c\in\{\dim(H)+1,\ldots,\dim(H)+6\}$.  Let $e^a$ be the basis for $\mm^\ast$ dual to $I_a$.  The structure constants $f_{ij}^k,f_{ia}^b,f_{ab}^i,f_{ab}^c$ are defined by the formulae
 \[ [I_i,I_j]=f_{ij}^kI_k,\quad [I_i,I_a] = f_{ia}^b I_b,\quad [I_a,I_b] = f_{ab}^i I_i+ f_{ab}^c I_c. \]
 Then
 \[ \rho_{\mm}(I_i)I_a = f_{ia}^b I_b ,\]
 and
 \[ \rho_{\mm^\ast}(I_i)e^a = -f_{ib}^a e^b .\]
 The expression for $F$ in components is $F=-\frac12 f^i_{ab} e^a\wedge e^b I_i$.  Let us write $\epsilon=e^a\otimes\epsilon_a$, with $\epsilon_a\in E$.  Then
 \begin{align*}
  -2F\llcorner \epsilon &= f^i_{ab}(e^a\wedge e^b)\llcorner e^c\otimes\rho_E(I_i) \epsilon_c \\
                        &= 2f^i_{ab}e^a\otimes\rho_E(I_i) \epsilon_b \\
                        &= 2f^b_{ia}e^a\otimes\rho_E(I_i) \epsilon_b \\
                        &= -2\rho_{\mm^\ast} (I_i) \otimes \rho_E(I_i) \epsilon.
 \end{align*}
 Now $\rho_{\mm^\ast\otimes E}(I_i) = \rho_{\mm^\ast}(I_i)\otimes 1_E + 1_{\mm^\ast}\otimes \rho_E(I_i)$, so
 \[ \rho_{\mm^\ast\otimes E}(\Cas_\hh) = \rho_{\mm^\ast}(\Cas_\hh)\otimes 1_E + 1_{\mm^\ast}\otimes \rho_E(\Cas_\hh) + 2\rho_{\mm^\ast} (I_i) \otimes \rho_E(I_i). \]
 The result follows.
\end{proof}

For the positivity argument used in Theorem \ref{abelian rigid} to be applied to any other instanton, it is necessary that the curvature term in Equation \eqref{SLNK2} is greater than $-4$.  The following proposition shows that this condition does not hold for any of the homogeneous nearly K\"ahler manifolds other than $\SU(3)/\U(1)^2$.

\begin{proposition}
 \label{curvature eigenvalues}
 Let $M=G/H$ be a homogeneous nearly K\"ahler manifold and let $A$ be the canonical connection on $TM$.  Then the operator $\epsilon\mapsto -2F\llcorner\epsilon$ on $\mm^\ast\otimes \hh$ has the following eigenvalues and eigenspace dimensions:
 \begin{itemize}

  \item case $G_2/\SU(3)$:
  \begin{center}\begin{tabular}{c|ccc}
    eigenvalue & $-9$ & $-3$ & $3$ \\ \hline
    dimension & $6$ & $12$ & $30$
  \end{tabular}\end{center}

  \item case $\SU(2)^3/\SU(2)$:
  \begin{center}\begin{tabular}{c|ccc}
    eigenvalue & $-8$ & $-4$ & $4$ \\ \hline
    dimension & $2$ & $6 $& $10$
  \end{tabular}\end{center}

  \item case $\Sp(2)/\Sp(1)\times\U(1)$:
  \begin{center}\begin{tabular}{c|ccc}
    eigenvalue & $-8$ & $0$ & $4$ \\ \hline
    dimension & $4$ & $12$ & $8$
  \end{tabular}\end{center}
 \end{itemize}
\end{proposition}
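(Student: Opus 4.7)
The plan is to apply Lemma~\ref{casimir curvature} with $E=\hh$ (the adjoint representation of $H$); this expresses $-2F\llcorner$ on $\mm^\ast \otimes \hh$ as a combination of three Casimir operators,
\[
\rho_{\mm^\ast}(\Cas_\hh)\otimes 1 + 1 \otimes \ad(\Cas_\hh) - \rho_{\mm^\ast \otimes \hh}(\Cas_\hh).
\]
All three operators are $H$-equivariant, so by Schur's lemma each acts as a scalar on every irreducible $H$-isotypic component of $\mm^\ast \otimes \hh$; the eigenvalue of $-2F\llcorner$ on an irreducible summand $V\subset W\otimes U$ (with $W\subset \mm^\ast$ and $U\subset \hh$ irreducible) is then simply $c_W + c_U - c_V$, where the $c_\bullet$ are the relevant Casimir scalars.

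The first concrete task is to decompose $\mm^\ast$, $\hh$, and $\mm^\ast \otimes \hh$ into irreducible $H$-modules in each case. For $G_2/\SU(3)$, $\mm^\ast_\CC \cong \mathbf{3}\oplus\overline{\mathbf{3}}$ and $\hh_\CC = \mathbf{8}$, with the Littlewood--Richardson decomposition $\mathbf{3}\otimes\mathbf{8}\cong \mathbf{15}\oplus\overline{\mathbf{6}}\oplus\mathbf{3}$ giving three real-irreducible summands of dimensions $6$, $12$, $30$. For $\SU(2)^3/\SU(2)_{\mathrm{diag}}$, the complement decomposes as $\mm \cong 2\,\mathbf{3}$ and $\hh \cong \mathbf{3}$, so $\mm^\ast\otimes\hh\cong 2(\mathbf{5}\oplus\mathbf{3}\oplus\mathbf{1})$ via Clebsch--Gordan. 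For $\Sp(2)/\Sp(1)\times\U(1)$, I would read off the $\Sp(1)\times\U(1)$-weights from the root-space decomposition of $\sp(2,\CC)$, obtaining $\mm^\ast_\CC\cong \mathbf{1}_{+2}\oplus\mathbf{1}_{-2}\oplus\mathbf{2}_{+1}\oplus\mathbf{2}_{-1}$ where subscripts are $\U(1)$-charges, and expand the tensor product with $\hh_\CC = \mathbf{3}_0\oplus\mathbf{1}_0$ using $\mathbf{2}\otimes\mathbf{3}\cong \mathbf{4}\oplus\mathbf{2}$. The Casimir scalars $c_W$ are then computed by Freudenthal's formula $\langle\lambda_W, \lambda_W+2\delta\rangle$, with the inner product taken to be the one induced by $B|_\hh$.

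The principal obstacle is the bookkeeping of normalisations: the form $B|_\hh$ is not generally equal to the standard Killing form of $\hh$, but differs from it by the Dynkin index of the embedding $\hh \hookrightarrow \gg$ on each simple factor, and by a separate scalar on any $\uu(1)$ summand. This is particularly delicate in the $\Sp(2)/\Sp(1)\times\U(1)$ case, where the $\sp(1)$ and $\uu(1)$ factors carry independent normalisations determined by the specific generators that appear in the root-space decomposition. Once these constants have been pinned down (for instance by evaluating $B$ on a few explicit generators), the eigenvalues $c_W+c_U-c_V$ tabulate mechanically, and the sum of multiplicities within each table should equal $6\dim\hh$, providing a natural arithmetic cross-check against the stated tables.
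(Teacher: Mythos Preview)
Your proposal is correct and follows essentially the same route as the paper: apply Lemma~\ref{casimir curvature} with $E=\hh$, decompose $\mm^\ast_\CC$, $\hh_\CC$, and their tensor product into $H$-irreducibles exactly as you describe, and compute the Casimir scalars via Freudenthal's formula with the inner product induced by $B|_\hh$ (the paper pins down the normalisations by evaluating $B$ on explicit Cartan generators, just as you suggest). Your dimension cross-check $\sum=6\dim\hh$ is a nice addition not made explicit in the paper.
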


\begin{proof}
 In all cases the operator is evaluated using the Casimir expression given in Lemma \ref{casimir curvature}, with $E=\hh$ or an irreducible subspace thereof.
 We use the Freudenthal formula for $\Cas_\hh$, which states that (for any Lie algebra $\hh$)
 \begin{equation}
 \label{freudenthal}
 \rho_\lambda (\Cas_{\hh}) = B(\lambda,\lambda) + 2 B(\lambda,\delta)
 \end{equation}
 in the irreducible representation with highest weight $\lambda$, with $\delta$ equal to half of the sum of the positive roots of $\hh$ (see for instance \cite[p.~122]{Humphreys}).

\medskip
\noindent\textbf{Case: $G_2/\SU(3)$}
\medskip\newline
We first choose the Cartan subalgebra of $\su(3)$ consisting of diagonal matrices and call a weight positive if it evaluates to a positive quantity on $\diag(1,0,-1)$.  The matrices $H_1=\diag(1,-1,0)$ and $H_2=\diag(0,1,-1)$ are dual to the fundamental weights $\lambda_1$ and $\lambda_2$.  A word of caution is advisable here and as one computes the similar quantity in the other cases: $\hh$ is \emph{not} a Cartan subalgebra of $\gg$, it is the Lie algebra of $H$ and $B$ is \emph{not} $(-\sfrac1{12})^{\mathrm{th}}$ of the Cartan--Killing form of $\hh$: the trace is taken over all of $\gg$. One may verify by direct calculation that
\begin{equation}\label{eqn:Bsu3}
	\vect{B(H_1,H_1) & B(H_1,H_2)\\ B(H_2,H_1)& B(H_2,H_2)}=\vect{ -4/3 &  \phantom{-}2/3 \\  \phantom{-}2/3& -4/3}.
\end{equation}
Hints are provided in Appendix \ref{sec:_g_2}.    It follows that 
 \[\vect{B(\lambda_1,\lambda_1) & B(\lambda_1,\lambda_2)\\ B(\lambda_2,\lambda_1)& B(\lambda_2,\lambda_2)}=\vect{ -1 & -1/2 \\  -1/2& -1}.\]
  One finds that $\delta=\lambda_1+\lambda_2$.  Therefore, in the complex representation $(V_{(m_1,m_2)},\rho_{(m_1,m_2)})$ with highest weight $m_1\lambda_1+m_2\lambda_2$ one finds that
 \begin{equation}
 	\label{eqn:CasimirSU3}  \rho_{(m_1,m_2)}(\Cas_\hh) = -(m_1^2+m_2^2+m_1m_2+3m_1+3m_2).
 \end{equation}

The representations that appear in Equation \eqref{eqn:casimir curvature} break up into irreducibles as follows:
\begin{align*}
\mm_\CC^{\ast} &\cong V_{(1,0)}\oplus V_{(0,1)}, \\
\hh_\CC &\cong V_{(1,1)}, \\
\hh_\CC\otimes \mm_\CC^{\ast} &\cong (V_{(1,0)}\otimes V_{(1,1)})\oplus (V_{(0,1)}\otimes V_{(1,1)}) \\
&\cong (V_{(1,0)} \oplus V_{(0,2)} \oplus V_{(2,1)})\oplus( V_{(0,1)} \oplus V_{(2,0)} \oplus V_{(1,2)}).
\end{align*} 
As vector spaces, this decomposition reads
\[\hh_\CC\otimes \mm_\CC^{\ast}\cong \CC^3\oplus\CC^6\oplus\CC^{15}\oplus \CC^3\oplus\CC^6\oplus\CC^{15}\]
and with respect to this decomposition
\[\rho_{\hh_\CC\otimes \mm_\CC^{\ast}}(\Cas_\hh) = \diag(-4, -10, -16, -4, -10,-16),\]
while
\[\rho_\hh(\Cas_\hh)= -9\quad \text{ and }\quad\rho_{\mm_\CC^{\ast}}(\Cas_\hh)=-4.\]
The result follows by adding these numbers as dictated by Lemma \ref{casimir curvature}.

\medskip
\noindent\textbf{Case: $\SU(2)^3/\SU(2)$}
\medskip\newline
\label{case SU2 3  mod SU2}%
Let $J_1, J_2, J_3$ be a basis for $\su(2)$ satisfying $[J_i,J_j]=\epsilon_{ijk} J_k$.  A basis for the diagonal subalgebra $\hh$ of $\su(2)\oplus\su(2)\oplus\su(2)$ is given by $I_i=(J_i,J_i,J_i)$ for $i=1,2,3$.  By direct calculation (see for instance Appendix \ref{sec:_su_2}), one finds that $B(I_i, I_j)=\sfrac12\delta_{ij}$, so $\Cas_\hh=2\sum_{i=1}^3I_i\otimes I_i$.  Denote by $(V_m,\rho_m)$ the $(m+1)$-dimensional complex irreducible representation of $\hh\cong\su(2)$; then \begin{equation}\label{eqn:rhomCas su2}
	\rho_m(\Cas_\hh)=-\sfrac12 m(m+2).
\end{equation}

The representations that appear in Equation \eqref{eqn:casimir curvature} break up into irreducibles as follows:
\begin{align*}
\mm_\CC^{\ast} &\cong 2V_2, \\
\hh_\CC &\cong V_2, \\
\hh_\CC\otimes \mm_\CC^{\ast} &\cong 2V_0\oplus 2V_2\oplus 2V_4.
\end{align*}
As vector spaces, this decomposition reads
\[\hh_\CC\otimes \mm_\CC^{\ast}\cong \CC^2\oplus\CC^6\oplus\CC^{10}\]
and with respect to this decomposition
\[\rho_{\hh_\CC\otimes \mm_\CC^{\ast}}(\Cas_\hh) = \diag(0,-4,-12),\]
while
\[\rho_{\hh_\CC}(\Cas_\hh)= -4\quad \text{ and }\quad\rho_{\mm_\CC^{\ast}}(\Cas_\hh)=-4.\]
The result now follows by adding these numbers as dictated by Lemma \ref{casimir curvature}.
 
\medskip
\noindent\textbf{Case: $\Sp(2)/\Sp(1)\times\U(1)$}
\medskip\newline
A basis for $\hh=\sp(1)\oplus\uu(1)$ is given by
 \[ I_1=\begin{pmatrix} \ii & 0 \\ 0 & 0 \end{pmatrix},\quad I_2=\begin{pmatrix} \jj & 0 \\ 0 & 0 \end{pmatrix},\quad I_3=\begin{pmatrix} \kkk & 0 \\ 0 & 0 \end{pmatrix},\quad I_4=\begin{pmatrix} 0 & 0 \\ 0 & \ii \end{pmatrix}. \]
 By direct calculation one finds that $B(I_i, I_j) = \delta_{ij}$.
Since $\su(2)\cong \sp(1)$, we can reuse some of the previous computation.  Let's use notation with prime ($I_1',I_2',I_3',B',\Cas'$) to denote the previous case.  Since $[\ii,\jj]=2\kkk$, we have $[I_i,I_j]=2\epsilon_{i,j,k}I_k$, hence $I_i'=\frac12 I_i$.
Also, we know that $B'(I_i',I_j')=\frac12\delta_{ij}$ while  $B(I_i,I_j)=\delta_{ij}$,
so
\[B(I_i,I_j)=\delta_{ij}=2B'(I_i',I_j')=\frac12B'(I_i,I_j),\]
hence $B=\frac12 B'$, hence $\Cas_{\sp(1)}=2\Cas'=4\sum_{i=1}^3I_i'\otimes I_i'=\sum_{i=1}^3I_i\otimes I_i$.
We therefore have that $\Cas_\hh=\sum_{i=1}^4 I_i\otimes I_i$.  Let $(V_{(m,n)},\rho_{(m,n)})$ denote the unique irreducible $(m+1)$-dimensional complex representation of $\sp(1)\oplus\uu(1)$ in which $\rho_{(m,n)}(I_4)=n\ii$.  Then $\rho_{(m,n)}(\Cas_\hh)$ has eigenvalue $-m(m+2)-n^2$.

 One finds that
 \begin{align}
  \mm^\ast_\CC &\cong V_{(1,-1)} \oplus V_{(1,1)} \oplus V_{(0,-2)} \oplus V_{(0,2)}, \label{eqn:Sp1xU1 m}\\
  \hh_\CC &\cong V_{(2,0)} \oplus V_{(0,0)}. \label{eqn:Sp1xU1 h}
 \end{align}
 The Casimir has a single eigenvalue $-4$ on $\mm^\ast_\CC$.  On the subspace $V_{(2,0)}$ of $\hh_\CC$ it has eigenvalue $-8$, and on the tensor product
 \begin{equation}
  V_{(2,0)} \otimes \mm^\ast_\CC \cong V_{(1,-1)}\oplus V_{(3,-1)}\oplus V_{(1,1)}\oplus V_{(3,1)}\oplus V_{(2,-2)}\oplus V_{(2,2)} \label{eqn:Sp1xU V20tensor m}
 \end{equation}
 it has eigenvalues $-4, -12$ and $-16$ with eigenspaces of dimension $4$, $6$ and $8$ respectively.  On the subspace $V_{(0,0)}$ of $\hh_\CC$ it has eigenvalue zero and on the tensor product $V_{(0,0)}\otimes \mm^\ast_\CC\cong\mm^\ast_\CC$ it has eigenvalue $-4$.  The result then follows from Lemma \ref{casimir curvature}.
\end{proof}

In \cite{Xu09} the eigenvalues of an operator proportional to $\epsilon\mapsto -2F\llcorner \epsilon$ are calculated for the canonical connection on $S^6$.  It was claimed that all eigenvalues were non-negative, leading to the conclusion that this instanton is rigid.  The previous proposition shows that, on the contrary, this operator has both negative and positive eigenvalues.

Support for the accuracy of our calculation is provided by the following observation.  Since both the action of two-forms by contraction on 1-forms and the adjoint action of $\su(3)$ are traceless, the operator $\epsilon\mapsto -2F\llcorner \epsilon$ must be traceless.  The result presented in Proposition \ref{curvature eigenvalues} is consistent with the tracelessness of this operator.  In contrast, the calculation leading to \cite[Lemma 3.4]{Xu09} implies that this operator is not traceless, so cannot be correct.  In fact, any traceless non-negative operator is necessarily zero, so this curvature operator is non-negative only in trivial cases.

Thus the proof of rigidity of the instanton on $S^6$ given by \cite[Thm 3.3]{Xu09} is invalid.  For reasons explained above we have not been able to prove the rigidity of the instanton on $S^6$ using a positivity argument along the lines of \cite{Xu09}.  In the next section we prove the rigidity of this instanton using more powerful methods.

\section{The spectrum of the Laplacian}
\label{sec:frobenius}

In this section the space of deformations of the canonical connection on each of the homogeneous nearly K\"ahler manifolds is determined.  To compute this space, we first derive a representation-theoretic expression for the whole of the operator on the right hand side of the Schr\"odinger--Lichnerowicz formula given by Equation \eqref{SLNK2}, and then determine its spectrum using Frobenius reciprocity and standard formulae for the eigenvalues of the quadratic Casimir in particular representations.

Let $E$ be any representation of $H\subset G$, and let $L^2(G;\mm^\ast\otimes E)$ denote the $L^2$ completion of the space of $\mm^\ast\otimes E$-valued functions on $G$.  This linear space carries the following left representations of $G$:
\begin{itemize}
 \item The left regular representation $f\mapsto \rho_L(g)f$, where
 \[ \rho_L(g)f(g'):= f(g^{-1}g')\quad \forall g,g'\in G, f\in L^2(G;\mm^\ast\otimes E); \]
 \item The right regular representation $f\mapsto \rho_R(g)f$, where
 \[ \rho_R(g)f(g'):= f(g'g)\quad \forall g,g'\in G, f\in L^2(G;\mm^\ast\otimes E); \]
 \item The representation induced by the representation $\rho_{\mm^\ast}$ on $\mm^\ast$ (denoted by the same symbol $\rho_{\mm^\ast}$);
 \item The representation induced by the representation $\rho_{E}$ on $E$ (denoted by the same symbol $\rho_E$).
\end{itemize}
The $L^2$ completion of the space of sections of the vector bundle $T^\ast (G/H)\otimes P\times_H E$ can be identified with the fixed set $L^2(G;\mm^\ast\otimes E)_H\subset L^2(G;\mm^\ast\otimes E)$ of the combined actions of $\rho_R$, $\rho_E$ and $\rho_{\mm^\ast}$.  On the level of Lie algebras, elements of $L^2(G;\mm^\ast\otimes E)_H$ are functions $f$ satisfying
\begin{equation}\label{eq:4.1} \rho_R(X)f + \rho_{\mm^\ast\otimes E} (X) f = 0 \qquad\forall X\in \hh. \end{equation}

\begin{proposition}
 Let $A$ be the canonical connection on a nearly K\"ahler coset space $G/H$, let $F$ be its curvature, let $\psi$ be the Killing spinor, let $(\rho_E,E)$ be a representation of $H$, and let $\epsilon$ be a smooth section of $T^\ast (G/H)\otimes E(G/H)$.  Then
 \[ (D^{1/3,A})^2 \epsilon\cdot\psi = (-\rho_L(\Cas_\gg)\epsilon + \rho_E(\Cas_\hh)\epsilon + 4\epsilon)\cdot\psi. \]
\end{proposition}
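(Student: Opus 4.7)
The plan is to apply the Schr\"odinger--Lichnerowicz identity \eqref{SLNK2} and then rewrite both non-constant operators on its right-hand side as algebraic Casimir expressions using the homogeneous structure of $G/H$.

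Using the Frobenius identification of sections of $T^\ast(G/H)\otimes E(G/H)$ with equivariant functions in $L^2(G;\mm^\ast\otimes E)_H$ prepared above for Equation~\eqref{eq:4.1}, the canonical covariant derivative $\nabla^{1,A}_{I_a}$ in a direction $I_a\in\mm$ becomes the right-translation operator $\rho_R(I_a)$. On a naturally reductive compact homogeneous space this identifies the rough Laplacian with $-\sum_a\rho_R(I_a)^2$. Completing this sum to all of $\gg$ yields $-\rho_R(\Cas_\gg)+\sum_i\rho_R(I_i)^2$; the bi-invariance of the Casimir gives $\rho_R(\Cas_\gg)=\rho_L(\Cas_\gg)$, while on equivariant functions the constraint \eqref{eq:4.1} converts each $\rho_R(I_i)$ with $I_i\in\hh$ into $-\rho_{\mm^\ast\otimes E}(I_i)$, so that
\[
(\nabla^{1,A})^\ast\nabla^{1,A}\epsilon = -\rho_L(\Cas_\gg)\epsilon + \rho_{\mm^\ast\otimes E}(\Cas_\hh)\epsilon.
\]

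Next I invoke Lemma~\ref{casimir curvature} to rewrite
\[
-2F\llcorner\epsilon = \rho_{\mm^\ast}(\Cas_\hh)\epsilon + \rho_E(\Cas_\hh)\epsilon - \rho_{\mm^\ast\otimes E}(\Cas_\hh)\epsilon,
\]
and substitute both expressions into the right-hand side of \eqref{SLNK2}. The $\rho_{\mm^\ast\otimes E}(\Cas_\hh)$ terms cancel and one is left with
\[
(D^{1/3,A})^2\epsilon\cdot\psi = \bigl(-\rho_L(\Cas_\gg)\epsilon + \rho_E(\Cas_\hh)\epsilon + (\rho_{\mm^\ast}(\Cas_\hh)+8)\epsilon\bigr)\cdot\psi.
\]
The proof is then completed by the identity $\rho_{\mm^\ast}(\Cas_\hh) = -4\cdot 1_{\mm^\ast}$, which collapses the bracketed constant to $4$. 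This identity is equivalent, via the standard formula relating the Ricci tensor of the canonical connection on a naturally reductive space to the isotropy Casimir on $\mm$, to the statement $\Ric^1 = 4g$ supplied by Proposition~\ref{prop:Ricci} at $t=1$. For the four homogeneous examples it can also be verified case by case from the irreducible decompositions of $\mm^\ast_\CC$ listed in the proof of Proposition~\ref{curvature eigenvalues} together with the Freudenthal formula~\eqref{freudenthal}, which returns $-4$ on every summand appearing in~$\mm^\ast_\CC$.

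The main technical obstacle is the first step, justifying $(\nabla^{1,A})^\ast\nabla^{1,A} = -\sum_a\rho_R(I_a)^2$ on equivariant functions: in general the rough Laplacian carries an additional correction involving $\nabla^{1}_{I_a}I_a$, and one must verify that this correction vanishes because the canonical connection is the one for which the one-parameter subgroups $\exp(tX)\cdot H$ with $X\in\mm$ are geodesics, a defining feature of naturally reductive spaces with the bi-invariant metric used here. Once this is in place the remainder of the proof is a direct substitution and cancellation of Casimir terms.
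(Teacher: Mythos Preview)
Your proof is correct and follows essentially the same route as the paper's. Both arguments apply the Schr\"odinger--Lichnerowicz formula \eqref{SLNK2}, identify the rough Laplacian of the canonical connection with $-\rho_R(\Cas_\mm)$ (the paper cites \cite{MS10} for this, while you spell out the geodesic-of-canonical-connection reason that the $\nabla^1_{I_a}I_a$ correction vanishes), invoke Lemma~\ref{casimir curvature} for the curvature term, use equivariance \eqref{eq:4.1} to turn $\rho_R(\Cas_\hh)$ into $\rho_{\mm^\ast\otimes E}(\Cas_\hh)$, and finish with $\rho_{\mm^\ast}(\Cas_\hh)=-4$ via Proposition~\ref{prop:Ricci}. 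The only difference is cosmetic: you first package the Laplacian as $-\rho_L(\Cas_\gg)+\rho_{\mm^\ast\otimes E}(\Cas_\hh)$ and then cancel against the curvature term, whereas the paper assembles all Casimir pieces before combining $-\rho_R(\Cas_\mm)-\rho_R(\Cas_\hh)=-\rho_L(\Cas_\gg)$.
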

\begin{proof}
 By the Schr\"odinger--Lichnerowicz formula \eqref{SLNK2} the square of the Dirac operator can be expressed as a sum of three terms involving a Laplacian, a curvature operator, and scalar multiplication.
 
 The covariant derivative $\nabla^{1,A}$ from which the Laplacian is built is equal to the covariant derivative on $\mm^\ast\otimes E$ induced by the canonical connection of the homogeneous space.  It is a standard result (see \cite{MS10}) that the Laplacian can be identified with the action of a Casimir on $C^\infty(G;\mm^\ast\otimes E)_H$:
 \[ (\nabla^{1,A})^\ast\nabla^{1,A} \epsilon = - \rho_R(\Cas_\mm)\epsilon. \]
 (Note that the right action of $\Cas_\mm$ on $C^\infty(G;\mm^\ast\otimes E)$ descends to an action on $C^\infty(G;\mm^\ast\otimes E)_H$ because $\Cas_\mm$ is $H$-invariant).

 The curvature term may be expressed as a sum of Casimirs by virtue of Lemma \ref{casimir curvature}.  Inserting these expressions into the Schr\"odinger--Lichnerowicz formula \eqref{SLNK2} yields
 \[ (D^{1/3,A})^2 (\epsilon\cdot\psi) = \big( - \rho_R(\Cas_\mm) -\rho_{\mm^\ast\otimes E}(\Cas_\hh) + \rho_{\mm^\ast}(\Cas_\hh)+\rho_E(\Cas_\hh)+8\big) \epsilon\cdot\psi. \]

 Equation \eqref{eq:4.1} implies that $\rho_{\mm^\ast\otimes E} (\Cas_\hh)\epsilon = \rho_R(\Cas_\hh)\epsilon$ for every $\epsilon\in C^\infty(G;\mm^\ast\otimes E)_H$.  Moreover,
 \[ - \rho_R(\Cas_\mm) - \rho_R(\Cas_\hh) = -\rho_L(\Cas_\gg), \]
 because the left and right actions of $\Cas_\gg=\Cas_\hh+\Cas_\mm$ on $C^\infty(G;\mm^\ast\otimes E)$ agree.  Finally, the operator $\rho_{\mm^\ast}(\Cas_\hh)$ is equal to minus the action of the Ricci curvature of the canonical connection on the cotangent bundle. 
  Proposition \ref{prop:Ricci} tells us the Ricci curvature is equal to 4 times the identity.  Combining the above observations yields the advertised result.
\end{proof}

The previous proposition allows direct verification of the rigidity of the canonical connections on the nearly K\"ahler homogeneous spaces.  Deformations correspond to eigenfunctions $\epsilon\cdot\psi$ of the Dirac operator with eigenvalue 2.  Any such section satisfies $(D^{1/3,A})^2(\epsilon\cdot\psi)=4\epsilon\cdot\psi$, or equivalently,
\begin{equation}\label{casimir equality} \rho_L(\Cas_\gg)\epsilon = \rho_E(\Cas_\hh)\epsilon. \end{equation}
This equation may be solved using the Frobenius reciprocity theorem (see for instance \cite[Thm 1.14]{Knapp-reptheory}) by following this algorithm:
\begin{enumerate}
\item Identify the complex representation $E$ of $H$ associated with the principal bundle under question.  In the case of the $H$-bundle $G\to G/H$, $E=\hh_\CC:=\hh\otimes\CC$ with its adjoint action.  In the case of the SU(3)-bundle associated with the tangent bundle, note that the action of $H$ on $\mm^\ast$ defines a homomorphism $H\to \SU(3)\subset\End(\mm^\ast)$.  Then $E=\su(3)_\CC$ and $H$ acts on this space adjointly.
\item Identify the irreducible components $E_\alpha$ of the representation $E$ of $H$, so that
\[(E,\rho_E)=\bigoplus_\alpha n_\alpha(E_\alpha,\rho_\alpha).\]
\item For each $\alpha$, calculate the eigenvalue \[C_\alpha=\rho_\alpha(\Cas_\hh)\] of $\Cas_\hh$ acting on $E_\alpha$.
\item  Identify all irreducible representations $(W_{\alpha\gamma},\sigma_{\alpha\gamma})$ of $G$ for which $\sigma_{\alpha\gamma}(\Cas_\gg)=C_\alpha$.
\item For each $\alpha$ identify the decomposition $E_\alpha\otimes \mm_\CC^\ast=\bigoplus_{\beta} U_{\alpha\beta}$ into irreducible components as representations of $H$.  For each $(\alpha,\beta,\gamma)$, determine the multiplicity $n(\alpha,\beta,\gamma)$ of each such $U_{\alpha\beta}$ in the representation $(W_{\alpha\gamma},\sigma_{\alpha\gamma}|_H)$ of $H$.
\end{enumerate}
We then have that
\begin{equation*}
	L^2(G;E\otimes \mm_\CC^\ast)_H=\bigoplus_{\alpha,\beta}n_\alpha L^2(G;U_{\alpha\beta})_H
\end{equation*}
and Frobenius reciprocity tells us that the multiplicity of $(W_{\alpha\gamma},\sigma_{\alpha\gamma})$ in $L^2(G;U_{\alpha\beta})_H$ as representations of $G$ is equal to the multiplicity $n(\alpha,\beta,\gamma)$ of $U_{\alpha\beta}$ in $W_{\alpha\gamma}$ as representations of $H$.  
Since $L^2(G;E\otimes\mm^\ast_\CC)_H=\bigoplus_{\alpha,\beta}n_\alpha L^2(G;U_{\alpha\beta})_H$
and since by construction, the space of solutions to Equation \eqref{casimir equality} in $L^2(G;U_{\alpha\beta})_H$ isomorphic to $\bigoplus_{\gamma}n(\alpha,\beta,\gamma)W_{\alpha\gamma}$, 
 the space of solutions to Equation \eqref{casimir equality} in $L^2(G;E\otimes\mm^\ast_\CC)_H$ is isomorphic to
\[ \bigoplus_{\alpha,\beta,\gamma} n_\alpha n(\alpha,\beta,\gamma) W_{\alpha\gamma}. \]

This vector space is the complexification of $\ker\bigl((D^{1/3,A})^2-4\bigr)\cap\Omega^1L_0$, so $\ker\bigl((D^{1/3,A})^2-4\bigr)\cap\Omega^1L_0$ is a real representation of $G$ whose complexification is isomorphic to the above sum of irreducibles.  By Proposition \ref{complexification}, $\ker\bigl((D^{1/3,A})^2-4\bigr)\cap\Omega^1L_0$ is isomorphic to two copies of $\ker\bigl(D^{1/3,A}-2\bigr)\cap\Omega^1L_0$.  Therefore we have proved the following result.

\begin{lemma}\label{lemma:deformations}
	The space of instanton perturbations is a real representation of $G$ whose complexification is isomorphic to
\[ \bigoplus_{\alpha,\beta,\gamma} \frac{n_\alpha n(\alpha,\beta,\gamma)}{2} W_{\alpha\gamma}. \]
\end{lemma}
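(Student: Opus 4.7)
The plan is to organize the ingredients already assembled just before the lemma statement into a clean Frobenius-reciprocity argument. First I would invoke the preceding proposition to identify, for any section $\epsilon\in L^2(G;E\otimes \mm_\CC^\ast)_H$, the eigenvalue equation $(D^{1/3,A})^2(\epsilon\cdot\psi)=4\epsilon\cdot\psi$ with the algebraic equation \eqref{casimir equality}, i.e.\ $\rho_L(\Cas_\gg)\epsilon=\rho_E(\Cas_\hh)\epsilon$. The right-hand side acts by the scalar $C_\alpha$ on each isotypic component $E_\alpha\otimes\mm_\CC^\ast$, so the problem reduces to finding, in the left-regular representation on each summand $L^2(G;U_{\alpha\beta})_H$, the eigenspaces of $\rho_L(\Cas_\gg)$ with eigenvalue $C_\alpha$.

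Next I would apply the Peter--Weyl theorem and Frobenius reciprocity. Peter--Weyl decomposes $L^2(G;U_{\alpha\beta})_H$ as a $G$-module into a direct sum of irreducible $G$-representations $W$, each appearing with multiplicity $\dim\mathrm{Hom}_H(U_{\alpha\beta},W|_H)$; since $\Cas_\gg$ acts by a scalar on any irreducible $G$-module, only those irreducibles $W_{\alpha\gamma}$ on which $\Cas_\gg$ takes the value $C_\alpha$ contribute to the $C_\alpha$-eigenspace. Their multiplicity is by definition $n(\alpha,\beta,\gamma)$. Summing over $\alpha,\beta,\gamma$ and taking into account the multiplicities $n_\alpha$ of $E_\alpha$ in $E$, one recovers the displayed formula
\[
\ker\bigl((D^{1/3,A})^2-4\bigr)\cap\Omega^1(E(G/H))_\CC \;\cong\; \bigoplus_{\alpha,\beta,\gamma} n_\alpha\, n(\alpha,\beta,\gamma)\, W_{\alpha\gamma}
\]
as $G$-representations.

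Finally, to obtain the factor of $\tfrac12$ I would appeal to part (iii) of Proposition \ref{complexification}, which identifies $\ker\bigl((D^{1/3,A})^2-4\bigr)\cap\Omega^1 L_0$ with two copies of the actual deformation space $\ker\bigl(D^{1/3,A}-2\bigr)\cap\Omega^1 L_0$ via the almost complex structure coming from Clifford multiplication by the volume form. Combined with the identification of the instanton deformation space with $\ker(D^{1/3,A}-2)\cap\Omega^1 L_0$ established in Proposition 3 and in the remarks following Proposition \ref{complexification}, this halves the multiplicities and yields the claimed formula; reality of the representation is built in because we start from a real space of sections and only complexify to apply representation-theoretic tools. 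The only non-routine ingredient is the bookkeeping in steps (3)--(5) of the algorithm, namely matching Casimir eigenvalues and computing branching multiplicities; since this reduces to standard weight calculations case-by-case (and is carried out in the cases treated later), I do not anticipate any conceptual obstacle here.
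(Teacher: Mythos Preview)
Your proposal is correct and follows essentially the same approach as the paper: the paper's argument (given in the paragraphs immediately preceding the lemma) likewise reduces the eigenvalue problem to the Casimir equality via the preceding proposition, applies Frobenius reciprocity to compute the multiplicities $n(\alpha,\beta,\gamma)$ in each $L^2(G;U_{\alpha\beta})_H$, and then invokes Proposition \ref{complexification}(iii) to obtain the factor $\tfrac12$. Your explicit mention of Peter--Weyl is a harmless elaboration of what the paper simply calls Frobenius reciprocity.
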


\begin{theorem}
\label{thm:explicit deformations}
Let $G/H$ be one of the four homogeneous six-dimensional nearly K\"ahler manifolds and let $A$ be its canonical connection.  The spaces of deformations of $A$ within the $H$-principal bundle $G\to G/H$ are isomorphic to the following representations of $G$:
\begin{center}\begin{tabular}{c|c|c|c}
$G_2/\SU(3)$ & $\SU(2)^3/\SU(2)$ & $\Sp(2)/\Sp(1)\times\U(1)$ & $\SU(3)/\U(1)^2$ \\
\hline
$0$ & $0$ & $W_{(1,0)}^\RR$ & $0$
\end{tabular}\end{center}
In the notation to be introduced below, $W_{(1,0)}^\RR$ is the real representation of $\Sp(2)$ whose complexification is irreducible with highest weight $(1,0)$; it is the unique five-dimensional real irreducible representation.

The spaces of deformations of $A$ within the $\SU(3)$-principal bundle associated with the $\SU(3)$-structure on $G/H$ are isomorphic to the following representations of $G$:
\begin{center}\begin{tabular}{c|c|c|c}
$G_2/\SU(3)$ & $\SU(2)^3/\SU(2)$ & $\Sp(2)/\Sp(1)\times\U(1)$ & $\SU(3)/\U(1)^2$ \\
\hline
$0$ & $\gg$ & $W_{(1,0)}^\RR\oplus2\gg$ & $6\gg$
\end{tabular}\end{center}
In this table $\gg$ denotes the appropriate adjoint representation in each column.
\end{theorem}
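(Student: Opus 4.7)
The plan is to apply Lemma \ref{lemma:deformations} to each of the four homogeneous spaces and each of the two choices of structure group, following the five-step algorithm laid out just before that lemma. Note first that the entry $0$ in the bottom left corner (the $\SU(3)/\U(1)^2$ case with gauge group $H$) is immediate from Theorem \ref{abelian rigid} since $H = \U(1)^2$ is abelian, so this entry does not require a separate argument. In each of the remaining seven cases my approach is to (i) decompose $E$ (equal to $\hh_\CC$ or $\su(3)_\CC$ according to which table we are computing) into irreducible $H$-representations $E_\alpha$ with multiplicities $n_\alpha$; (ii) compute the Casimir eigenvalue $C_\alpha = \rho_\alpha(\Cas_\hh)$ on each summand using the Freudenthal formula \eqref{freudenthal}, with the values of $B$ on $\hh$ already recorded in Proposition \ref{curvature eigenvalues}; (iii) enumerate all $G$-irreducibles $W_{\alpha\gamma}$ whose quadratic Casimir equals $C_\alpha$; (iv) decompose $E_\alpha\otimes\mm_\CC^\ast$ into $H$-irreducibles $U_{\alpha\beta}$ and compute the multiplicity $n(\alpha,\beta,\gamma)$ of each $U_{\alpha\beta}$ inside $W_{\alpha\gamma}|_H$; and (v) assemble the result via Lemma \ref{lemma:deformations}.

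For several of the required ingredients the work is already partly done in the previous section. The decompositions of $\mm_\CC^\ast$ and $\hh_\CC$ for each homogeneous space are given in the proof of Proposition \ref{curvature eigenvalues}, as are the normalisations of $B$ needed to compute $\Cas_\hh$. For the $\SU(3)$-gauge-group problem, the representation $\su(3)_\CC$ must be decomposed as an $H$-module using the embedding $H\hookrightarrow\SU(3)$ coming from $\rho_\mm$; since $\su(3)_\CC\oplus\CC\cong\mm_\CC^\ast\otimes\mm_\CC$ as $H$-modules (using the invariant volume form), these decompositions follow from those of $\mm_\CC^\ast$ already recorded. Step (iii) is a finite computation because the Casimir grows without bound on the weight lattice of $G$, so the set of $G$-highest weights satisfying $\sigma(\Cas_\gg) = C_\alpha$ is finite and can be enumerated directly from the Freudenthal formula applied to $\gg$. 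Step (iv) is then a branching-rule calculation $G\downarrow H$ for each of the finitely many candidate $W_{\alpha\gamma}$; these branching rules can be done explicitly by restricting weights of $G$ to the Cartan of $H$ via the embedding $\hh\hookrightarrow\gg$.

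The main obstacle will be step (iv), particularly for the cases $\Sp(2)/\Sp(1)\times\U(1)$ and $\SU(3)/\U(1)^2$ where $H$ has rank two and several distinct candidate $G$-irreducibles can contribute. Care is also required to interpret the resulting complex representation as the complexification of a real representation of $G$, since Lemma \ref{lemma:deformations} delivers half the complexified kernel; in practice one checks whether each candidate $W_{\alpha\gamma}$ is self-conjugate or comes in conjugate pairs, and identifies the real form accordingly (this is how the five-dimensional representation $W_{(1,0)}^\RR$ of $\Sp(2)$ emerges in the $\Sp(2)/\Sp(1)\times\U(1)$ entry of the first table). Two useful consistency checks are available: the $\SU(3)/\U(1)^2$ entry in the first table must vanish because of Theorem \ref{abelian rigid}, and the second-table entries must contain the first-table entries as subspaces (since $H\subset\SU(3)$), together with an additional contribution from the complement $\su(3)\ominus\hh$.
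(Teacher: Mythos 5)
Your proposal follows the same five-step algorithm the paper uses, applied case by case with Frobenius reciprocity and the Freudenthal formula, so it is essentially the paper's proof. One small slip in a passing remark: the claimed isomorphism $\su(3)_\CC\oplus\CC\cong\mm_\CC^\ast\otimes\mm_\CC$ is dimensionally wrong ($36$ versus $9$); the correct statement is $(\mm^{1,0})^\ast\otimes\mm^{1,0}\cong\su(3)_\CC\oplus\CC$, where $\mm^{1,0}$ denotes $\mm$ viewed as a complex $3$-dimensional vector space via the almost complex structure $J$, and this corrected shortcut is equivalent to what the paper does by writing out the embedding $H\hookrightarrow\SU(3)$ explicitly in each case.
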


\begin{proof}
We proceed case by case to follow the algorithm explained above.
\medskip\newline
\textbf{Case: }$G_2/\SU(3)$ with structure group $H$
\medskip\newline
The adjoint representation $E= \su(3)$ of the gauge group $\SU(3)$ is irreducible.  As noted in the proof of Proposition \ref{curvature eigenvalues}, the unique eigenvalue of $\rho_E(\Cas_\hh)$ on this space is $-9$.

In order to calculate eigenvalues of $\Cas_{\gg_2}$ we appeal once again to the Freudenthal formula \eqref{freudenthal}.  The Cartan subalgebra for the complexified $(\gg_2)_\CC$ is in fact the Cartan subalgebra of $\sll_3\CC\subset (\gg_2)_\CC$.  It is spanned by the matrices $H_1=\diag(1,-2,1)$ and $H_2=\diag(0,1,-1,)$ \label{H1H2G2} dual to the fundamental weights $\lambda_1$ and $\lambda_2$ of $\gg_2$.  By direct calculation one finds that
\begin{equation}
\vect{B(H_1,H_1) & B(H_1,H_2)\\ B(H_2,H_1)& B(H_2,H_2)}=\vect{ -4 &  \phantom{-}2 \\  \phantom{-}2& -\frac43}.	\label{eqn:BG2}
\end{equation}
 Therefore 
 \[\vect{B(\lambda_1,\lambda_1) & B(\lambda_1,\lambda_2)\\ B(\lambda_2,\lambda_1)& B(\lambda_2,\lambda_2)}=\vect{ -1 & -\frac32 \\  -\frac32& -3}.\]
 One finds that $\delta$, defined to be half of the sum of the positive roots, is equal to $\lambda_1+\lambda_2$\label{deltaG2}; see indeed Equation \eqref{eqn:deltaG2}.  Therefore the Freudenthal formula of Equation \eqref{freudenthal} yields
\begin{align*} \nu_{(m_1,m_2)}(\Cas_{\gg_2})&=B(m_1\lambda_1+m_2\lambda_2,m_1\lambda_1+m_2\lambda_2)+2B(m_1\lambda_1+m_2\lambda_2,\lambda_1+\lambda_2)\\
	&=-(m_1^2+3m_2^2+3m_1m_2+5m_1+9m_2)
	\end{align*}
for the Casimir in the representation $\nu_{(m_1,m_2)}$ of $(\gg_2)_\C$ with highest weight $m_1\lambda_1+m_2\lambda_2$.
 The smallest eigenvalues for the Casimir operator are 0, $-6$ and $-12$.  In particular $-9$ is not an eigenvalue of $\Cas_{\gg_2}$ in any representation.  Therefore Equation~\eqref{casimir equality} admits no solutions $\epsilon\in C^\infty(G_2;\mm^\ast\otimes\su(3))_{\SU(3)}$ and the canonical connection admits no perturbations.
 
\medskip\noindent
\textbf{Case: }$G_2/\SU(3)$ with structure group $\SU(3)$
\medskip\newline
This case is identical to the case of structure group $H$, since $H=\SU(3)$.

\medskip\noindent
\textbf{Case: }$\SU(2)^3/\SU(2)$ with structure group $H$
\medskip\newline
The adjoint representation $E=\su(2)$ of $H=\SU(2)$ is irreducible, and, as has already been noted in the proof of Proposition \ref{curvature eigenvalues}, the unique eigenvalue of $\rho_E(\Cas_\hh)$ on this space is $-4$.

 A basis for $\gg=\su(2)\oplus\su(2)\oplus\su(2)$ is given by $I_i^{(1)}=(J_i,0,0)$, $I_i^{(2)}=(0,J_i,0)$ and $I_i^{(3)}=(0,0,J_i)$, where $i=1,2,3$ and $J_i$ are a basis for $\su(2)$ satisfying $[J_i,J_j]=\epsilon_{ijk}J_k$.  One finds that $B(I_i^{(a)},I_j^{(b)})=\frac16\delta_{ij}\delta_{ab}$, and hence that $\Cas_\gg=6\sum_{i,a} I_i^{(a)}\otimes I_i^{(a)}$.  Irreducible representations of $\gg$ take the form $V^{(1)}_{m_1}\otimes V^{(2)}_{m_2}\otimes V^{(3)}_{m_3}$, where $V^{(a)}_{m}$ denotes the irreducible representation of the $a$-th copy of $\su(2)$ with highest weight $m$.  The Casimir is equal to $-\frac32\sum_a m_a(m_a+2)$ on such a representation.  The following table gives the smallest eigenvalues of $\Cas_\gg$ amongst all representations:
 \begin{center}
 \begin{tabular}{c|cccc}
  $(m_1,m_2,m_3)$ & (1,0,0) & (1,1,0) & (2,0,0) & (1,1,1) \\
  \hline
  $\rho_{(m_1,m_2,m_3)}(\Cas_\gg)$ & \rule{0pt}{19pt}$\displaystyle-\frac92$ & $-9$ & $-12$ & $\displaystyle-\frac{27}{2}$
 \end{tabular}
 \end{center}
 Since $-4$ does not occur as an eigenvalue of the $\Cas_\gg$, Equation~\eqref{casimir equality} admits no solutions on $L^2(G;\su(2)\otimes \mm^\ast)_H$ and the instanton is stable.

\medskip\noindent
\textbf{Case: }$\SU(2)^3/\SU(2)$ with structure group $\SU(3)$
\medskip\newline
Next we consider the case of gauge group $\SU(3)$.  We now give a different basis of $\gg=\su(2)\oplus\su(2)\oplus\su(2)$.  As on page \pageref{case SU2 3  mod SU2}, we let $I_i=I_i^{(1)}+I_i^{(2)}+I_i^{(3)}=(J_i,J_i,J_i)$ be the basis of $\hh=\su(2)$.  The orthogonal complement $\mm$ can either be seen as a  six-dimensional real vector space with basis $X_i=((1+\sqrt2)J_i,(1-\sqrt2)J_i,-2J_i)$ and $Y_i=\sqrt{6}(J_i,-J_i,0)$ or a three-dimensional complex vector space with basis $X_1,X_2,X_3$.   The almost complex structure sends $X_i$ to $Y_i$ and $Y_i$ to $-X_i$.

The action of $\su(2)$ on $\mm$ defines a homomorphism $\su(2)\to\su(3)$ where $\ad(I_i)(X_j)=\epsilon_{ijk}X_k$.  Under the adjoint action of $\su(2)$, $E=\su(3)$ breaks up into two irreducible pieces:
\begin{equation}
	\label{eqn:su3V2V4} \su(3)_\CC \cong V_2\oplus V_4.
\end{equation}
 The component $V_2$ is just the embedded $\su(2)\subset\su(3)$.  We have already shown that Equation~\eqref{casimir equality} admits no solutions in $L^2(G;V_2\otimes \mm^\ast)_H$.  It remains to investigate the same equation on $L^2(G;V_4\otimes \mm^\ast)_H$.  On the subspace $V_4\subset \su(3)$ one has
 \[ \rho_E(\Cas_\hh) = -12. \]
 The Casimir $\Cas_\gg$ attains eigenvalue $-12$ precisely in the irreducible representations $V_{(2,0,0)}$, $V_{(0,2,0)}$ and $V_{(0,0,2)}$.  It remains to determine whether these occur as subrepresentations of $L^2(G;V_4\otimes \mm^\ast)_H$.

 As representation of $H$, we decompose $V_4\otimes \mm^\ast$  into irreducible pieces as
 \[ V_4\otimes \mm^\ast_\CC \cong 2V_2 \oplus 2V_4 \oplus 2V_6. \]
 The restriction of the representation $V_{(2,0,0)}$ of $G$ to $H$ is isomorphic to $V_2$.  Therefore by Frobenius reciprocity $V_{(2,0,0)}$ occurs in $L^2(G;V_2)_H$ with multiplicity $1$ and does not occur in $L^2(G;V_4)_H$ or $L^2(G;V_6)_H$.  Similarly, the representations $V_{(0,2,0)}$ and $V_{(0,0,2)}$ of $G$ each occur in $L^2(G;V_2)_H$ with multiplicity 1 and do not occur in $L^2(G;V_4)_H$ or $L^2(G;V_6)_H$.  Therefore the set of solutions to Equation~\eqref{casimir equality} in $L^2(G;V_4\otimes\mm^\ast_\CC)_H$ is an 18-dimensional vector space isomorphic to
 \[ 2V_{(2,0,0)}\oplus2V_{(0,2,0)}\oplus2V_{(0,0,2)}. \]
By Lemma \ref{lemma:deformations}, the space of instanton perturbations is isomorphic to a real subspace of
 \[ V_{(2,0,0)}\oplus V_{(0,2,0)}\oplus V_{(0,0,2)}. \]
This representation is isomorphic to the adjoint representation of $G=\SU(2)^3$.

\medskip\noindent
\textbf{Case: }$\Sp(2)/\Sp(1)\times\U(1)$ with structure group $H$
\medskip\newline
As in the proof of Proposition \ref{curvature eigenvalues}, $(V_{(m,n)},\rho_{(m,n)})$  denotes the unique $(m+1)$-dimensional irreducible representation of $\Sp(1)\times\U(1)$ in which $\U(1)$ acts with weight $n$.  Again, $\rho_{(m,n)}(\Cas_\hh)=-m(m+2)-n^2$. 

The adjoint representation $E=\hh_\CC$ splits into irreducible pieces as
 \[ E \cong V_{(2,0)} \oplus V_{(0,0)}. \]
The second component $V_{(0,0)}$ does not give rise to any instanton perturbations since instanton perturbations from this component correspond to perturbations of the part of the instanton with gauge group $\U(1)$, and we have already shown that abelian instantons admit no perturbations.  Therefore we only consider the first component $V_{(2,0)}$, for which
\[  \rho_{(2,0)}(\Cas_\hh)=-8.\]

Now we calculate the eigenvalues of $\Cas_{\gg}$ in irreducible representations of $G=\Sp(2)$.  We choose the Cartan subalgebra of $\sp(2)_\CC$ to be the space of diagonal $2\times2$ skew-adjoint quaternionic matrices of the form $\diag(a\mathbf{i},b\mathbf{i})$, with $a,b\in\CC$ and $\mathbf{i},\mathbf{j},\mathbf{k}$ denoting the imaginary quaternions.  A weight is called positive if it evaluates to a positive real number on the matrix $\ii\,\diag(2\mathbf{i},\mathbf{i})$.  The matrices $H_1=\ii\,\diag(0,\mathbf{i})$ and $H_2=\ii\,\diag(\mathbf{i},-\mathbf{i})$ are dual to the fundamental weights $\lambda_1,\lambda_2$.  By direct calculation one finds that
\[\vect{B(H_1,H_1) & B(H_1,H_2)\\ B(H_2,H_1)& B(H_2,H_2)}=\vect{ -1 &  \phantom{-}1 \\  \phantom{-}1& -2}.\]
 Therefore 
 \[\vect{B(\lambda_1,\lambda_1) & B(\lambda_1,\lambda_2)\\ B(\lambda_2,\lambda_1)& B(\lambda_2,\lambda_2)}=\vect{ -2 & -1 \\  -1& -1}.\]
 The weight $\delta$, defined to be half of the sum of the positive roots, is equal to $\lambda_1+\lambda_2$.  Therefore, by the Freudenthal formula of Equation \ref{freudenthal},
 \begin{align*}
  \sigma_{(m_1,m_2)}(\Cas_\gg) &= B(m_1\lambda_1+m_2\lambda_2,m_1\lambda_1+m_2\lambda_2) + 2B(m_1\lambda_1+m_2\lambda_2,\delta) \\
  &= -(2m_1^2+2m_1m_2+m_2^2+6m_1+4m_2)
 \end{align*}
 in the representation $(W_{(m_1,m_2)},\sigma_{(m_1,m_2)})$ with highest weight $m_1\lambda_1+m_2\lambda_2$.  The following table lists the smallest eigenvalues of $\Cas_\gg$:
 \begin{center}
  \begin{tabular}{c|ccccc}
   $(m_1,m_2)$ & (0,0) & (0,1) & (1,0) & (0,2) & (1,1) \\ \hline
   $\sigma_{(m_1,m_2)}(\Cas_\gg)$\rule{0pt}{12pt} & 0 & $-5$ & $-8$ & $-12$ &$ -15$
  \end{tabular}.
 \end{center}
The only irreducible representation in which $\Cas_\gg$ has eigenvalue $-8$ is $W_{(1,0)}$. 

Next we identify the irreducible subrepresentations of $E\otimes\mm^\ast_\CC$.  Recall now from Equations \eqref{eqn:Sp1xU1 m} and \eqref{eqn:Sp1xU V20tensor m} that as representations $H$, 
\begin{align*}
\mm^\ast_\CC &\cong V_{(1,1)}\oplus V_{(1,-1)}\oplus V_{(0,2)}\oplus V_{(0,-2)}, \text{ and}\\
V_{(2,0)}\otimes\mm^\ast_\CC &\cong V_{(1,1)}\oplus V_{(1,-1)}\oplus V_{(3,1)}\oplus V_{(3,-1)}\oplus V_{(2,2)}\oplus V_{(2,-2)}.
\end{align*}

 A direct computation shows that 
\[W_{(1,0)}^\RR=\left\{\left.\vect{a&z\\ \bar{z}&-a}\hskip1mm \right|\hskip1mm a\in\RR\text{ and }z\in\HH\right\}\]
with $\sigma_{(1,0)}(v)(A)=[v,A]$.  
As representations of $H$, we therefore have
\[ W_{(1,0)} \cong V_{(1,1)}\oplus V_{(1,-1)}\oplus V_{(0,0)}. \]
We note that $W_{(1,0)}$ has two components in common with $V_{(2,0)}\otimes\mm^\ast_\CC$, namely $V_{(1,1)}$ and $V_{(1,-1)}$.  Therefore the space of solutions to Equation \eqref{casimir equality} in $L^2(G;V_{(2,0)}\otimes\mm_\CC^\ast)$ is isomorphic to two copies of $W_{(1,0)}$.  By Lemma \ref{lemma:deformations}, the space of instanton perturbations is isomorphic to the real subspace of $W_{(1,0)}$, which has real dimension 5.

\medskip\noindent
\textbf{Case:} $\Sp(2)/\Sp(1)\times\U(1)$ with gauge group SU(3)
\medskip\newline
It is important here to keep in mind that $(A,a)\in\su(2)\oplus \uu(1)\cong \sp(1)\oplus \uu(1)$  sits in $\su(3)$ as $\diag(A+a,-2a)$.  Hence it acts on $\su(3)$ as
\[\ad((A,a))\vect{B& v\\ -v^*&b} = \vect{[A,B]&(A+3a)v\\ -((A+3a)v)^*&0}.\]
Hence the representation $E:=\su(3)_\CC$ of $H$ splits into irreducible subrepresentations as 
\[ E \cong V_{(2,0)} \oplus V_{(0,0)} \oplus V_{(1,3)}\oplus V_{(1,-3)}. \]

 Perturbations coming from the first two components have already been analysed, and it remains to consider the final two components $V_{(1,3)}$ and $V_{(1,-3)}$.  The eigenvalues of $\Cas_\hh$ on these spaces are both $-12$.  From the analysis of representations of $\sp(2)$ detailed above, we learn that the unique representation of $\gg=\sp(2)$ in which $\Cas_\gg$ has eigenvalue $-12$ is $W_{(0,2)}$, the adjoint representation.

As representations of $H$, we have
\begin{align*}
  V_{(1,3)}\otimes\mm_\CC^\ast &\cong 
  V_{(0,4)}\oplus V_{(2,4)}\oplus V_{(0,2)}\oplus V_{(2,2)} \oplus V_{(1,5)}\oplus V_{(1,1)}, \\
  V_{(1,-3)}\otimes\mm_\CC^\ast &\cong 
  V_{(0,-2)}\oplus V_{(2,-2)}\oplus V_{(0,-4)}\oplus V_{(2,-4)} \oplus V_{(1,-1)}\oplus V_{(1,-5)} .
\end{align*}

Since the representation $W_{(0,2)}$ of $G$ is the adjoint representation, we have that $W_{(0,2)}\cong \hh_\CC\oplus\mm_\CC$ as representations of $H$.  In view of Equations \eqref{eqn:Sp1xU1 m} and \eqref{eqn:Sp1xU1 h}, we have that
\begin{equation}
	W_{(0,2)} \cong V_{(2,0)}\oplus V_{(0,0)}\oplus V_{(1,1)}\oplus V_{(1,-1)}\oplus V_{(0,-2)}\oplus V_{(0,2)}
\end{equation}
as representations of $H$.

This decomposition has two components in common with $V_{(1,3)}\otimes\mm_\CC^\ast$ (namely $V_{(0,2)}$ and $V_{(1,1)}$) and two components in common with $V_{(1,-3)}\otimes\mm_\CC^\ast$ (namely $V_{(0,-2)}$ and $V_{(1,-1)}$).  Therefore $W_{(0,2)}$ occurs in each of $L^2(G;V_{(1,3)}\otimes\mm^\ast_\CC)$ and $L^2(G;V_{(1,-3)}\otimes\mm^\ast_\CC)$ with multiplicity 2.  Taking account of the previous calculation for gauge group $H$, the space of solutions to \eqref{casimir equality} in $L^2(G;E\otimes\mm^\ast_\CC)$ is isomorphic to
\[ 2W_{(1,0)}\oplus 4W_{(0,2)}. \]

By Proposition \ref{complexification}, the space of instanton perturbations is a real dimensional representation of $G$ whose complexification is isomorphic to $W_{(1,0)}\oplus 2W_{(0,2)}$.

\medskip\noindent
\textbf{Case:} $\SU(3)/\U(1)^2$ with gauge group $H$
\medskip\newline
By Theorem \ref{abelian rigid} there are no instanton perturbations with abelian gauge group $\U(1)\times\U(1)$.

\medskip\noindent
\textbf{Case:} $\SU(3)/\U(1)^2$ with gauge group $\SU(3)$
\medskip\newline
In the calculation that follows there are two distinct injective homomorphisms from $\hh=\uu(1)\oplus\uu(1)$ to $\su(3)$.  The first is induced from the inclusion of $\U(1)\times\U(1)$ in the group $\SU(3)$ of isometries.  The second is induced from the inclusion of the structure group $\U(1)\times\U(1)$ of the principal bundle $SU(3) \to \SU(3)/\U(1)\times\U(1)$ into the structure group SU(3) of the tangent bundle.  To distinguish these two homomorphisms, we  denote the target of the first $\gg$ or $\su(3)$ and the target of the second $\widetilde{\su}(3)$.

 The subalgebra $\hh\subset\gg$ is generated by
 \[ H_1=\ii\,\diag(1,-1,0),\quad H_2=\ii\,\diag(0,1,-1). \]
 The complexified orthogonal complement $\mm_\CC$ of $\hh$ splits into two pieces of types (1,0) and (0,1) with respect to the almost complex structure. This complex structure can be given as a function of the 3-symmetry, as in \cite[Eqn.~(5)]{Butruille}.
Given the third root of unity $\zeta=e^{\frac{2\pi i}3}$, the 3-symmetry on $\SU(3)$ is given by conjugating with the clock matrix $\diag(1,\zeta,\zeta^2)$.  The fixed set is the diagonal subgroup $\U(1)\times\U(1)$.  The resulting 3-symmetry $s$ on $\su(3)$ fixes $\hh=\uu(1)\oplus\uu(1)$, and on the orthogonal complement $\mm$, we have the almost complex structure
 $J=\frac{2s+1}{\sqrt{3}}$.  
  A basis for $\mm_\CC^{1,0}$ is given by
\begin{equation}
	 \label{eqn:C1C2C3}C_1=\begin{pmatrix} 0&1&0\\0&0&0\\0&0&0\end{pmatrix},  C_2=\begin{pmatrix} 0&0&0\\0&0&1\\0&0&0\end{pmatrix},  C_3=\begin{pmatrix} 0&0&0\\0&0&0\\1&0&0\end{pmatrix}. 
\end{equation}
 With respect to this basis, the images $\tilde{H}_1,\tilde{H}_2$ of $H_1,H_2$ in $\widetilde{\su}(3):=\su(\mm_\CC^{1,0})$ are
 \[ \tilde{H}_1=\ii\,\diag(2,-1,-1),\quad \tilde{H}_2=\ii\,\diag(-1,2,-1). \]

We denote by $(V_{(m_1,m_2)},\gamma_{(m_1,m_2)})$ the complex one-dimensional representation of $\U(1)\times\U(1)$ such that $\gamma_{(m_1,m_2)}(H_i)=\ii m_i$.  From the considerations above, the representation $E:=\widetilde{\su}(3)_\CC$ of $\U(1)\times\U(1)$ breaks up into irreducibles as 
\[ E \cong 2V_{(0,0)}\oplus V_{(3,0)}\oplus V_{(-3,0)}\oplus V_{(0,3)}\oplus V_{(0,-3)}\oplus V_{(3,-3)}\oplus V_{(-3,3)}, \]
while $\mm_\CC^\ast$ of $H$ breaks up as
\[
\mm_\CC^\ast \cong V_{(2,-1)}\oplus V_{(-1,2)}\oplus V_{(-1,-1)}\oplus V_{(-2,1)}\oplus V_{(1,-2)}\oplus V_{(1,1)}.
\]

The components $V_{(0,0)}$ correspond to perturbations with gauge group $\U(1)^2$, and these have already been analysed.  Therefore we need only consider the remaining six components.

One finds that $B(H_1,H_1)=B(H_2,H_2)=1$ and $B(H_1,H_2)=-1/2$, so the Casimir for $\hh=\uu(1)\oplus\uu(1)$ is
 \[ \Cas_\hh = \frac43 (H_1\otimes H_1 + H_2\otimes H_2) + \frac23(H_1\otimes H_2 + H_2\otimes H_1). \]
Then $\gamma_{(m_1,m_2)}(\Cas_\hh)=-4(m_1^2+m_1m_2+m_2^2)/3$.  Therefore the eigenvalues of $\Cas_\hh$ on $E$ are $0$ (on the components $2V_{(0,0)}$ that have already been analysed) and $-12$ (on the remaining components).

We have yet to compute $\Cas_\gg$.   We use $H_1=\diag(1,-1,0)$ and $H_2=\diag(0,1,-1)$ as on page \pageref{eqn:Bsu3}, but this time the trace involved in the definition of $B$ is over $\sll_3\C$ only, not over $\gg_2$.   We therefore get
\begin{equation}\label{eqn:Bsu3pure}
	\vect{B(H_1,H_1) & B(H_1,H_2)\\ B(H_2,H_1)& B(H_2,H_2)}=\vect{ -1 &  \phantom{-}1/2 \\  \phantom{-}1/2& -1}.
\end{equation}
This $B$ is $3/4$ times the $B$ obtained in Equation \eqref{eqn:Bsu3}, hence the Casimir is $4/3$ times the Casimir of Equation \eqref{eqn:CasimirSU3}).  Thus
 \begin{equation}
 	\label{eqn:CasimirSU3pure}  \rho_{(m_1,m_2)}(\Cas_\gg) = -\frac43(m_1^2+m_2^2+m_1m_2+3m_1+3m_2).
 \end{equation}
We can see that $-12$ is an eigenvalue of $\Cas_\gg$ only in the adjoint representation of $\gg$.  

Therefore the tensor products of each of the irreducible components of $E$ with $\mm^\ast_\CC$ are
 \begin{align*}
  V_{(3,0)}\otimes \mm_\CC^\ast &\cong V_{(5,-1)}\oplus V_{(2,2)}\oplus V_{(2,-1)}\oplus V_{(1,1)}\oplus V_{(4,-2)}\oplus V_{(4,1)}, \\
  V_{(-3,0)}\otimes \mm_\CC^\ast &\cong V_{(-1,-1)}\oplus V_{(-4,2)}\oplus V_{(-4,-1)}\oplus V_{(-5,1)}\oplus V_{(-2,-2)}\oplus V_{(-2,1)}, \\
  V_{(0,3)}\otimes \mm_\CC^\ast &\cong V_{(2,2)}\oplus V_{(-1,5)}\oplus V_{(-1,2)}\oplus V_{(-2,4)}\oplus V_{(1,1)}\oplus V_{(1,4)}, \\
  V_{(0,-3)}\otimes \mm_\CC^\ast &\cong V_{(2,-4)}\oplus V_{(-1,-1)}\oplus V_{(-1,-4)}\oplus V_{(-2,-2)}\oplus V_{(1,-5)}\oplus V_{(1,-2)}, \\
  V_{(3,-3)}\otimes \mm_\CC^\ast &\cong V_{(5,-4)}\oplus V_{(2,-1)}\oplus V_{(2,-4)}\oplus V_{(1,-2)}\oplus V_{(4,-5)}\oplus V_{(4,-2)}, \\
  V_{(-3,3)}\otimes \mm_\CC^\ast &\cong V_{(-1,2)}\oplus V_{(-4,5)}\oplus V_{(-4,2)}\oplus V_{(-5,4)}\oplus V_{(-2,1)}\oplus V_{(-2,4)}.
 \end{align*}
 
The adjoint representation of $G$ breaks up into irreducible representations of $H$ as
\[
\gg_\CC \cong 2V_{(0,0)}\oplus V_{(2,-1)}\oplus V_{(-1,2)}\oplus V_{(-1,-1)}\oplus V_{(-2,1)}\oplus V_{(1,-2)}\oplus V_{(1,1)}.
\]
Thus $\gg_\CC$ has precisely two components in common with each of the six tensor products with $\mm_\CC^\ast$ listed above.  Therefore the space of solutions to Equation \eqref{casimir equality} in $L^2(G;E\otimes\mm_\CC^\ast)$ is isomorphic to $12\gg_\CC$.  By Lemma \ref{lemma:deformations}, the space of instanton perturbations is isomorphic to $6\gg$.
\end{proof}

The spaces of solutions to the linearised instanton equations described in the previous theorem are at first sight surprisingly large, given that the expected dimension of the instanton moduli space is zero.  In fact, we can account for all of the perturbations described in this theorem by just two simple observations.

We deal first with the five-dimensional piece $W^\RR_{(1,0)}$ in the case of $\Sp(2)/\Sp(1)\times\U(1)$.  This manifold is the twistor space for $S^4$ and its nearly K\"ahler structure is the canonical nearly K\"ahler structure on the twistor space.  The pull-back of any instanton on a self-dual four-manifold to its twistor space solves the nearly K\"ahler instanton equation on the twistor space (see \cite{Popov:2009nx}).
  The canonical connection on $\Sp(2)/\Sp(1)\times\U(1)$ splits into two connections with holonomy groups $\Sp(1)$ and $\U(1)$; the $\Sp(1)$-part is the pull back of the unique $\Sp(2)$-invariant instanton on $S^4$ with first Pontryagin number 1.  This instanton belongs to a moduli space of dimension five \cite{AHS78}.  Thus the space of deformations of the canonical connection on $\Sp(2)/\Sp(1)\times\U(1)$ with gauge group contained in $\Sp(1)\times\U(1)$ is guaranteed to be at least five-dimensional, and the previous theorem states that this moduli space is in fact exactly five-dimensional.

\begin{corollary}  The perturbations of the canonical instanton on $\CC P^3$ are genuine and are in fact lifts of instantons on $S^4$.
\end{corollary}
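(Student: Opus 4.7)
My plan is to exploit the twistor description of $\CC P^3$ as the twistor space of $S^4$ to realise the five-dimensional linearised moduli space as tangents to an honest family of instantons.

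First, I would recall the splitting of the canonical connection. The structure group $H=\Sp(1)\times\U(1)$ factors, and the canonical connection on the $H$-principal bundle $\Sp(2)\to\Sp(2)/(\Sp(1)\times\U(1))$ correspondingly decomposes into an $\Sp(1)$-connection and a $\U(1)$-connection. By Theorem \ref{abelian rigid} the abelian piece is rigid, so any deformation is accounted for by the $\Sp(1)$-factor. I would then identify this $\Sp(1)$-factor with the pullback via the twistor fibration $\pi:\CC P^3\to S^4$ of the unique $\Sp(2)$-invariant $\Sp(1)$-instanton on $S^4$ with first Pontryagin number one (i.e.\ the basic BPST instanton).

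Second, I would invoke the twistor principle of \cite{Popov:2009nx}: any anti-self-dual instanton on a self-dual Riemannian four-manifold pulls back through the twistor fibration to a solution of the nearly Kähler instanton Equation~\eqref{instanton eq 1} on its twistor space. This gives a natural map from the Atiyah--Hitchin--Singer moduli space of charge-one $\Sp(1)$-instantons on $S^4$ into the space of nearly Kähler instantons on $\CC P^3$ with gauge group contained in $\Sp(1)\times\U(1)$. By \cite{AHS78} this AHS moduli space is smooth of real dimension five, and is acted on transitively (at the level of tangent spaces through the base point) by the conformal group, in particular by the isometry group $\Sp(2)$ of $S^4$ covering the round structure.

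Third, I would match the two representations. Linearising the twistor pullback construction at the canonical connection yields an injection from the tangent space to the AHS moduli space into $\ker\bigl(D^{1/3,A}-2\bigr)\cap\Omega^1L_0$. The tangent space on the left is a five-dimensional real $\Sp(2)$-representation, and Theorem \ref{thm:explicit deformations} states that the right-hand side is the unique five-dimensional irreducible real $\Sp(2)$-representation $W^\RR_{(1,0)}$. Both spaces have the same dimension and carry non-trivial $\Sp(2)$-actions, so the injection is an isomorphism. This shows simultaneously that every perturbation in $W^\RR_{(1,0)}$ is tangent to a genuine one-parameter family of nearly Kähler instantons and that all such perturbations arise as twistor lifts of instantons on $S^4$, proving the corollary.

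The main obstacle in carrying this out rigorously is verifying that the twistor lift of the linearised AHS deformation theory lands in the correct slice — that is, that one can choose representatives of AHS deformations whose lifts satisfy both the instanton linearisation and the Coulomb gauge condition $(\dd^A)^\ast\epsilon=0$ used to define $\ker(D^{1/3,A}-2)\cap\Omega^1L_0$. This is essentially a check that the Coulomb gauge on $S^4$ pulls back to the Coulomb gauge on $\CC P^3$, which follows from $\pi$ being a Riemannian submersion with minimal fibres, but it is the step that requires care.
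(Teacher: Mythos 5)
Your proposal is correct and follows essentially the same route as the paper. Both identify the $\Sp(1)$-part of the canonical connection as the twistor lift of the charge-one BPST instanton on $S^4$, observe via \cite{Popov:2009nx,AHS78} that its five-dimensional AHS moduli space lifts to a five-dimensional family of genuine nearly K\"ahler instantons, and then invoke the dimension count of Theorem \ref{thm:explicit deformations} to conclude that these lifts exhaust the space of linearised perturbations; your added care about gauge-fixing compatibility and the $\Sp(2)$-equivariance matching (using irreducibility of $W^\RR_{(1,0)}$ to promote an injection to an isomorphism) is a useful refinement of what the paper leaves implicit.
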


The remaining deformations identified in Theorem \ref{thm:explicit deformations} are all isomorphic as representations of the automorphism group $G$ to multiple copies of the Lie algebra $\gg$ of automorphic vector fields on the nearly K\"ahler manifold.  This suggests the existence of an operation that converts automorphic vector fields into instanton perturbations.  Such an operation is identified in the following proposition.

\begin{proposition}
\label{prop:automorphic vector fields}
Let $A$ be an instanton on a principal bundle $\PPP$ over a nearly K\"ahler six-manifold $M$.  Let $X$ be an automorphic vector field for the SU(3) structure and let $\chi$ be a section of $\su(3)M\otimes\Ad_\PPP\subset\Lambda^2M\otimes\Ad_\PPP$ such that $\nabla^{1,A}\chi=0$.  Let $\epsilon_X=\iota_X\chi\in\Gamma(T^\ast M\otimes\Ad_\PPP)$.
Then $\epsilon_X$ solves the infinitesimal instanton equation
\[ \dd^A\epsilon_X \cdot\psi=0. \]
\end{proposition}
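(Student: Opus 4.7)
The plan is to show that $\dd^A\epsilon_X$ pointwise takes values in $\su(3)M\otimes\Ad_\PPP\subset\Lambda^2M\otimes\Ad_\PPP$, which by the equivalence of the instanton conditions \eqref{instanton eq 1} and \eqref{instanton eq 2} is equivalent to the desired identity $\dd^A\epsilon_X\cdot\psi=0$.  Equivalently, one may carry out the computation directly in $SM\otimes\Ad_\PPP$.

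First, I would expand $\dd^A\epsilon_X$ using the torsionful canonical connection $\nabla^{1,A}$ rather than the Levi--Civita one.  For a one-form $\alpha$, the standard identity
\[
(\dd^A\alpha)(Y,Z)=(\nabla^{1,A}_Y\alpha)(Z)-(\nabla^{1,A}_Z\alpha)(Y)+\alpha(T^1(Y,Z))
\]
incorporates the torsion $T^1$ of $\nabla^1$, characterized by $g(W,T^1(Y,Z))=P(W,Y,Z)$.  With $\alpha=\iota_X\chi$, the hypothesis $\nabla^{1,A}\chi=0$ combined with Leibniz transfers every derivative onto $X$; writing $\Phi Y:=\nabla^1_YX$ and $v=\epsilon_X^\sharp$, this yields
\[
(\dd^A\epsilon_X)(Y,Z)=\chi(\Phi Y,Z)+\chi(Y,\Phi Z)+(\iota_v P)(Y,Z),
\]
where the torsion contribution has been rewritten as $\chi(X,T^1(Y,Z))=P(v,Y,Z)$.

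Second, I would use the automorphic hypothesis on $X$ to constrain the $\Phi$-action.  The condition $\mathcal{L}_X\psi=0$ combined with the Killing-spinor equation \eqref{KS} and Kosmann's formula for the spinorial Lie derivative translates into a Clifford identity expressing $(\nabla^{LC}X^\flat)\cdot\psi$ in terms of $X\cdot\psi$; converting to $\nabla^1$ through the torsion shift $\nabla^1X^\flat-\nabla^{LC}X^\flat=-\tfrac12\iota_X P$ and using that $(\iota_X P)\cdot\psi = -2\,X\cdot\psi$ (a consequence of Lemma~\ref{lemma:algebra} together with the $P$-eigenvalues in Lemma~\ref{evals PQ}) produces an identity of the form $\Phi\cdot\psi = c\,X\cdot\psi$ pinning down the non-$\su(3)$ component of $\Phi$.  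Clifford-multiplying $\dd^A\epsilon_X$ by $\psi$, the $\su(3)$-valued piece of $\chi(\Phi Y,Z)+\chi(Y,\Phi Z)$ annihilates $\psi$ automatically, because both $\chi$ and the $\su(3)$-part of $\Phi$ kill $\psi$, so the $\Phi$-action contribution reduces to $\tfrac12\chi\cdot\Phi\cdot\psi$ via the Clifford commutator and the key identity $\chi\cdot X\cdot\psi=2\,\epsilon_X\cdot\psi$ (itself derived from $\chi\cdot\psi=0$ and Lemma~\ref{lemma:algebra}).  The torsion contribution gives $(\iota_v P)\cdot\psi=-2\,\epsilon_X\cdot\psi$ for the same reasons.

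The main obstacle will be verifying the exact cancellation of these two non-$\su(3)$ contributions with matching signs and coefficients.  Crucially, $\iota_X P$ enters in two places: inside the Kosmann-type identity for $\Phi\cdot\psi$ (via the conversion from $\nabla^{LC}$ to $\nabla^1$), and inside the torsion $T^1$ of $\nabla^1$.  The delicate coupling between these two appearances, together with the nearly-Kähler differential identity $\dd P=4Q$ from Lemma~\ref{derivatives PQ} and the eigenvalue data of Lemma~\ref{evals PQ}, should force the cancellation to be exact, reflecting a genuine feature of the $\SU(3)$-structure and not a purely formal consequence of the Cartan magic formula.
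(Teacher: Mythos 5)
Your route is genuinely different from the paper's, and it does work, but you stop just short of the finish line.  The paper avoids spinors and the Kosmann formula entirely: it proves directly, from the tensor Lie-derivative identity applied to $g,\omega,\Omega$, that $\nabla^1 X + T(X,\cdot)\in\Gamma(\su(3)M)$, then rearranges $\dd^A\epsilon_X$ into (a) the action of that $\su(3)M$-valued endomorphism on $\chi$, which stays in $\su(3)M\otimes\Ad_\PPP$, and (b) the natural action of $\tilde\chi$ on $P$, which vanishes because $\su(3)$ annihilates $P$.  Your Clifford-algebraic version is a fine alternative, and the ``obstacle'' you leave open is actually routine once you compute the constant $c$: the Kosmann formula together with $\dd X^\flat = 2(\nabla^{LC}X)^\flat$ (valid since $X$ is Killing) and the Killing-spinor equation gives $(\nabla^{LC}X)^\flat\cdot\psi = X\cdot\psi$; feeding in the torsion shift and $(\iota_XP)\cdot\psi=-2X\cdot\psi$ yields $(\nabla^1X)^\flat\cdot\psi = 2X\cdot\psi$, so $c=2$.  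Then $\beta\cdot\psi=\tfrac12\chi\cdot(\nabla^1X)^\flat\cdot\psi=\chi\cdot X\cdot\psi=2\epsilon_X\cdot\psi$, which cancels the torsion term $(\iota_vP)\cdot\psi=-2\epsilon_X\cdot\psi$ exactly.  Two smaller remarks: the differential identity $\dd P = 4Q$ from Lemma~\ref{derivatives PQ}, which you invoke as part of the anticipated cancellation, plays no role here---only the algebraic eigenvalue data of Lemma~\ref{evals PQ} and the Clifford commutator identities of Lemma~\ref{lemma:algebra} are needed; and the paper's approach has the minor virtue of proving the slightly sharper statement that $\dd^A\epsilon_X$ lands in $\su(3)M\otimes\Ad_\PPP$ pointwise, though of course this is equivalent to $\dd^A\epsilon_X\cdot\psi=0$ by Equations~\eqref{instanton eq 1} and~\eqref{instanton eq 2}.
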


\begin{proof}
First we explore the consequences of $X$ being an automorphic vector field.  By definition, the Lie derivatives with respect to $X$ of $g$, $\omega$ and $\Omega$ are zero.  For any section $u$ of $(T^*)^{\otimes p}M$ and any connection $\nabla$ on $TM$ with torsion $T$,
\begin{equation*}
	\mathcal{L}_X u(Y_1,\ldots, Y_p) - \nabla_X u(Y_1,\ldots, Y_p) = \sum_{i=1}^p u(Y_1,\ldots,Y_{i-1},\nabla_{Y_i}X+T(X,Y_i),Y_{i+1},\ldots,Y_p).
\end{equation*}
Suppose that $\nabla$ is a connection with holonomy contained in $\SU(3)$ and let $u=g$.  Then the right hand side describes the natural action of the section $\nabla X+T(X,\cdot)$ of $\End(TM)$ on $g$, while the left hand side of the identity vanishes.  Therefore $\nabla X+T(X,\cdot)$ takes values in the sub-bundle of $\End(TM)$ that fixes $g$ and whose fibre is isomorphic to $\so(6)$.  Similarly, the cases $u=\omega,\Omega$ of the identity tell us that $\nabla X+T(X,\cdot)$ fixes $\omega$ and $\Omega$.  The conclusion then is that if $\nabla$ is any connection with holonomy contained in SU(3) and $X$ is an automorphic vector field for the $\SU(3)$-structure then
\[ \nabla X+T(X,\cdot) \in \Gamma(\su(3)M)\subset\Gamma(\End(TM)). \]

Now we show that $\epsilon_X=\iota_X \chi$ solves the infinitesimal instanton equation.  Given any connection $A$ on $\PPP$ and vector fields $Y,Z$,
\begin{align*}
	\dd^A\epsilon_X(Y,Z)&=Y(\epsilon_X(Z))-Z(\epsilon_X(Y))-\epsilon_X([Y,Z])\\
	&=(\nabla_Y^A\epsilon_X)(Z)+\epsilon_X(\nabla^A_{Y}Z)-(\nabla_Z^A\epsilon_X)(Y)-\epsilon_X(\nabla^A_{Z}Y)-\epsilon_X([Y,Z])\\
	&=(\nabla_Y^A\epsilon_X)(Z)-(\nabla_Z^A\epsilon_X)(Y)+\epsilon_X(T(Y,Z)).
\end{align*}
We choose to use the canonical connection $\nabla=\nabla^1$.  This connection has holonomy contained in $\SU(3)$, so $\nabla^{1,A}\chi=0$ and therefore $\nabla^{1,A}_Y\epsilon_X=\iota_{\nabla^1_YX}\chi$.  Thus
\[ \dd^A\epsilon_X(Y,Z) = \chi(\nabla^1_YX,Z)-\chi(\nabla^1_ZX,Y)+\chi(X,T(Y,Z)). \]
We rewrite the right hand side of this equation as follows:
\begin{multline*}
\dd^A\epsilon(Y,Z) = \chi(\nabla^1_YX+T(X,Y),Z)+\chi(Y,\nabla^1_ZX+T(X,Z)) \\
- \chi(T(X,Y),Z)-\chi(Y,T(X,Z))+\chi(X,T(Y,Z)).
\end{multline*}

The terms on the first line describe the linear action of $\nabla X+T(X,\cdot)$ on the 2-form part of $\chi$.  Since $\chi$ is a section of $\su(3)M\otimes\Ad_\PPP\subset\Lambda^2M\otimes\Ad_\PPP$ and (by the above argument) the endomorphism $\nabla X+T(X,\cdot)$ fixes this subbundle, the terms in the first line also describe a section of this subbundle.

The terms in the second line describe the natural action of the two-form part of $\chi$ on the three-form $P$.  More concretely, if we identify $\chi$ with a section $\tilde{\chi}$ of $\End(TM)\otimes\Ad_\PPP$ such that
\[ g(\tilde{\chi}(Y),Z)=\chi(Y,Z) \quad\forall Y,Z\in\Gamma(TM) \]
then the  second line is equal to
\[ P(\tilde{\chi}(X),Y,Z)+P(X,\tilde{\chi}(Y),Z)+P(X,Y,\tilde{\chi}(Z)) \]
Since the two-form part of $\chi$ belongs to the subspace identified with $\su(3)$ and $\su(3)$ fixes $P$, these terms vanish.  Therefore $\dd^A\epsilon(Y,Z)$ is a section of $\Ad_\PPP\otimes \su(3)\subset \Ad_\PPP\otimes\Lambda^2M$, so solves the infinitesimal instanton equation.
\end{proof}

Proposition \ref{prop:automorphic vector fields} accounts for all of the remaining deformations identified in Theorem \ref{thm:explicit deformations}, as we now briefly explain.

Note first that the curvature $F$ of the canonical connection on any coset space is a parallel section of $\su(3)M\otimes\Ad_\PPP$, so the previous proposition may be applied to $\chi=F$.  The resulting deformations $\epsilon_X=\iota_XF$ have $\dd^A\epsilon_X = \mathcal{L}_XF + [\iota_XA,F]$.  
 Since $F$ is invariant under automorphisms up to gauge, $L_XF=[\lambda_X,F]$ for some infinitesimal gauge transformation $\lambda_X$.  Since $[\iota_XA,F]$ also corresponds to the action of an infinitesimal gauge transformation, we conclude that the deformations in Proposition \ref{prop:automorphic vector fields}  obtained from $F$ are in the direction of the gauge orbit.

In order to apply the proposition we must identify all parallel sections of $\su(3)M\otimes\Ad_\PPP$ with $M=G/H$ and $\PPP$ the SU(3)-structure bundle.  These sections are in bijection with the $H$-invariant elements of the representation $\su(3)\otimes\su(3)$ of $H$, which form a vector space whose dimension equals the sum over all irreducible subrepresentations of $\su(3)$ of the squares of their multiplicities.  The number of irreducible components may be identified from the proof of Theorem \ref{thm:explicit deformations}:
\begin{itemize}
\item
In the case of $G_2/\SU(3)$, the representation $\su(3)_\CC$ of $\SU(3)$ has one irreducible component; it corresponds to the curvature of the canonical connection so Proposition \ref{prop:automorphic vector fields} yields no non-trivial instanton deformations.
\item
In the case of $\SU(2)^3/\SU(2)$, the representation $\su(3)_\CC\cong V_2\oplus V_4$ of $\SU(2)$ has two irreducible components and $\su(3)\otimes\su(3)$ contains two copies of the trivial representation, one in $V_2\otimes V_2$ and one in $V_4\otimes V_4$; the invariant element of $V_2\otimes V_2$ corresponds to the curvature of the canonical connection so the proposition yields for each element of $\gg$ a one-dimensional space of instanton perturbations.
\item
In the case of $\Sp(2)/\Sp(1)\times\U(1)$, the representation $\su(3)_\CC\cong V_{(2,0)}\oplus V_{(0,0)}\oplus V_{(1,3)}\oplus V_{(1,-3)}$ has four irreducible components; the invariant elements of $V_{(2,0)}\otimes V_{(2,0)}$ and $V_{(0,0)}\otimes V_{(0,0)}$ correspond to the $\sp(1)$- and $\uu(1)$-parts of the canonical connection, so the proposition yields for each element of $\gg$ two instanton perturbations.
\item
In the case of $\SU(3)/\U(1)^2$, the representation $\mathfrak{su}(3)_\CC\cong 2V_{(0,0)}\oplus V_{(3,0)}\oplus V_{(-3,0)}\oplus V_{(0,3)}\oplus V_{(0,-3)}\oplus V_{(3,-3)}\oplus V_{(-3,3)}$ of $\U(1)^2$ has six irreducible components of multiplicity one and one irreducible component of multiplicity two.  The invariant elements in $2V_{(0,0)}\otimes 2V_{(0,0)}$ are associated with components of the curvature of the canonical connection, so after discarding them the proposition yields for each element of $\gg$ six instanton perturbations.
\end{itemize}

\appendix 

\begin{center}\Large\textbf{Appendices}\end{center}

\section{The K\"ahler form and the complex $(3,0)$-form.}
\label{sec:omegaOmega}
In this appendix the identities \eqref{omegas} are proven.

The proof of the first of these requires careful track to be kept of minus signs, so let us first point out that, by analysis of eigenvalues (using Lemma \ref{evals PQ}),
\[ \ast Q\cdot\ast Q = -3 + 2Q. \]
This equation is consistent with the identification $\omega=\ast Q$ but not with $\omega=-\ast Q$, because $\ast Q\cdot\ast Q = -\|\ast Q\|^2 + 2\ast Q\wedge\ast Q$ and $\omega\wedge\omega=2\ast \omega$ in dimension six.

The full proof of the first equality now follows.  The definition of $\omega$ is
\[ \frac18 \Tr_S (\omega\cdot u\cdot v) = -g(u,Jv) \quad\forall u,v\in V. \]
Now
\begin{align*}
 \frac18 \Tr_S (\ast Q\cdot u\cdot v) &= -\Tr_S (\psi\otimes\psi^T \cdot \Vol \cdot u\cdot v) \\
                                      &= -(\psi, \Vol \cdot u \cdot v \cdot \psi) \\
                                      &= (\psi, u\cdot\Vol\cdot v\cdot\psi) \\
                                      &= (\psi, u\cdot Jv \cdot\psi) \\
                                      &= -(\psi, g(u,Jv) \cdot\psi) \\
                                      &= - g(u,Jv).
\end{align*}
So $\ast Q=\omega$ as desired.

To prove the second equality, it suffices to prove that $(v-\ii Jv)\lrcorner (P+\ii*P)=0$ for all cotangent vectors $v$.   Using Lemma \ref{lemma:algebra}, this statement is equivalent to proving that 
$\{v-\ii Jv,P+\ii*P\}=0$
for all vectors $v$.  Having moved the statement to the Clifford algebra, we can now use the definition of $J$, namely $Jv\cdot \psi=\Vol\cdot v\cdot \psi$.
Since $\Vol\cdot v=*v$, $Jv$ acts on $\psi\otimes \psi^T$ just as $*v$ does, and from this it can be shown that $\{Jv, P\} = \{*v, P\}$ and $\{Jv, *P\} = \{*v, *P\}$.

Moreover, for any three-form $\alpha$, we have $v\cdot \alpha =-(*v)\cdot (*\alpha)$ and $\alpha\cdot v=-(*\alpha)\cdot (*v)$.  Thus
we have
\begin{align*}
	\{v-\ii Jv,P+\ii*P\}&= \{v,P\} + \{Jv,*P\}+\ii\bigl(\{v,*P\}-\{Jv,P\}\bigr)\\
	&=\{v,P\} + \{*v,*P\}+\ii\bigl(\{v,*P\}-\{*v,P\}\bigr)\\
	&=\{v,P\} -\{v,P\}+\ii\bigl(\{v,*P\}+\{*^2v,*P\}\bigr)\\
	&=\{v,P\} -\{v,P\}+\ii\bigl(\{v,*P\}-\{v,*P\}\bigr)\\
	&=0,
\end{align*}
as desired.
\section{\texorpdfstring{$\su(3)\subset\g_2$}{su(3) inside g2}} 
\label{sec:_g_2}
This appendix describes very succinctly but explicitly the embedding $\sll_3\C\subset (\gg_2)_\C$ and exhibits the results needed in the main part of the paper.

We follow the notation and descriptions given in \cite[\S 22.1 to \S 22.3]{FultonHarris}.
The Lie algebra $(\gg_2)_\CC$ is spanned by the 14 elements $H_1,H_2,X_1,\ldots, X_6,Y_1,\ldots,Y_6$. The Cartan subalgebra is spanned by $H_1,H_2$, while   the $X_i$ belong to the root space of the root $\alpha_i$ and the $Y_i$ belong to the root space of the root $\beta_i=-\alpha_i$.
In the Cartan subalgebra, one defines $H_3:=H_1+3H_2$, $H_4:=2H_1+3H_2$, $H_5:=H_1+H_2$, $H_6:=H_1+2H_2$
to simplify the multiplication table.  They have the desirable property that
$H_i=[X_i,Y_i]$, $[H_i,X_i]=2X_i$, $[H_i,Y_i]=-2Y_i$.
The  positive roots are written in terms of the simple roots $\alpha_1,\alpha_2$ as $\alpha_3=\alpha_1+\alpha_2,
	\alpha_4=2\alpha_1+\alpha_2,
	\alpha_5=3\alpha_1+\alpha_2,
	\alpha_6=3\alpha_1+2\alpha_2$.
The simple roots satisfy	$\alpha_1(H_1)=2,\alpha_1(H_2)=-1,
	\alpha_2(H_1)=-3,\alpha_2(H_2)=2$.

There is a copy $\hh$ of \[\sll_3\C=\Span\{E_{11}-E_{22},E_{22}-E_{33},E_{12},E_{21},E_{23},E_{32},E_{13},E_{31}\}\] in $(\g_2)_\C$ given by
\[\hh:=\Span\{H_5,H_2,X_5,Y_5,X_2,Y_2,X_6,Y_6\}.\]
The term by term identification of the basis element is an isomorphism of Lie algebras.

Let $W$ be the standard representation of $\sll_3\C$. 
Then we have the orthogonal decomposition
\[(\g_2)_\C = \hh\oplus W\oplus W^*\]
as representation of $\sll_3\C$. So
$\tr_{(\g_2)_\C}(\ad(H_i)\circ \ad(H_j))=\tr_{\hh}(\ad(H_i)\circ \ad(H_j)) + 2\tr(H_iH_j)$.
One can thus compute $B(H_i,H_j)=-\frac1{12}\tr_{(\g_2)_\C}(\ad(H_j)\circ \ad(H_j))$ with $i,j\in\{5,2\}$ to obtain (once relabelling $5$ into $1$) Equation \eqref{eqn:Bsu3}.

The fundamental weights of $(\g_2)_\C$ are
$\lambda_1=2\alpha_1+\alpha_2$ and $\lambda_2=3\alpha_1+2\alpha_2$.
They are dual to the basis $H_1,H_2$. In terms of $\hh=\sll_3\C$, we have
$H_1=H_5-H_2=\diag(1,-2,1)$ and $H_2=\diag(0,1,-1)$, as mentioned on page \pageref{H1H2G2}. 
We can compute easily $\tr_{(\g_2)_\C}(\ad(H_i)\circ \ad(H_j))$ for $i,j\in\{1,2\}$ to obtain Equation \eqref{eqn:BG2}.

Note also 
\begin{equation}\label{eqn:deltaG2}
	\delta=\frac{\sum_{i=1}^6\alpha_i}2=5\alpha_1+3\alpha_2=\lambda_1+\lambda_2,
\end{equation}as claimed on page \pageref{deltaG2}.

\section{\texorpdfstring{$\su(2)\subset \su(2)^3$}{su(2) inside su(2)+su(2)+su(2)}} 
\label{sec:_su_2}
The Lie algebra $\hh=\su(2)$ has a basis $J_1,J_2,J_3$ such that $[J_i,J_j]=\epsilon_{ijk}J_k$.   Then $\tr_\hh(\ad(J_i)\circ\ad(J_j))=-2\delta_{ij}$.
 A basis for the diagonal $\su(2)\subset \su(2)\oplus\su(2)\oplus\su(2)$ is given by $I_i=(J_i,J_i,J_i)$.  
The elements $K_i=(J_i,-J_i,0)$ and $L_i=(J_i,J_i,-2J_i)$ span the orthogonal complement $\mm$.
Now $\Span(K_1,K_2,K_3)$ and $\Span(L_1,L_2,L_3)$ are, as representations, copies of $\hh$.
   Thus
$B(J_i,J_j)=-\sfrac1{12}3\tr_\hh(\ad(J_i)\circ\ad(J_j))=\sfrac12\delta_{ij}$.

Given that a vector $(X,Y)\in\C^6$ represents the element
\[(X_1\frac{K_1}{\sqrt2}+X_2\frac{K_2}{\sqrt2}+X_3\frac{K_3}{\sqrt2},Y_1\frac{L_1}2+Y_2\frac{L_2}2+Y_3\frac{L_3}2)\in V_2\oplus V_2,\]  \cite[p.12]{Butruille} gives  the expression
\[J\colon (X,Y)\mapsto \frac1{\sqrt3}(2Y-X,-2X+Y)\]
for the  almost complex structure.
In view of this definition, we have
\[J(\sqrt{2}K_i+L_i)=\sqrt{6}K_i\text{ and }J(\sqrt{6}K_i)=-(\sqrt{2}K_i+L_i).\]

Given the complex ordered basis $\BB=(\sqrt{2}K_1+L_1,\sqrt{2}K_2+L_2,\sqrt{2}K_3+L_3)$ of $\mm$, we have
\begin{align*}
	{}_\BB[\ad(I_1)]_\BB&=\vect{0&0&0\\ 0&0&-1\\ 0&1&0},\quad
	{}_\BB[\ad(I_2)]_\BB&=\vect{0&0&1\\ 0&0&0\\ -1&0&0},\quad 
	{}_\BB[\ad(I_3)]_\BB&=\vect{0&-1&0\\ 1&0&0\\ 0&0&0}.
\end{align*}
This action provides the homomorphism $\su(2)\subset \su(3)$ necessary to understand Equation \eqref{eqn:su3V2V4}.


\subsection*{Acknowledgements.} 
Important face-to-face discussions in the redaction of this paper took place while both authors were visiting the Newton Institute for Mathematical Sciences in Cambridge for the \emph{Metric and Analytic Aspects of Moduli Spaces} programme.  Many thanks to the Institute staff and the programme organizers for a stimulating intellectual atmosphere.    Part of this work was accomplished during BC's sabbatical at the Perimeter Institute for Theoretical Physics.  Research at Perimeter Institute is supported by the Government of Canada through Industry Canada and by the Province of Ontario through the Ministry of Research \& Innovation.  BC supported by an NSERC Discovery Grant.  Sincere thanks also go to the two anonymous referees for the time they took reviewing this work and for their useful comments.

\bibliographystyle{abuser}
\bibliography{NKinst,Higher-Dim-Instantons}

\def\cprime{$'$}
\begin{thebibliography}{10}

\bibitem{Acharya1997}
B.~S. Acharya, M.~O'Loughlin, and B.~Spence.
\newblock Higher dimensional analogues of {D}onaldson--{W}itten theory.
\newblock {\em Nucl.Phys. B} 503:657--674, 1997,
  \href{http://arxiv.org/abs/hep-th/9705138}{arXiv:hep-th/9705138}.

\bibitem{Agricola:SrniLectures}
I.~Agricola.
\newblock The {S}rn\'\i\ lectures on non-integrable geometries with torsion.
\newblock {\em Arch. Math. (Brno)} 42(suppl.):5--84, 2006,
  \href{http://arxiv.org/abs/math/0606705}{arXiv:math/0606705}.

\bibitem{AF04}
I.~Agricola and T.~Friedrich.
\newblock On the holonomy of connections with skew-symmetric torsion.
\newblock {\em Math. Ann.} 328(4):711--748, 2004,
  \href{http://dx.doi.org/10.1007/s00208-003-0507-9}{doi:10.1007/s00208-003-0507-9}.

\bibitem{AFK08}
I.~Agricola, T.~Friedrich, and M.~Kassuba.
\newblock Eigenvalue estimates for {D}irac operators with parallel
  characteristic torsion.
\newblock {\em Differential Geom. Appl.} 26(6):613--624, 2008,
  \href{http://dx.doi.org/10.1016/j.difgeo.2008.04.010}{doi:10.1016/j.difgeo.2008.04.010}.

\bibitem{AHS78}
M.~F. Atiyah, N.~J. Hitchin, and I.~M. Singer.
\newblock Self-duality in four-dimensional {R}iemannian geometry.
\newblock {\em Proc. Roy. Soc. London Ser. A} 362(1711):425--461, 1978,
  \url{http://jstor.org/stable/79638}.

\bibitem{Baraglia-Hekmati}
D.~Baraglia and P.~Hekmati.
\newblock Moduli spaces of contact instantons.
\newblock {\em Adv. Math.} 294:562--595, 2016,
  \href{http://dx.doi.org/10.1016/j.aim.2016.03.001}{doi:10.1016/j.aim.2016.03.001},
  \href{http://arxiv.org/abs/1401.5140}{arXiv:1401.5140}.

\bibitem{Baulieu1998}
L.~Baulieu, H.~Kanno, and I.~M. Singer.
\newblock Special quantum field theories in eight and other dimensions.
\newblock {\em Comm. Math. Phys.} 194:149--175, 1998,
  \href{http://arxiv.org/abs/hep-th/9704167}{arXiv:hep-th/9704167}.

\bibitem{BFGK91}
H.~Baum, T.~Friedrich, R.~Grunewald, and I.~Kath.
\newblock {\em Twistor and {K}illing spinors on {R}iemannian manifolds}, vol.
  108.
\newblock Seminarberichte [Seminar Reports]. Humboldt Universit\"at, Sektion
  Mathematik, Berlin, 1990, p.~179.

\bibitem{Bryant87}
R.~L. Bryant.
\newblock Metrics with exceptional holonomy.
\newblock {\em Ann. of Math. (2)} 126(3):525--576, 1987,
  \href{http://dx.doi.org/10.2307/1971360}{doi:10.2307/1971360}.

\bibitem{BS89}
R.~L. Bryant and S.~M. Salamon.
\newblock On the construction of some complete metrics with exceptional
  holonomy.
\newblock {\em Duke Math. J.} 58(3):829--850, 1989,
  \href{http://dx.doi.org/10.1215/S0012-7094-89-05839-0}{doi:10.1215/S0012-7094-89-05839-0}.

\bibitem{Butruille-vf}
J.-B. Butruille.
\newblock Classification des vari\'et\'es approximativement k\"ahleriennes
  homog\`enes.
\newblock {\em Ann. Global Anal. Geom.} 27(3):201--225, 2005,
  \href{http://dx.doi.org/10.1007/s10455-005-1581-x}{doi:10.1007/s10455-005-1581-x},
  \href{http://arxiv.org/abs/math/0401152}{arXiv:math/0401152}.

\bibitem{Butruille}
J.-B. Butruille.
\newblock Homogeneous nearly {K}\"ahler manifolds.
\newblock {\em Handbook of pseudo-{R}iemannian geometry and supersymmetry},
  vol.~16, pp.~399--423. Eur. Math. Soc., Z\"urich, IRMA Lect. Math. Theor.
  Phys., 2010, \href{http://dx.doi.org/10.4171/079-1/11}{doi:10.4171/079-1/11},
  \href{http://arxiv.org/abs/math/0612655}{arXiv:math/0612655}.

\bibitem{Capria-Salamon}
M.~M. Capria and S.~M. Salamon.
\newblock Yang--{M}ills fields on quaternionic spaces.
\newblock {\em Nonlinearity} 1(4):517--530, 1988.

\bibitem{Cherkis-octonions-monopoles-knots}
S.~A. Cherkis.
\newblock Octonions, monopoles, and knots.
\newblock {\em Lett. Math. Phys.} 105(5):641--659, 2015,
  \href{http://dx.doi.org/10.1007/s11005-015-0755-0}{doi:10.1007/s11005-015-0755-0}.

\bibitem{Clarke14}
A.~Clarke.
\newblock Instantons on the exceptional holonomy manifolds of {B}ryant and
  {S}alamon.
\newblock {\em J. Geom. Phys.} 82:84--97, 2014,
  \href{http://dx.doi.org/10.1016/j.geomphys.2014.04.006}{doi:10.1016/j.geomphys.2014.04.006},
  \href{http://arxiv.org/abs/1308.6358}{arXiv:1308.6358}.

\bibitem{Correia2009}
F.~P. Correia.
\newblock Hermitian {Y}ang--{M}ills instantons on {Calabi--Yau} cones.
\newblock {\em Journal of High Energy Physics} 2009(12):004, 2009,
  \href{http://dx.doi.org/10.1088/1126-6708/2009/12/004}{doi:10.1088/1126-6708/2009/12/004},
  \href{http://arxiv.org/abs/0910.1096}{arXiv:0910.1096}.

\bibitem{CDFN83}
E.~Corrigan, C.~Devchand, D.~B. Fairlie, and J.~Nuyts.
\newblock First-order equations for gauge fields in spaces of dimension greater
  than four.
\newblock {\em Nuclear Phys. B} 214(3):452--464, 1983,
  \href{http://dx.doi.org/10.1016/0550-3213(83)90244-4}{doi:10.1016/0550-3213(83)90244-4}.

\bibitem{CV15}
V.~Cort\'es and J.~J. V\'asquez.
\newblock Locally homogeneous nearly {K\"ahler} manifolds.
\newblock {\em Ann. Global Anal. Geom.} 48(3):269--294, 2015,
  \href{http://dx.doi.org/10.1007/s10455-015-9470-4}{doi:10.1007/s10455-015-9470-4},
  \href{http://arxiv.org/abs/1410.6912}{arXiv:1410.6912}.

\bibitem{DS09}
S.~Donaldson and E.~Segal.
\newblock Gauge theory in higher dimensions, {II}.
\newblock {\em Surveys in Differential Geometry}, vol.~16, 2011,
  \href{http://arxiv.org/abs/0902.3239}{arXiv:0902.3239}.

\bibitem{DT98}
S.~K. Donaldson and R.~P. Thomas.
\newblock Gauge theory in higher dimensions.
\newblock {\em The geometric universe (Oxford, 1996)}, pp.~31--47. Oxford Univ.
  Press, 1998, \href{http://arxiv.org/abs/0902.3239}{arXiv:0902.3239}.

\bibitem{Dunajski-Hoegner}
M.~Dunajski and M.~Hoegner.
\newblock {$SU(2)$} solutions to self-duality equations in eight dimensions.
\newblock {\em J. Geom. Phys.} 62(8):1747--1759, 2012,
  \href{http://dx.doi.org/10.1016/j.geomphys.2012.03.013}{doi:10.1016/j.geomphys.2012.03.013},
  \href{http://arxiv.org/abs/1109.4537}{arXiv:1109.4537}.

\bibitem{FN84}
D.~B. Fairlie and J.~Nuyts.
\newblock Spherically symmetric solutions of gauge theories in eight
  dimensions.
\newblock {\em J. Phys. A} 17(14):2867--2872, 1984,
  \url{http://stacks.iop.org/0305-4470/17/2867}.

\bibitem{HF15}
L.~Foscolo and M.~Haskins.
\newblock New {$G_2$} holonomy cones and exotic nearly {Kaehler} structures on
  the 6-sphere and the product of a pair of 3-spheres.
\newblock Jan. 2015, \href{http://arxiv.org/abs/1501.07838}{arXiv:1501.07838}.

\bibitem{Friedrich-Ivanov-skewtorsion}
T.~Friedrich and S.~Ivanov.
\newblock Parallel spinors and connections with skew-symmetric torsion in
  string theory.
\newblock {\em Asian J. Math.} 6(2):303--335, 2002,
  \href{http://arxiv.org/abs/math/0102142}{arXiv:math/0102142}.

\bibitem{FN85}
S.~Fubini and H.~Nicolai.
\newblock The octonionic instanton.
\newblock {\em Phys. Lett. B} 155(5-6):369--372, 1985,
  \href{http://dx.doi.org/10.1016/0370-2693(85)91589-8}{doi:10.1016/0370-2693(85)91589-8}.

\bibitem{FultonHarris}
W.~Fulton and J.~Harris.
\newblock {\em Representation theory}, vol. 129.
\newblock Graduate Texts in Mathematics. Springer-Verlag, New York, 1991,
  pp.~xvi+551.

\bibitem{GPP90}
G.~W. Gibbons, D.~N. Page, and C.~N. Pope.
\newblock Einstein metrics on {$S^3,\;{\bf R}^3$} and {${\bf R}^4$} bundles.
\newblock {\em Comm. Math. Phys.} 127(3):529--553, 1990,
  \url{http://projecteuclid.org/euclid.cmp/1104180218}.

\bibitem{Gray70}
A.~Gray.
\newblock Nearly {K}\"ahler manifolds.
\newblock {\em J. Differential Geometry} 4:283--309, 1970,
  \url{http://projecteuclid.org/euclid.jdg/1214429504}.

\bibitem{Grunewald90}
R.~Grunewald.
\newblock Six-dimensional {R}iemannian manifolds with a real {K}illing spinor.
\newblock {\em Ann. Global Anal. Geom.} 8(1):43--59, 1990,
  \href{http://dx.doi.org/10.1007/BF00055017}{doi:10.1007/BF00055017}.

\bibitem{Gunaydin:1995ku}
M.~G{\"u}naydin and H.~Nicolai.
\newblock Seven-dimensional octonionic {Y}ang-{M}ills instanton and its
  extension to an heterotic string soliton.
\newblock {\em Phys. Lett. B} 351(1-3):169--172, 1995,
  \href{http://dx.doi.org/10.1016/0370-2693(95)00375-U}{doi:10.1016/0370-2693(95)00375-U},
  \href{http://arxiv.org/abs/hep-th/9502009}{arXiv:hep-th/9502009}.

\bibitem{HILP10}
D.~Harland, T.~A. Ivanova, O.~Lechtenfeld, and A.~D. Popov.
\newblock Yang--{M}ills flows on nearly {K}\"ahler manifolds and
  {$G_2$}-instantons.
\newblock {\em Comm. Math. Phys.} 300(1):185--204, 2010,
  \href{http://dx.doi.org/10.1007/s00220-010-1115-7}{doi:10.1007/s00220-010-1115-7},
  \href{http://arxiv.org/abs/0909.2730}{arXiv:0909.2730}.

\bibitem{HN12}
D.~Harland and C.~N{\"o}lle.
\newblock Instantons and {K}illing spinors.
\newblock {\em J. High Energy Phys.} no.~3, pp.~082, front matter+37, 2012,
  \href{http://dx.doi.org/10.1007/JHEP03(2012)082}{doi:10.1007/JHEP03(2012)082},
  \href{http://arxiv.org/abs/1109.3552}{arXiv:1109.3552}.

\bibitem{Haupt2011}
A.~S. Haupt, T.~A. Ivanova, O.~Lechtenfeld, and A.~D. Popov.
\newblock Chern--{S}imons flows on {A}loff--{W}allach spaces and
  {S}pin(7)-instantons.
\newblock {\em Phys. Rev. D} 83:105028, May 2011,
  \href{http://dx.doi.org/10.1103/PhysRevD.83.105028}{doi:10.1103/PhysRevD.83.105028},
  \href{http://arxiv.org/abs/1104.5231}{arXiv:1104.5231}.

\bibitem{Haydys}
A.~Haydys.
\newblock Gauge theory, calibrated geometry and harmonic spinors.
\newblock {\em J. Lond. Math. Soc. (2)} 86(2):482--498, 2012,
  \href{http://dx.doi.org/10.1112/jlms/jds008}{doi:10.1112/jlms/jds008},
  \href{http://arxiv.org/abs/0902.3738}{arXiv:0902.3738}.

\bibitem{Humphreys}
J.~E. Humphreys.
\newblock {\em Introduction to {L}ie algebras and representation theory},
  vol.~9.
\newblock Graduate Texts in Mathematics. Springer-Verlag, New York-Berlin,
  1978, pp.~xii+171.
\newblock Second printing, revised.

\bibitem{JMPS}
M.~B. Jardim, G.~Menet, D.~M. Prata, and H.~N. {S\'a Earp}.
\newblock Holomorphic bundles for higher dimensional gauge theory.
\newblock Sept. 2015, \href{http://arxiv.org/abs/1109.2750}{arXiv:1109.2750}.
\newblock to appear in Bulletin of the London Mathematical Society.

\bibitem{Kanno1999-singularities}
H.~Kanno.
\newblock A note on higher dimensional instantons and supersymmetric cycles.
\newblock {\em Progr. Theoret. Phys. Suppl.} no.~135, pp.~18--28, 1999,
  \href{http://dx.doi.org/10.1143/PTPS.135.18}{doi:10.1143/PTPS.135.18}.
\newblock Gauge theory and integrable models (Kyoto, 1999).

\bibitem{Kanno-Yasui}
H.~Kanno and Y.~Yasui.
\newblock Octonionic {Y}ang--{M}ills instanton on quaternionic line bundle of
  {$\rm Spin(7)$} holonomy.
\newblock {\em J. Geom. Phys.} 34(3-4):302--320, 2000.

\bibitem{Knapp-reptheory}
A.~W. Knapp.
\newblock {\em Representation theory of semisimple groups}, vol.~36.
\newblock Princeton Mathematical Series. Princeton University Press, Princeton,
  NJ, 1986, pp.~xviii+774.

\bibitem{Lewis-Thesis}
C.~Lewis.
\newblock {\em Spin(7) instantons}.
\newblock Ph.D. thesis, University of Oxford, 1999,
  \url{http://people.maths.ox.ac.uk/joyce/theses/LewisDPhil.pdf}.
\newblock http://people.maths.ox.ac.uk/joyce/theses/LewisDPhil.pdf.

\bibitem{MenetNordstromSaEarp}
G.~Menet, J.~Nordstr\"om, and H.~N.~S. Earp.
\newblock Construction of {$G_2$}-instantons via twisted connected sums.
\newblock Oct. 2015, \href{http://arxiv.org/abs/1510.03836}{arXiv:1510.03836}.

\bibitem{MS10}
A.~Moroianu and U.~Semmelmann.
\newblock The {H}ermitian {L}aplace operator on nearly {K}\"ahler manifolds.
\newblock {\em Comm. Math. Phys.} 294(1):251--272, 2010,
  \href{http://dx.doi.org/10.1007/s00220-009-0903-4}{doi:10.1007/s00220-009-0903-4}.

\bibitem{Munoz-Spin7}
V.~Mu{\~n}oz.
\newblock {$\rm Spin(7)$}-instantons, stable bundles and the {B}ogomolov
  inequality for complex 4-tori.
\newblock {\em J. Math. Pures Appl. (9)} 102(1):124--152, 2014,
  \href{http://dx.doi.org/10.1016/j.matpur.2013.11.004}{doi:10.1016/j.matpur.2013.11.004}.

\bibitem{Oliveira14}
G.~Oliveira.
\newblock Monopoles on the {Bryant--Salamon} {$G_2$}-manifolds.
\newblock {\em J. Geom. Phys.} 86:599--632, 2014,
  \href{http://dx.doi.org/10.1016/j.geomphys.2014.10.005}{doi:10.1016/j.geomphys.2014.10.005},
  \href{http://arxiv.org/abs/1310.7392}{arXiv:1310.7392}.

\bibitem{Popov:2009nx}
A.~D. Popov.
\newblock Hermitian {Y}ang--{M}ills equations and pseudo-holomorphic bundles on
  nearly {K}\"ahler and nearly {C}alabi--{Y}au twistor 6-manifolds.
\newblock {\em Nuclear Phys. B} 828(3):594--624, 2010,
  \href{http://dx.doi.org/10.1016/j.nuclphysb.2009.11.011}{doi:10.1016/j.nuclphysb.2009.11.011},
  \href{http://arxiv.org/abs/0907.0106}{arXiv:0907.0106}.

\bibitem{Popov2010}
A.~D. Popov.
\newblock Non-abelian vortices, super {Y}ang--{M}ills theory and
  {S}pin(7)-instantons.
\newblock {\em Lett. Math. Phys.} 92(3):253--268, 2010,
  \href{http://dx.doi.org/10.1007/s11005-010-0379-3}{doi:10.1007/s11005-010-0379-3},
  \href{http://arxiv.org/abs/0908.3055}{arXiv:0908.3055}.

\bibitem{PopovSzabo}
A.~D. Popov and R.~J. Szabo.
\newblock Double quiver gauge theory and nearly k{\"a}hler flux
  compactifications.
\newblock {\em Journal of High Energy Physics} 2012(2):1--50, 2012,
  \href{http://dx.doi.org/10.1007/JHEP02(2012)033}{doi:10.1007/JHEP02(2012)033},
  \href{http://arxiv.org/abs/1009.3208}{arXiv:1009.3208}.

\bibitem{ReyesCarrion98}
R.~Reyes~Carri{\'o}n.
\newblock A generalization of the notion of instanton.
\newblock {\em Differential Geom. Appl.} 8(1):1--20, 1998.

\bibitem{SaEarpWalpuski-G2instantons}
H.~{S\'a Earp} and T.~Walpuski.
\newblock $\mathrm{G}_2$-instantons on twisted connected sums.
\newblock {\em Geom. Topol.} 19(3):1263--1285, 2015,
  \href{http://dx.doi.org/10.2140/gt.2015.19.1263}{doi:10.2140/gt.2015.19.1263},
  \href{http://arxiv.org/abs/1310.7933}{arXiv:1310.7933}.

\bibitem{SaEarp2014-CS}
H.~N. S\'a~Earp.
\newblock Generalised {Chern--Simons} theory and $\mathrm{G}_2$-instantons over
  associative fibrations.
\newblock {\em SIGMA} 10:,083,11pages, Jan. 2014,
  \href{http://arxiv.org/abs/1401.5462}{arXiv:1401.5462}.

\bibitem{SaEarp-G2overACM}
H.~N. {S\'a Earp}.
\newblock {$G_2$}-instantons over asymptotically cylindrical manifolds.
\newblock {\em Geom. Topol.} 19(1):61--111, 2015,
  \href{http://dx.doi.org/10.2140/gt.2015.19.61}{doi:10.2140/gt.2015.19.61},
  \href{http://arxiv.org/abs/1101.0880}{arXiv:1101.0880}.

\bibitem{Tanaka-Spin7}
Y.~Tanaka.
\newblock A construction of {S}pin(7)-instantons.
\newblock {\em Ann. Global Anal. Geom.} 42(4):495--521, 2012,
  \href{http://dx.doi.org/10.1007/s10455-012-9324-2}{doi:10.1007/s10455-012-9324-2},
  \href{http://arxiv.org/abs/1201.3150}{arXiv:1201.3150}.

\bibitem{tanaka-weakcompactness}
Y.~Tanaka.
\newblock A weak compactness theorem of the {D}onaldson--{T}homas instantons on
  compact {K}\"ahler threefolds.
\newblock {\em J. Math. Anal. Appl.} 408(1):27--34, 2013,
  \href{http://dx.doi.org/10.1016/j.jmaa.2013.05.059}{doi:10.1016/j.jmaa.2013.05.059},
  \href{http://arxiv.org/abs/0805.2195}{arXiv:0805.2195}.

\bibitem{Tanaka2014-removal}
Y.~Tanaka.
\newblock A removal singularity theorem of the {D}onaldson--{T}homas instanton
  on compact {K}\"ahler threefolds.
\newblock {\em J. Math. Anal. Appl.} 411(1):422--428, 2014,
  \href{http://dx.doi.org/10.1016/j.jmaa.2013.09.053}{doi:10.1016/j.jmaa.2013.09.053}.

\bibitem{tanaka-spaceDTKahler}
Y.~Tanaka.
\newblock On the moduli space of {Donaldson--Thomas} instantons.
\newblock 2015, \href{http://arxiv.org/abs/0805.2192}{arXiv:0805.2192}.
\newblock to appear in Extracta Mathematicae.

\bibitem{Tao-Tian}
T.~Tao and G.~Tian.
\newblock A singularity removal theorem for {Y}ang--{M}ills fields in higher
  dimensions.
\newblock {\em J. Amer. Math. Soc.} 17(3):557--593 (electronic), 2004,
  \href{http://dx.doi.org/10.1090/S0894-0347-04-00457-6}{doi:10.1090/S0894-0347-04-00457-6}.

\bibitem{Tian00}
G.~Tian.
\newblock Gauge theory and calibrated geometry. {I}.
\newblock {\em Ann. of Math. (2)} 151(1):193--268, 2000,
  \href{http://arxiv.org/abs/math/0010015}{arXiv:math/0010015}.

\bibitem{Walpuski-G2-Kummer}
T.~Walpuski.
\newblock {$\rm G_2$}-instantons on generalised {K}ummer constructions.
\newblock {\em Geom. Topol.} 17(4):2345--2388, 2013,
  \href{http://dx.doi.org/10.2140/gt.2013.17.2345}{doi:10.2140/gt.2013.17.2345},
  \href{http://arxiv.org/abs/1109.6609}{arXiv:1109.6609}.

\bibitem{Ward84}
R.~S. Ward.
\newblock Completely solvable gauge-field equations in dimension greater than
  four.
\newblock {\em Nuclear Phys. B} 236(2):381--396, 1984.

\bibitem{Wolf67}
J.~A. Wolf.
\newblock {\em Spaces of constant curvature}.
\newblock AMS Chelsea Publishing, Providence, RI, sixth edition, 2011,
  pp.~xviii+424.

\bibitem{WolfGray1}
J.~A. Wolf and A.~Gray.
\newblock Homogeneous spaces defined by {L}ie group automorphisms. {I}.
\newblock {\em J. Differential Geometry} 2:77--114, 1968,
  \url{http://projecteuclid.org/euclid.jdg/1214501139}.

\bibitem{WolfGray2}
J.~A. Wolf and A.~Gray.
\newblock Homogeneous spaces defined by {L}ie group automorphisms. {II}.
\newblock {\em J. Differential Geometry} 2:115--159, 1968,
  \url{http://projecteuclid.org/euclid.jdg/1214428252}.

\bibitem{Xu09}
F.~Xu.
\newblock On instantons on nearly {K}\"ahler 6-manifolds.
\newblock {\em Asian J. Math.} 13(4):535--567, 2009,
  \href{http://dx.doi.org/10.4310/AJM.2009.v13.n4.a5}{doi:10.4310/AJM.2009.v13.n4.a5}.

\end{thebibliography}

\end{document}